\setlist[enumerate]{leftmargin=5mm}
\newcommand{\cmark}{{\color{PineGreen}\ding{51}}}%
\newcommand{\xmark}{{\color{BrickRed}\ding{55}}}%
\newcommand{\algname}[1]{{\color{ForestGreen}\small\sf#1}\xspace}
\newcommand{\algnamesmall}[1]{{\color{ForestGreen}\scriptsize\sf#1}\xspace}
\newcommand{\norm}[1]{\left\| #1 \right\|}
\newcommand{\ExpBr}[1]{\mathbb{E}\left[#1\right]}
\newcommand{\cZ}{\mathcal{Z}}
\newcommand{\cJ}{\mathcal{J}}
\newcommand{\sumin}{\sum_{i=1}^n}
\newcommand{\avein}{\frac{1}{n}\sum_{i=1}^n}
\newcommand{\RR}{\mathbb{R}}
\newcommand{\cC}{{\cal C}}
\newcommand{\cO}{{\cal O}}
\newcommand{\bA}{\mathbf{A}}
\newcommand{\bS}{\mathbf{S}}
\newcommand{\del}[1]{}
\let\la=\langle
\let\ra=\rangle
\newcommand{\eqdef}{\stackrel{\text{def}}{=}}
\newcommand{\R}{\mathbb{R}}
\newcommand{\squeeze}{}
\newtheorem{assumption}{Assumption}
\newtheorem{lemma}{Lemma}
\newtheorem{theorem}{Theorem}
\newtheorem{fact}{Fact}
\newtheorem{example}{Example}
\theoremstyle{plain}
\theoremstyle{definition}
\newtheorem{definition}[theorem]{Definition} % assumptions, lemmas, theorems, etc.
\newcommand{\peter}[1]{\todo[inline]{\textbf{Peter: } #1}} 
\newcommand{\cI}{{\cal I}}
\newcommand{\supp}{{\rm supp}}
\newcommand{\topk}[1]{{\color{PineGreen}{\sf Top}{\sf#1}}}
\newcommand{\randk}[1]{{\color{PineGreen}{\sf Rand}{\sf#1}}}
\newcommand{\permk}[1]{{\color{PineGreen}{\sf Perm}{\sf#1}}}
\newcommand{\xct}{{\color{red}c^t}}
\newcommand{\xc}{{\color{red}c}}
\newcommand{\xr}{{\color{blue}r}}
\newcommand{\Lpl}{{L_+}}
\newcommand{\Lplsq}{{L_+^2}}
\newcommand{\Lplt}{{L_+^t}}
\title{Error Feedback Shines when Features are Rare}
\author{%
 Peter Richt\'{a}rik \\
KAUST${}^\dagger$\\
SDAIA-KAUST AI${}^\ddagger$\\
Saudi Arabia \\
  \And
	Elnur Gasanov \\
	KAUST\\
	Saudi Arabia \\
  \And
	Konstantin Burlachenko\\
	KAUST\\
	SDAIA-KAUST AI \\
	Saudi Arabia \\
  % \And
  % Coauthor \\
  % Affiliation \\
  % Address \\
  % \texttt{email} \\
  % \And
  % Coauthor \\
  % Affiliation \\
  % Address \\
  % \texttt{email} \\
}
\begin{document}

\maketitle

\begin{abstract}
We provide the first proof that gradient descent (\algname{GD}) with greedy sparsification ($\topk{K}$)  and error feedback (\algname{EF}) can obtain better communication complexity than vanilla \algname{GD} when solving the distributed  optimization problem $\min_{x\in \R^d} \{f(x)=\frac{1}{n}\sum_{i=1}^n f_i(x)\}$, where $n$ = \# of clients, $d$ = \# of features, and $f_1,\dots,f_n$ are smooth nonconvex functions.  
Despite intensive research since 2014  when \algname{EF} was first proposed by Seide et al., this problem remained open until now. %Surprisingly, this superior performance holds in and is facilitated by a heterogeneous data regime. 
Perhaps surprisingly, we show that \algname{EF} shines in the regime when features are rare, i.e., when each feature is present in the data owned by a small number of clients only. To illustrate our main result, we show that  in order to find a random vector $\hat{x}$ such that $\norm{\nabla f(\hat{x})}^2 \leq \varepsilon$ in expectation, \algname{GD} with the $\topk{1}$ sparsifier and \algname{EF} requires  
$\cO\left( \left(L +   \xr\sqrt{ \frac{\xc}{n} \min \left\{ \frac{\xc}{n}  \max_i L_i^2, \frac{1}{n}\sum_{i=1}^n L_i^2 \right\}}  \right) \frac{1}{\varepsilon} \right) $ bits to be communicated by each worker to the server only, where $L$ is the smoothness constant of $f$, 
$L_i$ is the smoothness constant of $f_i$, $\xc$ is the maximal number of clients owning any feature ($1\leq \xc\leq n$), and $\xr$ is the maximal number of features owned by any client ($1\leq \xr \leq d$). Clearly, the communication complexity improves as $\xc$  decreases (i.e., as features become more rare), and can be much better than the $\cO(\xr L \frac{1}{\varepsilon})$ communication complexity of \algname{GD} in the same regime.

%We recover the prior best result $\cO\left( \left(L +   \sqrt{   \frac{d}{n}\sum_{i=1}^n L_i^2 }  \right) \frac{1}{\varepsilon} \right) $ of Richt\'{a}rik et al (2021) as a special case for the worst case choices of the client-feature sparsity parameters: $\xc=n$ and $\xr=d$.

\end{abstract}

\section{Introduction}

In recent decades, the field of machine learning has undergone significant growth and development, presenting numerous opportunities for innovation and advancement. As a result, breakthroughs have been made in various areas such as computer vision~\citep{krizhevsky2012, resnet, diffusion}, natural language processing~\citep{transformer, gpt}, reinforcement learning~\citep{atari, SuttonBartoRL}, healthcare~\citep{healthcare}, and finance~\citep{dixon2020machine}. The surge in machine learning applications has also stimulated the development of associated optimization research. 

What unique characteristics has modern machine learning introduced to optimization research? i) Firstly, the design of modern models is inherently complex. For example, contemporary convolutional neural networks are constructed from multiple blocks consisting of diverse types of layers that are arranged hierarchically~\citep{deep_learning_review}. Consequently, these models are notably \textit{nonconvex} in nature~\citep{choromanska15}.  ii) Secondly, the quantity of data utilized during model training is so extensive that the use of a single computing device is no longer practical. Therefore, it is necessary to distribute the data across multiple computing resources, which raises the question of how to effectively coordinate \textit{distributed}~\citep{YANG2019278} model training. Another motivation for distributed training is derived from the federated learning framework~\citep{FL_overview}, where data is owned by users who are not willing to share it with each other in a cross-device federated learning scenario. In this case, the centralized algorithm must manage the training of several client devices. iii) Thirdly, modern machine learning models have a tendency to expand in size, often containing millions of parameters. For instance, in a distributed setting, during gradient descent, each device must transmit a dense gradient vector consisting of millions of parameters each round of communication~\citep{li2020acceleration}. This creates an unmanageable burden on the communication network. Hence, there is a need for techniques that can {\em reduce the number of bits transmitted} through communication channels while maintaining the convergence of the algorithm.

The first two points, namely the lack of convexity and the distributed setup, provide the rationale for focusing our investigation on the following optimization problem:

\begin{equation}\label{eq:main_problem}
	\squeeze \min \limits_{x \in \mathbb{R}^d} \left\{f(x) = \frac{1}{n}\sum \limits_{i=1}^n f_i(x)\right\},
\end{equation}

where $f_i: \mathbb{R}^d \rightarrow \mathbb{R}$ is a smooth nonconvex function representing the local loss function on client $i$, and $n \in \mathbb{N}$ is the number of participating clients in the training process, with $d \in \mathbb{N}$ representing the dimension of the model. The third point, which we address by  reducing the communication load via lossy compression of updates, is elaborated in the following section.

\section{Communication Compression and Error Feedback}

\subsection{Reducing communication in distributed learning}

A common approach to solving the optimization problem~\eqref{eq:main_problem} is to utilize Distributed Gradient Descent (\algname{DGD}), which takes the form:
\begin{equation}\label{eq:distributed_gd}
\squeeze	x^{t+1} = x^t -  \frac{\gamma^t}{n}\sum \limits_{i=1}^n\nabla f_i(x^t),
\end{equation}

where $\gamma^t > 0$ denotes the learning rate. The orchestration of this method occurs between a master node (a central node that holds no data and is responsible for aggregation actions) and $n$ clients. Prior to the start of the algorithm, an initial iterate $x^0 \in \RR^d$ and a learning rate $\gamma^0>0$ are selected. At iteration $t$, the master node broadcasts the current iterate $x^t \in \RR^d$ to the clients, each client then computes its gradient $\nabla f_i(x^t)$ and sends it back to the master. Finally, the master node aggregates all the gradients and uses them to perform the gradient descent step~\eqref{eq:distributed_gd}, updating the iterate to $x^{t+1}$. This process is repeated. 

While~\algname{DGD} is known to be an optimal algorithm for finding a stationary point with a minimum number of iterations on smooth nonconvex problems~\citep{nesterov2018lectures}, it poses a significant challenge to the communication network. At every communication round, \algname{DGD} transmits dense gradients of $d$ dimensions to the master. As previously mentioned, this communication load is unacceptable in many practical scenarios. One way to resolve this problem is to apply a contractive compressor to the communicated gradients.
\begin{definition}%[Contractive compressor]
A (possibly randomized) mapping $\cC: \RR^d \rightarrow \RR^d$ is called a contractive compressor if there exists a constant $\alpha \in (0, 1]$, known as the \textit{contraction parameter}, such that 
\begin{equation}
\ExpBr{\|\cC(x) - x\|^2} \leq (1 - \alpha) \|x\|^2, \quad \forall x\in \R^d.
\end{equation}
\end{definition}
An immensely popular contractive compressor is the $\topk{K}$ operator. Given a vector $x \in \RR^d$, this operator retains only the $k$ elements with the largest magnitudes and sets the remaining $d-k$ elements to zeroes. It is well-known that the $\topk{K}$ operator is contractive with $\alpha = \frac{k}{d}$. 

%In practical scenarios, applying this operator allows for sending only $k$ elements through the uplink network instead of all $d$ elements required in the case of \algname{DGD}, leading to $d:K$ speedup in uplink communication time per iteration. 

The next step in reducing the communication load within \algname{DGD},  is to implement workers-to-server  communication compression. This modification of \algname{DGD} is known as  Distributed Compressed Gradient Descent (\algname{DCGD}), performing iterations
\begin{equation}
\squeeze	x^{t+1} = x^t -  \frac{\gamma^t}{n}\sum \limits_{i=1}^n \cC_i^t(\nabla f_i(x^t)),	
\end{equation}
where $\cC_i^t$ is the compressor used by  client $i$ at iteration $t$. 

\algname{DCGD} has been shown to converge to stationary points using certain compressors~\citep{sigma_k, khirirat2018distributed}. However, greedy contractive compressors such as \topk{K}, which are often preferred in practical applications~\citep{szlendak2022permutation}, are not among them, and their practical superiority is not explained by any existing theoretical results. Furthermore, negative results suggest that \algname{DCGD} with greedy contractive compressors such as \topk{K} can experience exponential divergence in the distributed setting~\citep{beznosikov2020biased, Karimireddy_SignSGD}, even on simple convex quadratics. Hence, in order to safely apply contractive compressors in a multi-device communication network, a fix is needed. Fortunately, a fix exists: it is generically known as Error Feedback.

\subsection{Evolution of error feedback}

The concept of Error Feedback (EF), also known as Error Compensation, was initially proposed by~\citet{Seide2014} as a heuristic approach to fix the divergence issues of \algname{DCGD}. The first version of EF, which for the purposes of this paper we shall denote as \algname{EF14}, is structured as follows:
\[\squeeze v_i^t =  \cC_i^t(e_i^t + \gamma^t \nabla f_i(x^t)), \qquad x^{t+1} = x^t - \frac{1}{n}\sum \limits_{i=1}^n v_i^t, \qquad e_i^{t+1} = e_i^t + \gamma^t \nabla f_i(x^t) -  v_i^t.
\]

Here, $\cC_i^t$ represents a contractive compressor utilized by client $i$ at iteration $t$, and  $e_i^t$ is a memory vector that preserves all elements that were not transmitted by client~$i$ in the preceding iterations. In this approach, at iteration $t$, the method sends compressed gradient compensated by the memory vector. The memory vector is then updated and the process is repeated.

The first attempts to establish a theoretical basis for \algname{EF14} focused on the single-node scenario~\citep{Stich-EF-NIPS2018, Alistarh-EF-NIPS2018}, which is, however, detached from the practical utility of the mechanism that is meaningful only in the distributed setting. An analysis in the distributed setting was executed by~\citep{beznosikov2020biased} for strongly convex problems. However, a key limitation of their result was that the (expected) linear rate was only achievable in the homogeneous over-parameterized setting. Subsequent efforts aimed to relax these assumptions and allow for a fast rate even under data heterogeneity, but the \algname{EF14}  framework seem too elusive to  yield vastly improved results. For example, \citet{Koloskova2019DecentralizedDL}  established convergence of \algname{EF14} under the assumption of bounded gradients. The authors demonstrate that the method requires $T = \cO(\varepsilon^{-3/2})$ iterations to attain an  $\varepsilon$-accuracy in terms of the squared gradient norm. However, this rate is worse than the $T = \cO(\varepsilon^{-1})$ rate of vanilla \algname{DGD}, and under stronger assumptions.
\begin{algorithm}
	\begin{algorithmic}[1]
		\STATE {\bfseries Input:} $x^0 \in \RR^d$; $g_1^0\in \R^d_1,\dots,g_n^0 \in \R^d_n$ (as defined in equation~\eqref{eq:R_d_n_definiton}); stepsize $\gamma>0$; sparsification levels $K_1,\dots,K_n \in [d]$; number of iterations $T > 0$
		\STATE {\bfseries Initialize:} $g^0 = \frac{1}{n}\sum_{i=1}^n g_i^0$
		\FOR{$t = 0, 1, 2, \dots, T - 1 $}
		\STATE  $x^{t+1} = x^t - \gamma g^t$		
		\STATE Broadcast  $x^{t+1}$ to the clients
		\FOR{$i = 1, \dots, n$ {\bf on the clients in parallel}} 
		\STATE $g_i^{t+1} = g_i^t + \topk{K_i}(\nabla f_i(x^{t+1}) - g_i^t)$ \label{alg_line:g_update_step}
		\STATE Send $g_i^{t+1}$ to the server
		\ENDFOR 
		\STATE $g^{t+1} = \avein g_i^{t+1}$
		\ENDFOR
		\STATE {\bfseries Output:} $x^T$
	\end{algorithmic}
	\caption{\algname{EF21}: Error Feedback 2021 with the $\topk{K_i}$ compressor \citep{EF21}}
	\label{alg:ef21}
\end{algorithm}

Recently, \citet{EF21} re-engineered the EF technique and proposed their \algname{EF21} method. This algorithm, presented in a specific version with the $\topk{K_i}$ compressors in \Cref{alg:ef21}, achieves the optimal number of iterations of $O(\varepsilon^{-1})$ for smooth nonconvex  problems, without invoking any strong assumptions. This paper represents a breakthrough in realizing the complete potential of EF for distributed learning applications since \algname{EF21}  achieves a better convergence rate than \algname{EF14},  under weaker assumptions, and exhibits better performance in practice. While it may seem that \algname{EF21} constitutes the ultimate culmination of the Error Feedback story, we argue that this is far from true! And this is the starting point of our work. We elaborate on this in the next section.

\newpage
\section{Error Feedback: Discrepancy Between Theory and Practice}

Our work is motivated through {\em three observations} about the unreasonable effectiveness of  \algname{EF21}.
\subsection{Observation 1: In practice, \algname{EF21} can be extremely good!}

Let us start by looking at the practical performance of \algname{EF21} using the  \topk{K} compressor, contrasted with the performance of the current best method for solving \eqref{eq:main_problem} in the smooth nonconvex regime in terms of theoretical communication complexity: the \algname{MARINA} method of \citet{MARINA}.
In \Cref{fig:MARINAvsEF21}, which we adopted from from~\citep{szlendak2022permutation},  \algname{EF21} is compared to two variants of \algname{MARINA}: with the \randk{K} and \permk{K} compressors used by the clients. While \topk{K} is a {\em greedy} mechanism, performed by each client without any regard for what the other clients do, \permk{K} is the exact opposite: it is a {\em collaborative} compression mechanism. 

\begin{figure}[H]
\centering
\includegraphics[width=\textwidth]{./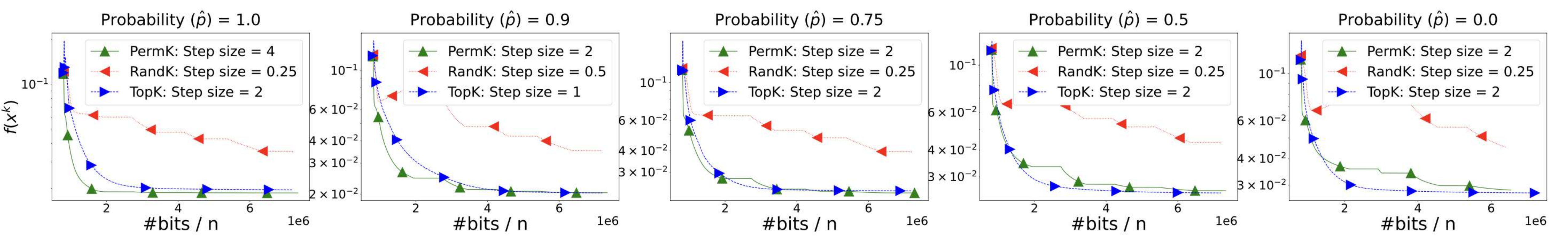}
\caption{Comparison of three algorithms on the encoding learning task for the MNIST dataset~\citep[Figure 2]{szlendak2022permutation}.}
\label{fig:MARINAvsEF21}
\end{figure}

It is clear from \Cref{fig:MARINAvsEF21} that \algname{EF21}+\;\topk{K}  has  similar performance to \algname{MARINA}+\;\permk{K}, and both are much faster than \algname{MARINA} with the ``naive'' \randk{K} compressor.

\subsection{Observation 2: In theory, ~\algname{EF21} is not better than~\algname{DGD}!}

However---and we believe that this is very important---while the theoretical communication complexity of \algname{MARINA} is significantly better than that of \algname{DGD} \citep{MARINA}, the theoretical communication complexity of \algname{EF21} at best matches the complexity of \algname{DGD}!  
%The current state-of-the-art EF algorithm, \algname{EF21}~\citep{EF21}, has a critical drawback in terms of communication complexity, which renders it no better than \algname{DGD}. 
To state this formally, let us revisit the assumptions \citet{EF21} used to perform their  analysis of \algname{EF21}.

\begin{assumption}\label{as:smooth}
	The function $f$ is $L$-smooth, i.e., there exists $L>0$ such that
	\begin{equation}\label{eq:smoothness_def}
		\norm{\nabla f(x) - \nabla f(y)} \leq L \norm{x - y}, \quad \forall x, y \in \R^d.
	\end{equation}
\end{assumption}
\begin{assumption}\label{as:L_i} The functions $f_i$ are $L_i$-smooth,  i.e., there exists $L_i>0$ such that
	\begin{equation}\label{eq:L_i}
		\norm{\nabla f_i(x) - \nabla f_i(y)} \leq L_i \norm{x - y}, \quad \forall x, y \in \R^d.
	\end{equation}
\end{assumption}
\begin{assumption}\label{as:lower_bounded}
	There exists $f^\ast \in \RR$ such that  that $f(x) \geq f^\ast$ for all $x\in\R^d$.
\end{assumption}
The following observation formalized the above claim.

\begin{lemma}\label{lm:alpha_1_the_best}
	Under Assumptions~\ref{as:smooth}, \ref{as:L_i}, and~\ref{as:lower_bounded}, the optimal theoretical communication complexity of \Cref{alg:ef21} obtained by \citet{EF21} is achieved when the parameter $K$ in the $\topk{K}$ compressor equals the dimension $d$. 
\end{lemma}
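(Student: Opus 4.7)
The plan is to take the convergence guarantee of \algname{EF21} as stated in Theorem~1 of \citet{EF21} and treat it as a black box whose per-iteration cost depends on the compressor. That theorem says that with step size $\gamma = 1/(L + \tilde{L}\sqrt{\beta/\theta})$, where $\tilde{L}^2 = \frac{1}{n}\sum_i L_i^2$, $\theta = 1-\sqrt{1-\alpha}$, and $\beta = (1-\alpha)/\theta$, the number of iterations needed to reach $\min_{t<T} \EE\|\nabla f(x^t)\|^2 \leq \varepsilon$ is $T(\alpha) = \cO\!\left(\frac{\Delta_0}{\varepsilon}\bigl(L + \tilde{L}\sqrt{\beta/\theta}\bigr)\right)$, with $\Delta_0 = f(x^0) - f^\ast$. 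Since the update rule on line~\ref{alg_line:g_update_step} adds only a $K$-sparse correction to $g_i^t$, each worker's per-iteration upload is proportional to $K$ coordinates (each of constant bit-size), so the total per-worker communication is $C(K) \eqdef K\cdot T(K/d)$. The goal is to minimize $C(K)$ over $K \in \{1,\dots,d\}$.

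The key algebraic step is to simplify the product $K\sqrt{\beta/\theta}$ when $\alpha = K/d$. Using the identity $(1-\sqrt{1-\alpha})(1+\sqrt{1-\alpha}) = \alpha$, one gets $\alpha/\theta = 1 + \sqrt{1-\alpha}$, and therefore
\[
K\sqrt{\beta/\theta} \;=\; \alpha d \cdot \frac{\sqrt{1-\alpha}}{\theta} \;=\; d\sqrt{1-\alpha}\bigl(1+\sqrt{1-\alpha}\bigr) \;=\; d\bigl(\sqrt{1-K/d} + 1 - K/d\bigr).
\]
Substituting this into $C(K)$ gives, up to the $2\Delta_0/\varepsilon$ prefactor,
\[
C(K) \;\propto\; K(L - \tilde{L}) + d\tilde{L}\sqrt{1 - K/d} + d\tilde{L}.
\]

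It then suffices to show that this expression is strictly decreasing in $K$ on $[0,d)$, so that the minimum over admissible $K$ is attained at the right endpoint $K = d$ (which makes $\topk{K}$ the identity and reduces \Cref{alg:ef21} to \algname{DGD}). Differentiating yields
\[
C'(K) \;\propto\; (L - \tilde{L}) - \frac{\tilde{L}}{2\sqrt{1-K/d}}.
\]
The standard smoothness comparison $L \leq \frac{1}{n}\sum_i L_i \leq \sqrt{\frac{1}{n}\sum_i L_i^2} = \tilde{L}$, the first inequality by the triangle inequality applied to $\nabla f = \frac{1}{n}\sum_i \nabla f_i$ and the second by Cauchy--Schwarz, makes the first summand nonpositive and the second strictly negative, so $C'(K) < 0$ on $(0,d)$. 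Consequently $C$ is strictly decreasing and $K = d$ is the minimizer, giving $C(d) \propto dL$ and matching the communication complexity of vanilla \algname{DGD}. The only nontrivial step is the algebraic collapse of $\sqrt{\beta/\theta}$; once that is written in the form above, the sign of the derivative and hence the conclusion follow immediately from $L\le \tilde L$.
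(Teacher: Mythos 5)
Your proposal is correct and follows essentially the same route as the paper: both minimize the per-worker communication $K\cdot T(K/d)$ built from the \algname{EF21} rate $L+\tilde{L}\sqrt{\beta/\theta}$ and show the derivative is negative using $L\le\frac{1}{n}\sum_i L_i\le\tilde{L}$, so the minimum sits at $K=d$. Your only deviation is cosmetic --- you collapse $K\sqrt{\beta/\theta}$ to $d(\sqrt{1-K/d}+1-K/d)$ before differentiating, which makes the sign computation a bit cleaner than the paper's direct differentiation of the product.
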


So, to summarize,  in theory, \algname{EF21} at its best reduces to vanilla \algname{DGD}, but in practice,  \algname{EF21}  can be orders of magnitude better than \algname{DGD}. Why is error feedback so unreasonably effective? Despite about a decade of research on the Error Feedback mechanism since the pioneering work of \citet{Seide2014}, and steady advances in the field and our theoretical understanding, there is clearly still much to be explored regarding the communication complexity advantages it offers.

\subsection{Observation 3: \algname{EF21} has absolutely no scaling in the homogeneous data regime!}

Our final observation is that  \algname{EF21} performs remarkably poorly in the fully data homogeneous regime, i.e., when $f_1 = f_2 = \dots = f_n =  f$. For simplicity, assume that all initial gradient estimators $g_1^0,\dots,g_n^0$ are identical. In this case, running \algname{EF21} with the $\topk{K}$ compressor in the distributed ($n>1$) setting is equivalent to running it in the single node ($n=1$) setting! That is, \algname{EF21}  has no parallel scaling at all!
Specifically, if at iteration $t$ we have $g_i^t\equiv g'$ for all $i$, then $$g_i^{t+1} = g^t_i + \topk{K}(\nabla f_i(x^t) - g_i^t) = g' + \topk{K}(\nabla f(x^t) - g')$$ for all $i$, meaning that all $g_i^{t+1}$ coincide, again. Note that the average of these estimators does not depend on $n$ and, as a result, \algname{EF21} generates the same chain of iterates $x^t$ for any value of $n$.

\iffalse
Error Feedback (\algname{EF}), first proposed by \citet{Seide2014}, is an immensely popular convergence stabilization technique for distributed gradient-type methods enhanced with communication compression via contractive operators such as random and greedy quantization and sparsification \citep{Alistarh-EF-NIPS2018,beznosikov2020biased}.

In practice, gradient methods enjoy a significantly smaller communication footprint when used together with communication compression and \algname{EF}. However,  there are no theoretical results explaining why this might be the case for greedy contractive compression operators such as $\topk{K}$. Indeed, prior to the recent work of \citet{EF21} who designed a new \algname{EF} mechanism called \algname{EF21}, all theoretical results for \algname{EF} with $\topk{K}$ predicted worse communication complexity than that of vanilla gradient descent. Although \algname{EF21} matches the communication complexity of gradient descent, its empirical communication complexity is vastly superior to that of gradient descent, and this phenomenon remains unexplained.

\subsection{The problem}

Consider solving the optimization problem 
\begin{equation}\label{eq:main} \min_{x\in \R^d} \left\{f(x)\eqdef \frac{1}{n}\sum_{i=1}^n f_i(x)\right\}, \end{equation}
where $n$ is the number of clients, vector $x\in \R^d$ represents the $d$ features/parameters/weights of a model that need to be learned, and $f_i:\R^d \to \R$ is the loss of model parameterized by $x$ on the data stored at client $i$.
\fi
\subsection{Summary of contributions}

The above three observations point to deep issues in our theoretical understanding of EF in general, and its most recent and best performing incarnation \algname{EF21} in particular. However, our last observation also offers a hint at a possible resolution. Indeed, since \algname{EF21} cannot possibly perform well on data homogeneous problems, we conjecture that its practical superiority is due to a certain ``favorable'' type of data heterogeneity.  This is perhaps counter-intuitive because heterogeneous problems are typically considered to be much harder than homogeneous problems~\citep{FL_overview}.  Specifically, our contributions are: 

\phantom{XX} {\bf a) Sparsity rules \& the first time error feedback beats gradient descent.}  We present a refined theoretical analysis of the \algname{EF21} algorithm (with the greedy $\topk{K}$ compressor) demonstrating that, under certain conditions on the {\em sparsity parameters} $\xc$ and $\xr$ associated with the problem (defined in~\eqref{eq:sigma}), \algname{EF21} can surpass \algname{DGD} in terms of \textit{theoretical communication complexity}.  This is the first result in the vast literature on error feedback of this type. 
In particular, in~\Cref{thm:convergence_separate}, we provide a stronger convergence rate for \algname{EF21} in terms of its communication complexity. For \algname{EF21} with the $\topk{1}$ compressor,  for example, we establish an $\cO((L + \xr \sqrt{\frac{\xc}{n}} \Lpl) \frac{1}{\varepsilon})$ communication complexity, where $\Lpl$ is defined in~\Cref{as:L+}. In the regime when $\sqrt{\frac{\xc}{n}} \Lpl \leq L$, which holds when either $\xc$ is small or $n$ is large, \algname{EF21} is   better than \algname{DGD}.

\phantom{XX}  {\bf b) Experimental validation.} We conduct rigorous toy experiments on linear regression functions to validate our theoretical findings. In one of our experiments, we redefine $\xc$ as an adaptive parameter $\xct$ satisfying the inequality $\norm{g^t -\nabla f(x^t)}^2 \leq \frac{\xct}{n} G^t$ presented in~\Cref{lem:98y89fhd_8fd}. We also introduce a heuristically defined adaptive stepsize $\gamma(\xct)$, which demonstrates good performance in a logistic regression experiment on non-sparse data.

We believe that this paper represents a milestone in advancing our understanding of Error Feedback algorithms, and lays the groundwork for further advancements in the field.

%Better than \algname{GD} if any of the following equivalent conditions holds: $$ \sqrt{\frac{\xc}{n}} \Lpl \leq L \quad \Leftrightarrow \quad \Lpl \leq \sqrt{\frac{n}{\xc}} L \quad \Leftrightarrow \quad \xc \Lplsq \leq n L^2 \quad \Leftrightarrow \quad \xc \leq n \frac{L^2}{ \Lplsq}.$$
%Since $\frac{\norm{\nabla f(x)-\nabla f(y)}^2}{\norm{x-y}^2} \leq L^2$ and $\frac{\norm{\nabla f(x)-\nabla f(y)}^2}{\norm{x-y}^2} \leq M^2$}
%
%\peter{$\xc \Lplsq \leq \xc L_{\rm qm}^2 =\xc n  \frac{ L_{\rm qm}^2}{n} =\xc n^2 \frac{1}{n}\sum_{i=1}^n \frac{L_i^2}{n^2} \leq \xc n^2 L^2  $ (BAD)}
%
%\peter{$\xc \Lplsq \leq\xc \frac{\xc}{n}L_{\max}^2 = \xc^2 n\frac{L_{\max}^2}{n^2} \leq  \xc^2 n L^2$}

\begin{table}[t]
\caption{Communication complexity of \algname{DGD} with the $\topk{1}$ compressor and Error Feedback (either \algname{EF14} due to \citet{Seide2014}, or \algname{EF21} due to \citet{EF21}). $L$ = smoothness constant of $f$;  $L_i$ = smoothness constant of $f_i$; $L_{\max} = \max_i L_i$; $\tilde{L}^2 = \frac{1}{n}\sum_{i=1}^n L_i^2$; $\Lpl$ is the smoothness constant defined in Assumption~\ref{as:L+}. Note that $\Lpl \leq \tilde{L}$ and $L \leq \tilde{L}$.}
\label{tbl:main}  		
\begin{center}	
\begin{threeparttable}		
 \scriptsize
\begin{tabular}{ccccccc}
			%    \hline 
			\begin{tabular}{c} \bf Base \\ \bf method \end{tabular} & \bf Compressor & \begin{tabular}{c} \bf Error feedback \\ \bf mechanism \end{tabular} & \bf \begin{tabular}{c} \bf Communication \\ \bf complexity \end{tabular} &  \bf \begin{tabular}{c} Better  \\ than \algnamesmall{DGD}?\end{tabular}  \\
%%%%%%%%%%%%%%%%%%%%%%%%%%%		
			\hline 
			\algnamesmall{DGD}& --- & --- &  $\cO(\xr L \varepsilon^{-1})$ & =  \\					
%%%%%%%%%%%%%%%%%%%%%%%%%%%		
			\hline 
			\algnamesmall{DGD}& $\topk{1}$ & --- &  diverges\tnote{(i)}  & \xmark \\			
%%%%%%%%%%%%%%%%%%%%%%%%%%%		
			\hline 
			\algnamesmall{DGD} & $\topk{1}$ &\algnamesmall{EF14} \citep{Seide2014}&  $\cO\left(\varepsilon^{-3/2}\right)$~\tnote{(iii)} & \xmark \\			
%%%%%%%%%%%%%%%%%%%%%%%%%%%		
			\hline 
			\algnamesmall{DGD} & $\topk{1}$ & \algnamesmall{EF21} \citep{EF21}&  $\cO\left( \left(L + \xr \tilde{L} \right) \varepsilon^{-1}\right)$~\tnote{(iv)}  & \xmark \\
%%%%%%%%%%%%%%%%%%%%%%%%%%%	
			\hline 
			\rowcolor{gray!20}
			\algnamesmall{DGD} & $\topk{1}$ & \algnamesmall{EF21} \citep{EF21}& $\cO\left( \left(L +   \xr \sqrt{\frac{\xc}{n}} \Lpl  \right) \varepsilon^{-1}\right)$~\tnote{(v)} & \cmark\tnote{(ii)} \\
%%%%%%%%%%%%%%%%%%%%%%%%%%%		
			\hline 
			\rowcolor{gray!20}
			\algnamesmall{DGD} & $\topk{1}$ & \algnamesmall{EF21} \citep{EF21} & $\cO\left( \left(L +   \xr \sqrt{\frac{\xc}{n} \min \left\{ \frac{\xc}{n} L_{\max}^2, \tilde{L}^2 \right\}}  \right) \varepsilon^{-1} \right)$~\tnote{(v)} & \cmark\tnote{(ii)} \\
%%%%%%%%%%%%%%%%%%%%%%%%%%%					
			\hline        
		\end{tabular}   

\begin{tablenotes}
\item[(i)] See \citet{beznosikov2020biased}; 
\item[(ii)] If $\sqrt{\frac{\xc}{n}} \Lpl \leq L$;  
\item[(iii)] Proved by \citet{Koloskova2019DecentralizedDL}under bounded gradient assumption; 
\item[(iv)] Proved by \citet{EF21}; 
\item[(v)] {\bf New results proved in this paper.}
\end{tablenotes}		
\end{threeparttable}	
\end{center}		   
\end{table}

\section{Rare Features} \label{sec:sparsity}
%%%%%%%%%%%%%%%%%%%%
%%%%%%%%%%%%%%%%%%%%
%%%%%%%%%%%%%%%%%%%%
We begin our narrative by delineating the key features of the sparsity pattern in~$f_i$. Let us define 
$$
\cZ \eqdef \left\{(i, j) \subseteq [n]\times [d] \ | \ [\nabla f_i(x)]_j = 0 \ \forall x \in \RR^d \right\},
$$or, equivalently, $\cZ \subseteq [n]\times [d]$ represents the set of pairs $(i,j)$, where $f_i(x)$ does not depend on $x_j$.  For instance, if all functions depend on all variables, then $\cZ = \emptyset$. Further, we define:
\begin{equation}\label{eq:cI_cJ_defs}
\cI_j\eqdef \{ i \in [n] \;:\; (i,j) \notin \cZ\}, \qquad \cJ_i\eqdef \{ j \in [d] \;:\; (i,j) \notin \cZ\}, 
\end{equation}
or, informally, $\cI_j$ denotes the set of local loss functions in which the variable $x_j$ is active, and $\cJ_i$ denotes the set of active variables in the function $f_i$. It is reasonable to assume that $1\leq |\cI_j|$ and $1\leq |\cJ_i|$ for all $j\in [d]$ and $i\in [n]$. Otherwise, if $\cI_{j'} = 0$, the specific variable $j'$ can be safely ignored in the equation~\eqref{eq:main_problem}. Similarly, if $\cI_{i'} = 0$, we can safely exclude the client $i'$ from consideration as they play no role in~\eqref{eq:main_problem}. Since the union of sets $\cI_j$ represents the set of all active variables, which we can express as $[n] \times [d] \setminus \cZ$, we write $\sum_{j=1}^d |\cI_j| = nd - |\cZ|$. Similarly, $\sum_{i=1}^n |\cJ_i| = nd - |\cZ|$. Hence, we have
\begin{equation}\label{eq:average_spartsity_sets_estimates}
	\squeeze
	\frac{1}{n}\frac{1}{d}\sum \limits_{j=1}^d |\cI_j| = \frac{1}{d} \frac{1}{n}\sum \limits_{i=1}^n | \cJ_i | = 1 - \frac{|\cZ|}{nd}.
\end{equation}
We now define two key sparsity parameters as follows:
\begin{equation}\label{eq:sigma}\xc \eqdef \max_{j\in [d]} |\cI_j|, \qquad \xr\eqdef \max_{i\in [n]} |\cJ_i|.\end{equation}
The parameter $\xc$ represents the maximum number of clients possessing any variable, while $\xr$ denotes the maximum number of variables possessed by any client. Therefore, $1\leq \xc \leq n$ and $1\leq \xr \leq d$. Since both $\xc$ and $\xr$ represent maximum values in their sets, they are both lower bounded by average values over those sets. The following inequalities result from both observations:
\[\squeeze 1 \geq \frac{\xc}{n} \geq \frac{1}{n}\frac{1}{d}\sum \limits_{j=1}^d | \cI_j | \overset{\eqref{eq:average_spartsity_sets_estimates}}{=} 1 - \frac{|\cZ|}{nd}, \qquad 1 \geq \frac{\xr}{d} \geq  \frac{1}{d} \frac{1}{n}\sum \limits_{i=1}^n | \cJ_i | \overset{\eqref{eq:average_spartsity_sets_estimates}}{=} 1 - \frac{|\cZ|}{nd}.\]

%Let $\cS_i\eqdef \{j\in [d] \;:\; (i,j) \notin \cZ\}$ be the support of function $f_i$.

An illustrative instance of loss functions having small $\xc$ and $\xr$ is represented by generalized linear models with sparse data.
\begin{example}[Linear models with sparse data]\label{ex:linear_models} Let $f_i(x) = \ell_i (a_i^\top x)$, $i=1,2,\dots,n$, where $\ell_i:\R\to \R$ are loss functions, and the vectors $a_1,\dots,a_n\in \R^d$ are sparse. Then \[\cZ = \{ (i,j) \in [n] \times [d] \;:\; \ell_i'(a_i^\top x) a_{ij} = 0, \; \forall x\in \R^d\} \supseteq  \{ (i,j) \in [n] \times [d] \;:\; a_{ij} = 0\} \eqdef  \cZ'.
\]
In this case, $\xc \leq \max_j |\{ i \;:\; (i,j) \neq \cZ'\}|  =  \max_j |\{ i \;:\; a_{ij} \neq 0\}|$.
\end{example}

We conclude this section by introducing notation that is essential for our subsequent results:\begin{equation}\label{eq:R_d_n_definiton}
\squeeze \R^d_i \eqdef \{u = (u_1,\dots,u_d)\in \R^d \;:\; u_j = 0 \text{ whenever } (i,j) \in \cZ \}.
\end{equation}
Note that for any $x \in \R^d$, the gradient $\nabla f_i(x)$ belongs to $\R^d_i$.

%%%%%%%%%%%%%%%%%%%%
%%%%%%%%%%%%%%%%%%%%
%%%%%%%%%%%%%%%%%%%%
\section{Theory} \label{sec:theory}
%%%%%%%%%%%%%%%%%%%%
%%%%%%%%%%%%%%%%%%%%
%%%%%%%%%%%%%%%%%%%%

In this section, we present fundamental insights into how the convergence of \algname{EF21} is affected by $\xc$~and~$\xr$. To accomplish this, we revisit all the crucial elements of the original analysis of \algname{EF21}~\citep{EF21} and enhance it.

\subsection{Average smoothness}

The convergence rate of~\algname{EF21} is directly affected by~\Cref{as:L_i} only when the analysis estimates the average smoothness. Specifically, the original analysis uses the inequality
$$
\squeeze \frac{1}{n} \sum \limits_{i=1}^n  \| \nabla f_i(x) - \nabla f_i(y)\|^2 \overset{\eqref{eq:L_i}}{\leq} \frac{1}{n} \sum \limits_{i=1}^n L_i^2 \| x - y\|^2 = \tilde{L}^2 \|x - y\|^2,
$$
where $\tilde{L}^2 \eqdef \avein L_i^2$~\citep{EF21}. Given that this is the only place in the analysis where the local smoothness constants $L_i$ are relevant, a more intelligent way to analyze \Cref{alg:ef21} is to replace this assumption with a less restrictive one presented as follows.

\begin{assumption}\label{as:L+}
There exists a constant $\Lpl\geq 0$ such that
\begin{equation}\label{eq:L+}
\squeeze \frac{1}{n}\sum \limits_{i=1}^n \norm{\nabla f_i(x) - \nabla f_i(y)}^2 \leq \Lplsq \norm{x - y}^2, \quad \forall x, y \in \R^d.
\end{equation}
\end{assumption}
Based on the aforementioned observation, it follows that $\Lplsq$ is bounded above by $\tilde{L}^2$. However, we can obtain a more refined upper bound on $\Lplsq$ by leveraging the concept of sparsity introduced earlier. Concretely, we present the following lemma which provides a tighter bound on $\Lpl$.

% Note that the above assumption holds if we are ready to assume that each $f_i$ is $L_i$-smooth. Since this is a standard assumption, \eqref{eq:L+} is best viewed as an inequality defining the quantity $\Lpl$ rather than as a an assumption restricting the class of functions we consider. 

\begin{lemma}\label{lem:M-bound-via-c}
If Assumption~\ref{as:L_i} holds, then Assumption~\ref{as:L+} holds with
\begin{equation}\label{eq:L_+_upper_bound}
\squeeze \Lpl \leq\sqrt{\frac{\max_j \left\{\sum_{i: (i,j)\notin \cZ} L_i^2 \right\}}{n} } \leq   \min \left\{ \sqrt{\frac{\xc  \max_i L_i^2}{n}}, \tilde{L} \right\}.
\end{equation}
\end{lemma}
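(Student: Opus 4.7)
The plan is to exploit the fact that $\nabla f_i$ is supported on coordinates in $\cJ_i$ (i.e., lives in $\R^d_i$ as defined in \eqref{eq:R_d_n_definiton}), so that the global $L_i$-smoothness estimate can be tightened to only ``see'' differences on the active coordinates, and then swap the order of summation over $i$ and $j$ to expose the column sparsity parameter $\xc$.

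First, I would sharpen Assumption~\ref{as:L_i} using rare features. Fix $x, y \in \R^d$ and define $y'\in\R^d$ by $y'_j = y_j$ if $j \in \cJ_i$ and $y'_j = x_j$ otherwise. Since $f_i$ does not depend on coordinates outside $\cJ_i$, we have $\nabla f_i(y') = \nabla f_i(y)$, and therefore
\begin{equation*}
\|\nabla f_i(x)-\nabla f_i(y)\|^2 = \|\nabla f_i(x)-\nabla f_i(y')\|^2 \le L_i^2\|x-y'\|^2 = L_i^2 \sum_{j\in\cJ_i}(x_j-y_j)^2 = L_i^2 \sum_{j=1}^d \mathbf{1}_{(i,j)\notin\cZ}(x_j-y_j)^2.
\end{equation*}

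Second, I would average over $i$ and swap the two sums:
\begin{equation*}
\frac{1}{n}\sum_{i=1}^n\|\nabla f_i(x)-\nabla f_i(y)\|^2 \le \frac{1}{n}\sum_{j=1}^d (x_j-y_j)^2 \sum_{i:(i,j)\notin\cZ} L_i^2 \le \frac{\max_j\{\sum_{i:(i,j)\notin\cZ}L_i^2\}}{n}\,\|x-y\|^2,
\end{equation*}
which yields Assumption~\ref{as:L+} with the first inequality in \eqref{eq:L_+_upper_bound}.

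Third, I would derive the two bounds on the right-hand side. For any $j \in [d]$, bounding each $L_i^2$ by $\max_i L_i^2$ and using $|\cI_j|\le \xc$ gives $\sum_{i:(i,j)\notin\cZ}L_i^2 \le |\cI_j|\max_i L_i^2 \le \xc \max_i L_i^2$; alternatively, extending the sum over all $i\in[n]$ gives $\sum_{i:(i,j)\notin\cZ}L_i^2 \le \sum_{i=1}^n L_i^2 = n\tilde L^2$. Taking the minimum of the two, dividing by $n$, and taking a square root produces the second inequality in \eqref{eq:L_+_upper_bound}.

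There is no real obstacle; the only subtle step is the first one, where one must justify that the global Lipschitz bound $L_i\|x-y\|$ can be replaced by $L_i\,\|x-y\|_{\cJ_i}$. The ``auxiliary point'' $y'$ trick handles this cleanly using only the hypothesis that $f_i$ is independent of coordinates $j$ with $(i,j)\in\cZ$ (implicit in the definition of $\cZ$). Everything else is bookkeeping.
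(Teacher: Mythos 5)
Your proof is correct and follows essentially the same route as the paper: a localization of the $L_i$-smoothness bound to the active coordinates $\cJ_i$ via an auxiliary point (the paper zeroes out the inactive coordinates of both $x$ and $y$ in its Lemma~\ref{lem:L_i-implication}, while you set the inactive coordinates of $y$ equal to those of $x$ --- a cosmetic difference), followed by the same swap of summation order and the same two estimates of $\max_j\sum_{i:(i,j)\notin\cZ}L_i^2$.
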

Let us introduce $\Lpl(\cZ)\eqdef \sqrt{\frac{\max_j \left\{\sum_{i: (i,j)\notin \cZ} L_i^2 \right\}}{n} }$. According to~\Cref{lem:M-bound-via-c}, $\Lpl \leq \Lpl(\cZ)$. Moreover, if $\cZ' \supset \cZ$, all other factors being equal, then the resulting sparsity level is higher and thus $\Lpl(\cZ') \leq \Lpl(\cZ)$. Therefore, we can infer that a higher degree of sparsity leads to a smaller value of $\Lpl$.

\subsection{Contraction on $\R^d_i$}

%For $0\neq x\in \R^d$, let $s(x)\eqdef |\supp(x)|$ and $P(x)$ be the vector in $\R^{s(x)}$ defined as follows: $P(x)_j$ is equal to the $j$th nonzero entry of $x$. Further, let $Q_x$ be the mapping for which $Q_x(P(x))=x$.
%
%\begin{example} Let $d=5$ and $x=(7,-2,0,0,4)$. Then $\supp(x) = \{1,2,5\}$, $s(x)=3$ and $P(x) = (7,-2,4) \in \R^3$.
%\end{example}

Another important insight concerns the contraction parameter of the $\topk{K}$ compressor.
\begin{lemma}\label{lem:contraction_on_R^d_i} Consider problem~\eqref{eq:main_problem} and~\Cref{alg:ef21}. Then, for all $i\in [n]$ and $x\in \R^d_i$, we have
\begin{equation}\label{eq:08y09fdd}\squeeze \norm{\topk{K_i}(x)-x}^2 \leq \left(1-\frac{\min\{K_i,|\cJ_i|\}}{|\cJ_i|}\right)\norm{x}^2.\end{equation}
\end{lemma}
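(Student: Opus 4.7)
The plan is to leverage the fact that $x \in \R^d_i$ means the support of $x$ is contained in $\cJ_i$, so $x$ has at most $|\cJ_i|$ nonzero coordinates. Hence $\topk{K_i}$ is effectively operating on an at-most-$|\cJ_i|$-dimensional vector, which should give a stronger contraction than the generic $1 - K_i/d$ bound.

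I would split into two cases according to whether $K_i \geq |\cJ_i|$ or $K_i < |\cJ_i|$. In the first case, $\min\{K_i,|\cJ_i|\}/|\cJ_i| = 1$, so the right-hand side is $0$; on the other hand, $\topk{K_i}$ retains at least $|\cJ_i|$ coordinates, and since $x$ has at most $|\cJ_i|$ nonzero coordinates, we get $\topk{K_i}(x) = x$ and the left-hand side vanishes. This case is essentially definitional.

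For the substantive case $K_i < |\cJ_i|$, set $m = |\cJ_i|$ and let $|x_{j_1}| \geq |x_{j_2}| \geq \cdots \geq |x_{j_m}|$ be the magnitudes of the (at most) $m$ nonzero entries of $x$, with the remaining $d-m$ entries being zero. By definition of $\topk{K_i}$,
\begin{equation*}
\norm{\topk{K_i}(x) - x}^2 = \sum_{k=K_i+1}^{m} x_{j_k}^2 = \norm{x}^2 - \sum_{k=1}^{K_i} x_{j_k}^2.
\end{equation*}
The core step is the averaging inequality: the sum of the top $K_i$ squared magnitudes among $m$ nonnegative numbers is at least $K_i/m$ of the total sum, i.e., $\sum_{k=1}^{K_i} x_{j_k}^2 \geq \frac{K_i}{m} \sum_{k=1}^{m} x_{j_k}^2 = \frac{K_i}{m}\norm{x}^2$. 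This follows because each of the top $K_i$ entries is at least as large as the average of all $m$ entries. Plugging in gives $\norm{\topk{K_i}(x)-x}^2 \leq (1 - K_i/m)\norm{x}^2$, which is exactly the claimed bound since $\min\{K_i,|\cJ_i|\}/|\cJ_i| = K_i/m$ in this regime.

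There is no real obstacle here; the only subtlety is noticing that the standard $\topk{K}$ contraction proof (which yields $1 - K_i/d$) improves to $1 - K_i/|\cJ_i|$ when one restricts attention to vectors supported on $\cJ_i$, because the ambient dimension $d$ is replaced by the effective dimension $|\cJ_i|$ in the averaging argument. This sparsity-aware refinement of the contraction parameter is what will later combine with Lemma~\ref{lem:M-bound-via-c} to produce the improved communication complexity.
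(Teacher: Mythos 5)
Your proof is correct and follows essentially the same route as the paper's: reduce to the case $K_i < |\cJ_i|$, write the compression error as the sum of the smallest squared entries of the (at most $|\cJ_i|$-sparse) vector, and apply the averaging inequality with the effective dimension $|\cJ_i|$ in place of $d$; the paper merely works with $s = |\supp(x)| \leq |\cJ_i|$ and relaxes to $|\cJ_i|$ at the end. One small nitpick: your stated justification for the averaging step ("each of the top $K_i$ entries is at least as large as the average of all $m$ entries") is false in general (e.g., $m=3$, $K_i=2$, squared magnitudes $10,1,1$), though the inequality $\sum_{k=1}^{K_i} x_{j_k}^2 \geq \frac{K_i}{m}\sum_{k=1}^{m} x_{j_k}^2$ itself is true — the correct one-liner is that the \emph{average} of the top $K_i$ entries dominates the average of all $m$, or equivalently (as the paper phrases it) that the average of all $m$ dominates the average of the bottom $m-K_i$.
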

The intuition behind the lemma is that if the active dimension space $|\cJ_i|$ of the function $f_i$ is less than $d$, then the $\topk{K_i}$ compressor is more efficient. For example, if $K_i = |\cJ_i|$, then the contractive compressor returns all non-zero components, resulting in the corresponding contraction parameter $\alpha_i = 1$. The lemma implies that if $K_i \equiv K$ for all $i \in [n]$, then the worst contraction~parameter~is~$\nicefrac{K}{r}$.

\subsection{Interpolating between orthogonality and parallelism}
The sparsity parameter $\xc$ has an important role in interpolating between orthogonal and parallel vectors, as demonstrated in the following lemma.
\begin{lemma}\label{lem:c_bound} If $u_1\in \R^d_1,\dots,u_n\in \R^d_n$, then
$$ \norm{\sum \limits_{i=1}^n u_i}^2 \leq  \xc
 \sum \limits_{i=1}^n \norm{u_i}^2.$$
\end{lemma}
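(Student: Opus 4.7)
The plan is to prove the inequality by expanding $\|\sum_i u_i\|^2$ coordinate-wise and exploiting the key structural fact that, for each fixed coordinate $j$, only the vectors $u_i$ with $i \in \cI_j$ contribute.

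First I would write
\[
\Bigl\|\sum_{i=1}^n u_i\Bigr\|^2 = \sum_{j=1}^d \Bigl(\sum_{i=1}^n (u_i)_j\Bigr)^2.
\]
By the definition of $\R^d_i$ in~\eqref{eq:R_d_n_definiton}, we have $(u_i)_j = 0$ whenever $(i,j) \in \cZ$, i.e., whenever $i \notin \cI_j$. Hence the inner sum actually ranges only over $i \in \cI_j$:
\[
\sum_{i=1}^n (u_i)_j = \sum_{i \in \cI_j} (u_i)_j.
\]

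Next I would apply the Cauchy--Schwarz inequality (equivalently, the ``$\|\cdot\|_1$--$\|\cdot\|_2$'' inequality on a set of size $|\cI_j|$) to get
\[
\Bigl(\sum_{i \in \cI_j} (u_i)_j\Bigr)^2 \leq |\cI_j| \sum_{i \in \cI_j} (u_i)_j^2 \leq \xc \sum_{i=1}^n (u_i)_j^2,
\]
where the last step uses $|\cI_j| \leq \xc$ from the definition~\eqref{eq:sigma} together with the observation that extending the sum from $i \in \cI_j$ to $i \in [n]$ adds only zero terms.

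Finally, I would sum over $j \in [d]$ and swap the order of summation:
\[
\Bigl\|\sum_{i=1}^n u_i\Bigr\|^2 \leq \xc \sum_{j=1}^d \sum_{i=1}^n (u_i)_j^2 = \xc \sum_{i=1}^n \sum_{j=1}^d (u_i)_j^2 = \xc \sum_{i=1}^n \|u_i\|^2,
\]
which is the claimed bound. There is no real obstacle: the only subtle point is to recognize that the sparsity structure of the spaces $\R^d_i$ reduces the effective number of terms in the inner sum to at most $\xc$, so that Cauchy--Schwarz yields a factor of $\xc$ rather than the trivial $n$.
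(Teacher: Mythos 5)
Your proof is correct and follows essentially the same route as the paper: the paper's Lemma~\ref{lem:87t87fdu8df} performs the same coordinate-wise expansion, bounds each squared inner sum by $|S_j|$ times the sum of squares (via Jensen's inequality, which is the same $(\sum a_i)^2 \le m \sum a_i^2$ bound you invoke as Cauchy--Schwarz), and then Lemma~\ref{lem:c_bound} observes $S_j \subseteq \cI_j$ so that $\max_j |S_j| \le \xc$. The only cosmetic difference is that you work directly with $\cI_j$ rather than the possibly smaller support sets $S_j$, which changes nothing in the final bound.
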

If $\xc=1$, then each client $i$ owns a unique set of variables, and hence the scalar products $\la u_i, u_j \ra$ are zero for $i \neq j$, since non-zero elements of one vector are multiplied by zeros of another in the product sum. Note that for orthogonal vectors, Lemma~\ref{lem:c_bound}  becomes an equality with $\xc=1$. When $\xc = n$, Lemma~\ref{lem:c_bound} reduces to Young's inequality, which states that for any $u_i \in \RR^d$, $\|\sumin u_i \|^2~\leq n\sum \|u_i\|^2$. This inequality is an equality when all vectors are parallel and of the same length.

In the context of \algname{EF21} theory, \Cref{lem:c_bound} provides another enhancement. To present it, we introduce the following quantity representing the error in our estimation of the gradients:
\begin{equation} \label{eq:grad_variation_def}
\squeeze G^t \eqdef \frac{1}{n}\sum \limits_{i=1}^n G_i^t,	\qquad G_i^t \eqdef \norm{g_i^t - \nabla f_i(x^t)}^2.
\end{equation}
The following lemma represents this enhancement.
\begin{lemma} \label{lem:98y89fhd_8fd} Assume that $g_i^0\in \R^d_i$ for all $i \in [n]$. Then, it holds for all $t\geq 0$ that
\begin{equation}\label{eq:c_approximates_orthogonality}
\squeeze \norm{g^t -\nabla f(x^t)}^2 \leq 
%\frac{\xc}{n^2} \sum_{i=1}^n \norm{g_i^t - \nabla f_i(x^t)}^2 \overset{\eqref{eq:grad_variation_def}}{=} 
\frac{\xc}{n} G^t.
\end{equation}
\end{lemma}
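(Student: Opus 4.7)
The plan is to reduce Lemma~\ref{lem:98y89fhd_8fd} to a direct application of Lemma~\ref{lem:c_bound} with $u_i := g_i^t - \nabla f_i(x^t)$. Observe that
\[
g^t - \nabla f(x^t) \;=\; \frac{1}{n}\sum_{i=1}^n \left(g_i^t - \nabla f_i(x^t)\right) \;=\; \frac{1}{n}\sum_{i=1}^n u_i,
\]
so, if we can show $u_i \in \R^d_i$ for every $i$, then Lemma~\ref{lem:c_bound} yields
\[
\norm{g^t - \nabla f(x^t)}^2 \;=\; \frac{1}{n^2}\norm{\sum_{i=1}^n u_i}^2 \;\leq\; \frac{\xc}{n^2}\sum_{i=1}^n \norm{u_i}^2 \;=\; \frac{\xc}{n}\cdot\frac{1}{n}\sum_{i=1}^n G_i^t \;=\; \frac{\xc}{n}G^t,
\]
which is exactly~\eqref{eq:c_approximates_orthogonality}. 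So the entire content of the lemma is the membership claim $u_i \in \R^d_i$.

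The membership claim follows from a one-line induction on $t$. Base case: $g_i^0 \in \R^d_i$ by the hypothesis of the lemma, and $\nabla f_i(x^0) \in \R^d_i$ by the definition of $\cZ$ in~\eqref{eq:R_d_n_definiton} (the $j$-th coordinate of $\nabla f_i$ vanishes identically whenever $(i,j)\in\cZ$). Thus $u_i = g_i^0 - \nabla f_i(x^0) \in \R^d_i$ since $\R^d_i$ is a linear subspace. Inductive step: assume $g_i^t \in \R^d_i$. The update rule on line~\ref{alg_line:g_update_step} of Algorithm~\ref{alg:ef21} reads
\[
g_i^{t+1} \;=\; g_i^t + \topk{K_i}\!\left(\nabla f_i(x^{t+1}) - g_i^t\right).
\]
The argument of $\topk{K_i}$ lies in $\R^d_i$ (difference of two vectors in $\R^d_i$), and $\topk{K_i}$ only zeroes out or preserves individual coordinates, so it maps $\R^d_i$ to itself. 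Hence $g_i^{t+1} \in \R^d_i$, completing the induction, and $u_i = g_i^t - \nabla f_i(x^t) \in \R^d_i$ for every $t \geq 0$.

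There is no genuinely hard step here; the subtlety is simply to notice that Algorithm~\ref{alg:ef21} preserves the sparsity pattern of each local estimator $g_i^t$, which is why the error vectors $u_i$ live in the disjoint-enough subspaces $\R^d_i$ and why the sparsity parameter $\xc$ (rather than the ambient $n$) controls the norm of their sum via Lemma~\ref{lem:c_bound}. This is precisely the place where the ``rare features'' structure enters the \algname{EF21} analysis and buys the improvement over the naive bound $\norm{g^t-\nabla f(x^t)}^2 \leq G^t$ obtained from Jensen's inequality alone.
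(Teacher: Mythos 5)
Your proposal is correct and follows essentially the same route as the paper: reduce to Lemma~\ref{lem:c_bound} applied to $u_i^t = g_i^t - \nabla f_i(x^t)$, after establishing by induction that each $g_i^t$ stays in $\R^d_i$ because $\topk{K_i}$ preserves the sparsity pattern (the paper isolates this induction step as its Lemma~\ref{lem:98y08fd}, which you simply prove inline). No gaps.
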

In contrast to the standard analysis that uses the inequaity $\|g^t - \nabla f(x^t)\|^2 \leq G^t$, \Cref{lem:98y89fhd_8fd} provides an enhanced result by incorporating the sparsity parameter $\xc$.

\subsection{Main theorem}
Drawing on the key observations made above, we can now present our main result.

\begin{theorem}\label{thm:convergence_separate}
Let Assumptions~\ref{as:smooth}, \ref{as:lower_bounded}, \ref{as:L+} hold.
Let $\alpha\eqdef\min_i \alpha_i$, where $\alpha_i \eqdef  \frac{\min\{K_i,|\cJ_i|\}}{|\cJ_i|}$, $\theta \eqdef 1 - \sqrt{1 - \alpha}$ and $\beta\eqdef \frac{1-\alpha}{1 - \sqrt{1-\alpha}}$. Choose  $\gamma \leq \frac{1}{L + \Lpl \sqrt{\frac{\beta \xc}{\theta n}}}$.
%\[\frac{1}{\gamma} - L - \frac{\gamma \beta \xc \Lplsq}{\theta n} \geq 0\]
%\[1 - \gamma L - \frac{\gamma^2 \beta \xc \Lplsq}{\theta n} \geq 0\]
%\[\gamma^2\frac{ \beta \xc \Lplsq}{\theta n} + \gamma L \leq 1 \]
%Furthermore,  $\sqrt{\frac{\beta}{\theta}} \leq \frac{2\sqrt{1-\alpha}}{\alpha} .$
Under these conditions, the iterates of Algorithm~\ref{alg:ef21} (\algname{EF21}) satisfy
\begin{equation}\squeeze
\frac{1}{T} \sum\limits_{t=0}^{T-1} \norm{\nabla f(x^t)}^2 \leq \frac{2 \left(f(x^0) - f^\ast\right)}{\gamma T} + \frac{\xc}{n} \frac{ G^0 }{\theta T}.
\end{equation}
\end{theorem}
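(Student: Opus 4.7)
The plan is to mimic the Lyapunov-function argument of the original \algname{EF21} analysis, but to substitute each of the new sparsity-aware lemmas at the exact step where a smoothness constant or the aggregated compression error enters. Before starting, note that a one-line induction using the fact that $\topk{K_i}$ preserves the zero pattern of its argument shows $g_i^t\in\R^d_i$ for every $t$; this licenses Lemmas~\ref{lem:contraction_on_R^d_i} and~\ref{lem:98y89fhd_8fd} at every iteration.

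First, I would apply the $L$-smoothness descent lemma to the update $x^{t+1}=x^t-\gamma g^t$ and split $-\gamma\langle\nabla f(x^t),g^t\rangle$ via the polarization identity to obtain
\begin{equation*}
f(x^{t+1}) \le f(x^t) - \tfrac{\gamma}{2}\norm{\nabla f(x^t)}^2 - \tfrac{\gamma(1-L\gamma)}{2}\norm{g^t}^2 + \tfrac{\gamma}{2}\norm{g^t-\nabla f(x^t)}^2,
\end{equation*}
and then invoke Lemma~\ref{lem:98y89fhd_8fd} to replace the last term by $\tfrac{\gamma\xc}{2n}G^t$. This is where $\xc$ first enters the analysis.

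Next, I would derive a one-step recursion for $G^t$. Rewriting $g_i^{t+1}-\nabla f_i(x^{t+1}) = \topk{K_i}(\nabla f_i(x^{t+1})-g_i^t) - (\nabla f_i(x^{t+1})-g_i^t)$ and applying Lemma~\ref{lem:contraction_on_R^d_i}, then Young's inequality with parameter $s$, averaging over $i$, and using Assumption~\ref{as:L+} gives
\begin{equation*}
G^{t+1}\le (1-\alpha)(1+s)G^t+(1-\alpha)\bigl(1+\tfrac{1}{s}\bigr)\Lplsq\gamma^2\norm{g^t}^2.
\end{equation*}
The key algebraic move is the choice $s=\theta/\sqrt{1-\alpha}$: since $\sqrt{1-\alpha}=1-\theta$, one checks $(1-\alpha)(1+s)=1-\theta$ and $(1-\alpha)(1+\tfrac{1}{s})=\beta$, which is exactly what lets $\theta$ and $\beta$ play the roles they do in the theorem.

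Finally, I would introduce the Lyapunov function $\Phi^t:=f(x^t)-f^\ast+\tfrac{\gamma\xc}{2n\theta}G^t$. Combining the previous two inequalities, the $\tfrac{\gamma\xc}{2n}G^t$ coefficient arising from the descent step is exactly what the contraction factor $1-\theta$ is designed to kill, leaving
\begin{equation*}
\Phi^{t+1}-\Phi^t \le -\tfrac{\gamma}{2}\norm{\nabla f(x^t)}^2 - \tfrac{\gamma}{2}\Bigl(1-L\gamma-\tfrac{\xc\beta\Lplsq}{n\theta}\gamma^2\Bigr)\norm{g^t}^2.
\end{equation*}
The bracket is non-negative under the stated step-size because, elementarily, $\gamma\le 1/(A+B)$ implies $A\gamma+B^2\gamma^2\le 1$ (here $A=L$, $B=\Lpl\sqrt{\xc\beta/(n\theta)}$). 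Dropping the resulting non-positive $\norm{g^t}^2$ term, telescoping, using $\Phi^T\ge f^\ast$, and dividing by $\gamma T/2$ yields the claim, with the $\tfrac{\xc G^0}{n\theta T}$ residual coming from $\Phi^0$. The main obstacle is really just the bookkeeping in the second step: the Young parameter $s$ must be tuned so that the resulting $\theta$ exactly matches the Lyapunov weight $\tfrac{\xc}{n\theta}$ chosen to absorb the $\tfrac{\xc}{n}G^t$ term from Step~1, and simultaneously so that $\beta$ slots cleanly into the step-size bound.
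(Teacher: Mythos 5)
Your proposal is correct and follows essentially the same route as the paper: the same Lyapunov function $f(x^t)-f^\ast+\tfrac{\gamma\xc}{2\theta n}G^t$, the same descent step enhanced by Lemma~\ref{lem:98y89fhd_8fd}, and the same $G^t$ recursion with the Young parameter $s=\tfrac{1}{\sqrt{1-\alpha}}-1$ yielding the $(1-\theta,\beta)$ pair, with your bracket being the paper's coefficient of $\norm{x^{t+1}-x^t}^2$ rewritten via $\norm{x^{t+1}-x^t}^2=\gamma^2\norm{g^t}^2$. The only nit is the final telescoping step, where you should invoke $\Phi^T\ge 0$ (since $f(x^T)\ge f^\ast$ and $G^T\ge 0$) rather than ``$\Phi^T\ge f^\ast$''.
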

We make the following immediate observations:
\begin{enumerate}
\item If we let $K_i=1$ for all $i\in [n]$, then $\alpha_i = \frac{1}{|\cJ_i|}$ and $\alpha = \min_i \alpha_i =  \frac{1}{\max_i |\cJ_i|} \overset{\eqref{eq:sigma}}{=} \frac{1}{\xr}$. Thus, the term appearing in the stepsize can be bounded as follows:
$\sqrt{\nicefrac{\beta}{\theta}} = \nicefrac{(\sqrt{1-\alpha} + 1-\alpha)}{\alpha}  \leq \nicefrac{2\sqrt{1-\alpha}}{\alpha}  = 2 \xr \sqrt{1-\nicefrac{1}{\xr}} = 2 \sqrt{\xr(\xr-1)} \leq 2\xr$, where the proof of the first equality is deferred to the appendix. By using~\Cref{thm:convergence_separate}, we find that~\algname{EF21} with $\topk{1}$ compressor requires $\cO((L + \xr \Lpl \sqrt{\frac{c}{n}})\frac{1}{\varepsilon})$ bits to converge to the $\varepsilon$-stationary point. Comparing this with the standard communication complexity of~\algname{DGD} $\cO(\xr L \frac{1}{\varepsilon})$, we can see that~\algname{EF21} gets asymptotically faster if $\Lpl \sqrt{\frac{\xc}{n}} \leq L$.

\item By using the largest stepsize allowed by the theory, we get the bound
\iffalse
\begin{equation}
\label{eq:conv_bound}
\begin{aligned}\squeeze \frac{1}{T} \sum\limits_{t=0}^{T-1} \norm{\nabla f(x^t)}^2  &\leq \squeeze  \frac{2\left(L +  \sqrt{\frac{\max_j \left\{\sum_{i: (i,j)\notin \cZ} L_i^2 \right\}}{n}}\sqrt{\frac{ \beta \xc }{\theta n}}   \right) \Psi^0}{T} \\
&\leq  \squeeze \frac{2\left(L +   \min \left\{\sqrt{\frac{\xc  \max_i L_i^2}{n}}, \sqrt{\frac{\sum_{i=1}^n L_i^2}{n} }\right\} \sqrt{\frac{ \beta \xc }{\theta n}}   \right) \Psi^0}{T}.
\end{aligned}
\end{equation}
\fi
\begin{equation}
	\label{eq:conv_bound}
	\begin{aligned}\squeeze \frac{1}{T} \sum\limits_{t=0}^{T-1} \norm{\nabla f(x^t)}^2 \leq  \squeeze 2\left(L +   \min \left\{\sqrt{\frac{\xc  \max_i L_i^2}{n}}, \sqrt{\frac{\sum_{i=1}^n L_i^2}{n} }\right\} \sqrt{\frac{ \beta \xc }{\theta n}}   \right)  \frac{\Psi^0}{T}.
	\end{aligned}
\end{equation}
\item In a pessimistic scenario where $\xc=n$, the bound~\eqref{eq:conv_bound} recovers the standard \algname{EF21} rate: %$\squeeze \frac{1}{T} \sum\limits_{t=0}^{T-1} \norm{\nabla f(x^t)}^2  \leq  \squeeze   \frac{2\left(L +   \sqrt{\frac{\sum_{i=1}^n L_i^2}{n}}  \sqrt{\frac{ \beta }{\theta }}   \right) \Psi^0}{T}.$
\begin{eqnarray*}\squeeze \frac{1}{T} \sum\limits_{t=0}^{T-1} \norm{\nabla f(x^t)}^2  &\leq & \squeeze   2\left(L +   \sqrt{\frac{\sum_{i=1}^n L_i^2}{n}}  \sqrt{\frac{ \beta }{\theta }}   \right) \frac{\Psi^0}{T}.\end{eqnarray*}
\item In an optimistic scenario where $\xc=1$, the bound~\eqref{eq:conv_bound} recovers the \algname{EF21} rate in the separable regime, discussed in the appendix:
\begin{eqnarray*}\squeeze \frac{1}{T} \sum\limits_{t=0}^{T-1} \norm{\nabla f(x^t)}^2  &\leq & \squeeze  2\left(L +   \sqrt{\frac{\max_i L_i^2}{n} } \sqrt{\frac{ \beta }{\theta n}}   \right) \frac{\Psi^0}{T}.\end{eqnarray*}
\end{enumerate}
In the next section, we perform computational experiments to validate our theoretical results.

\section{Experiments}

The practical superiority of \algname{EF21} has been demonstrated in various previous publications \citep{szlendak2022permutation, EF21}. These publications have explicitly indicated that the stepsize of \algname{EF21} utilized in the experiments is fine-tuned, and is typically substantially larger than the value suggested by theoretical analysis. Consequently, the objective of our experimental section is not to reassert the dominance of the Error Feedback algorithm, but rather to substantiate our theoretical contentions. In order to do so, it suffices to perform small scale but carefully executed experiments.

\subsection{Linear regression on sparse data}
\label{sec:exp_lin_reg_with sparse_data}
%\elnur{The comparison with standard \algname{GD} is missing.}

\begin{figure*}[t]
	\centering
	\captionsetup[sub]{font=scriptsize,labelfont={}}	
	\captionsetup[subfigure]{labelformat=empty}
	%\captionsetup{position=top}
	
	\begin{subfigure}[ht]{0.32\textwidth}
		\includegraphics[width=\textwidth]{./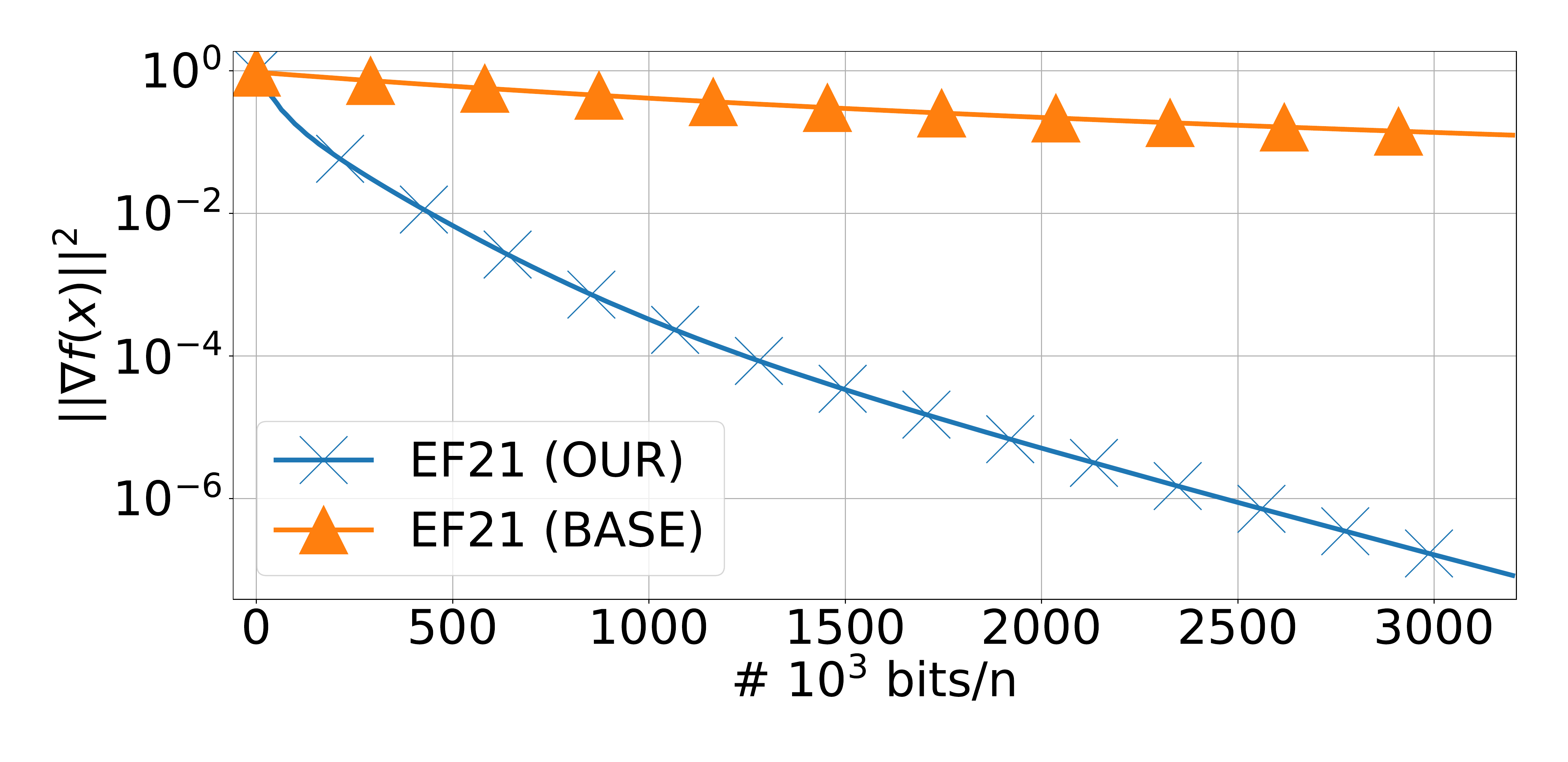} \caption{ $\xc/n=0.05$}
	\end{subfigure}
	\begin{subfigure}[ht]{0.32\textwidth}
		\includegraphics[width=\textwidth]{./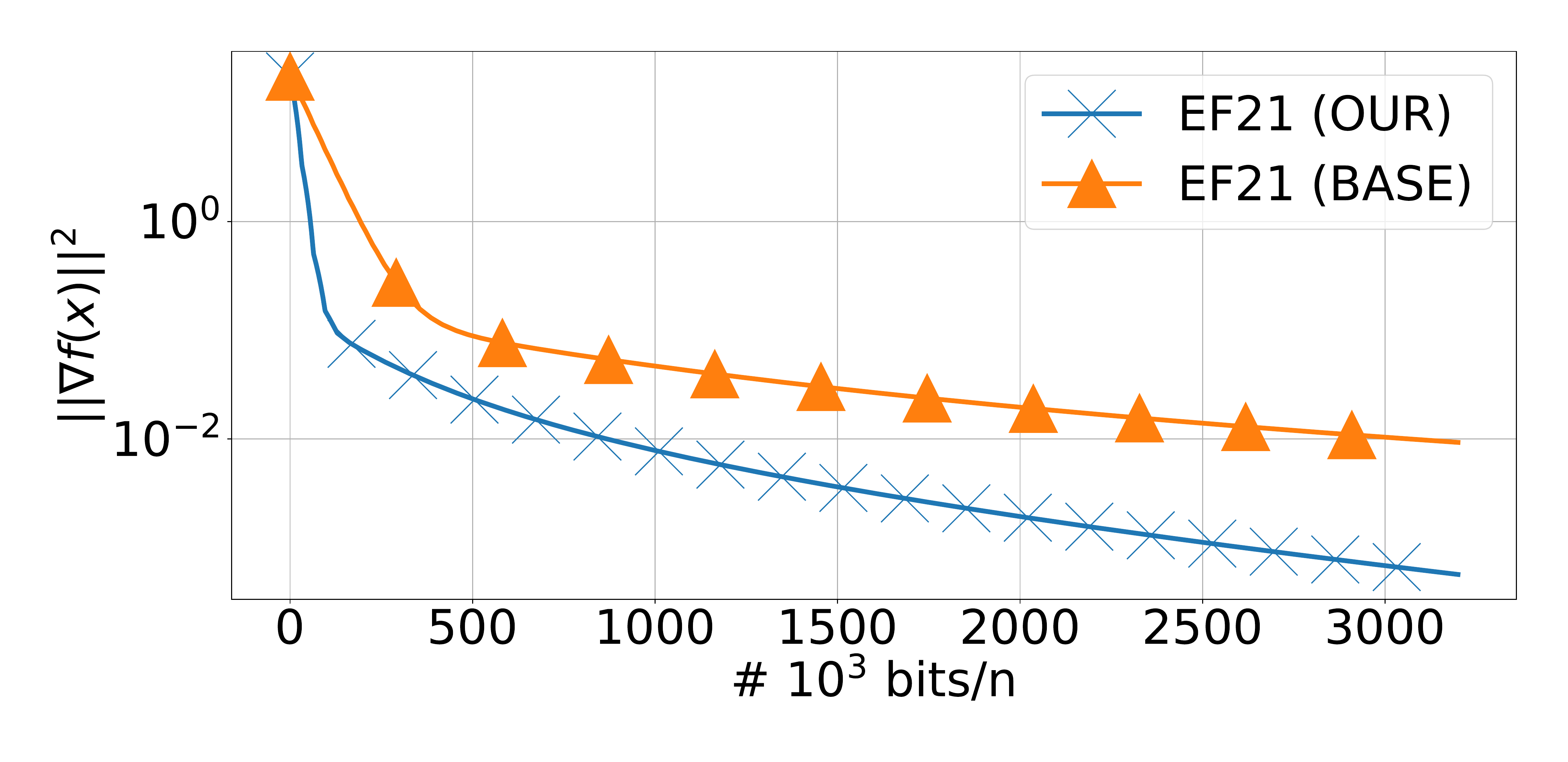} \caption{ $\xc/n=0.5$}
	\end{subfigure}
	\begin{subfigure}[ht]{0.32\textwidth}
		\includegraphics[width=\textwidth]{./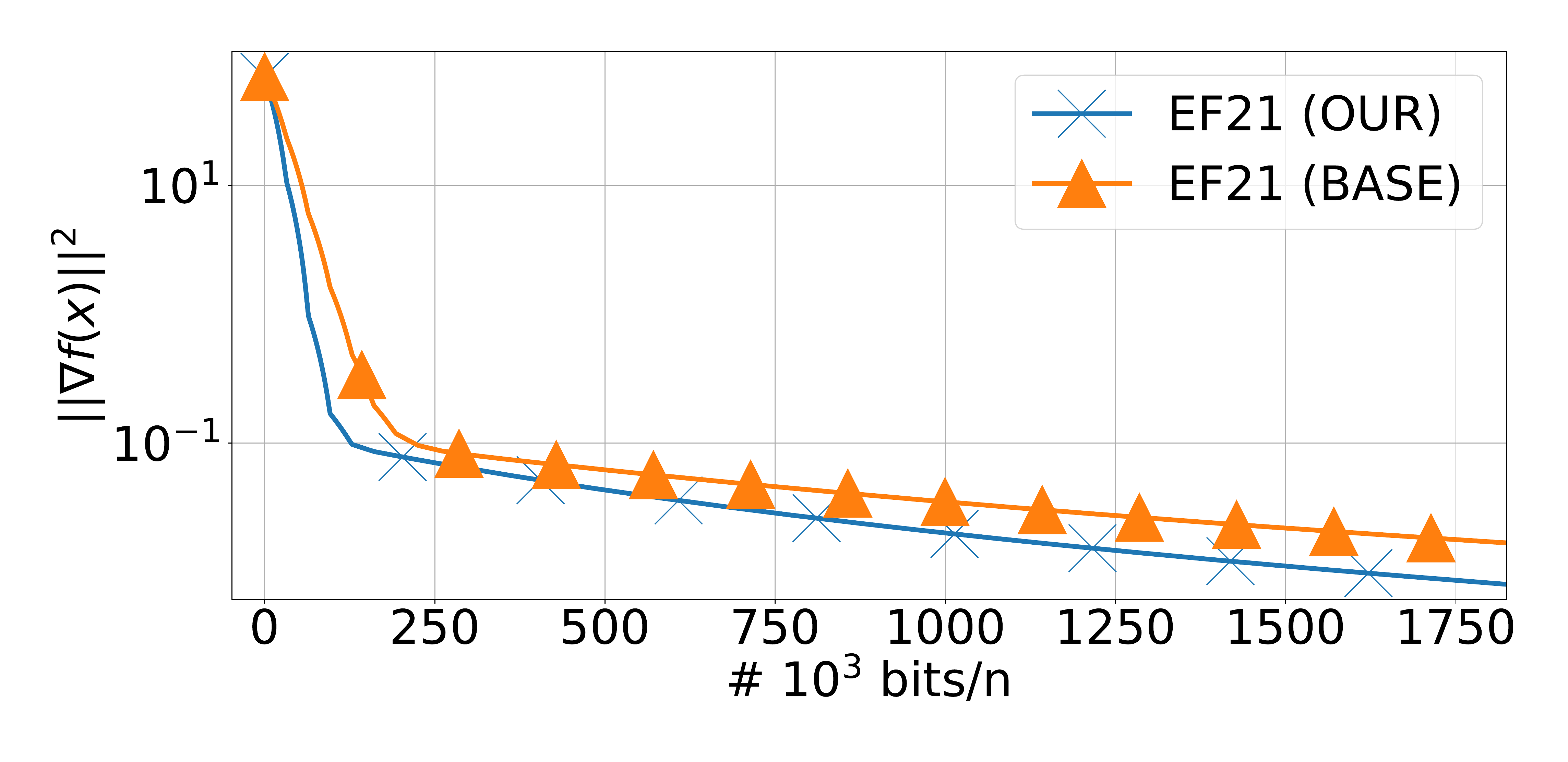} \caption{$\xc/n=0.9$}
	\end{subfigure}
	
	\iffalse	
	\begin{subfigure}[ht]{0.32\textwidth}
		\includegraphics[width=\textwidth]{./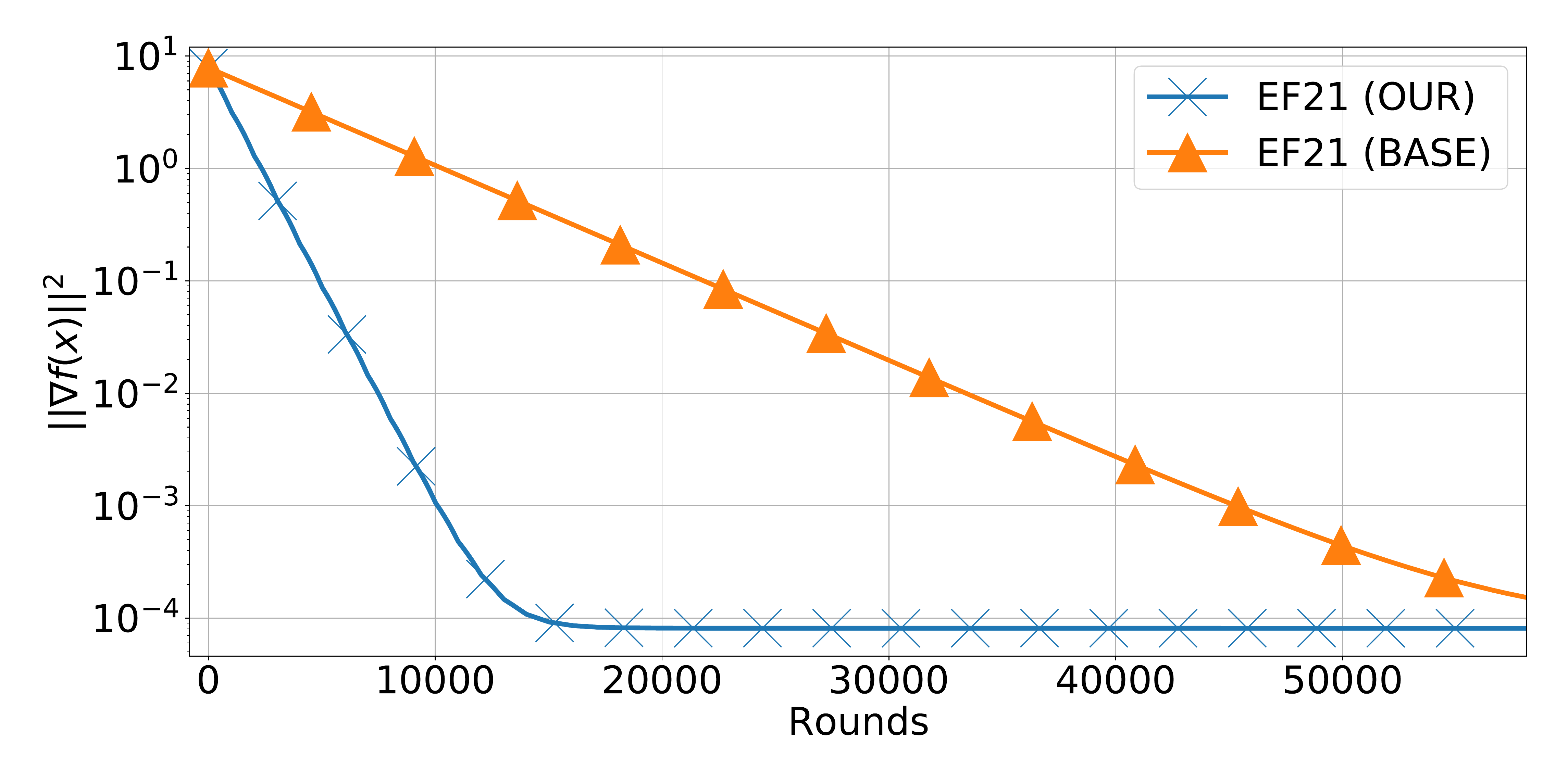} \caption{{ ($a_2$) $c/n=0.05, v=0.99$}}
	\end{subfigure}
	\begin{subfigure}[ht]{0.32\textwidth}
		\includegraphics[width=\textwidth]{./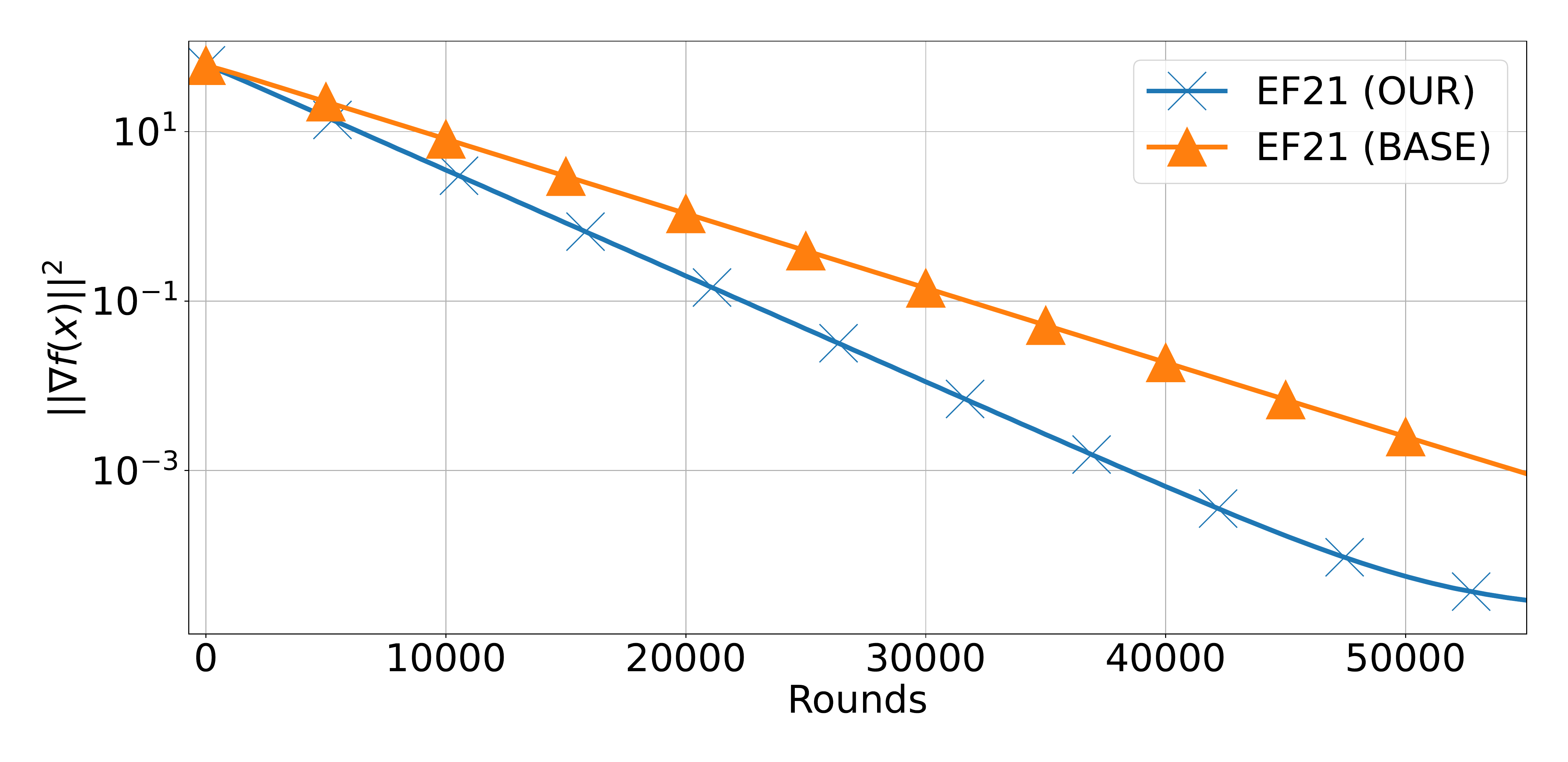} \caption{{ ($c_2$) $c/n=0.5, v=0.99$}}
	\end{subfigure}
	\begin{subfigure}[ht]{0.32\textwidth}
		\includegraphics[width=\textwidth]{./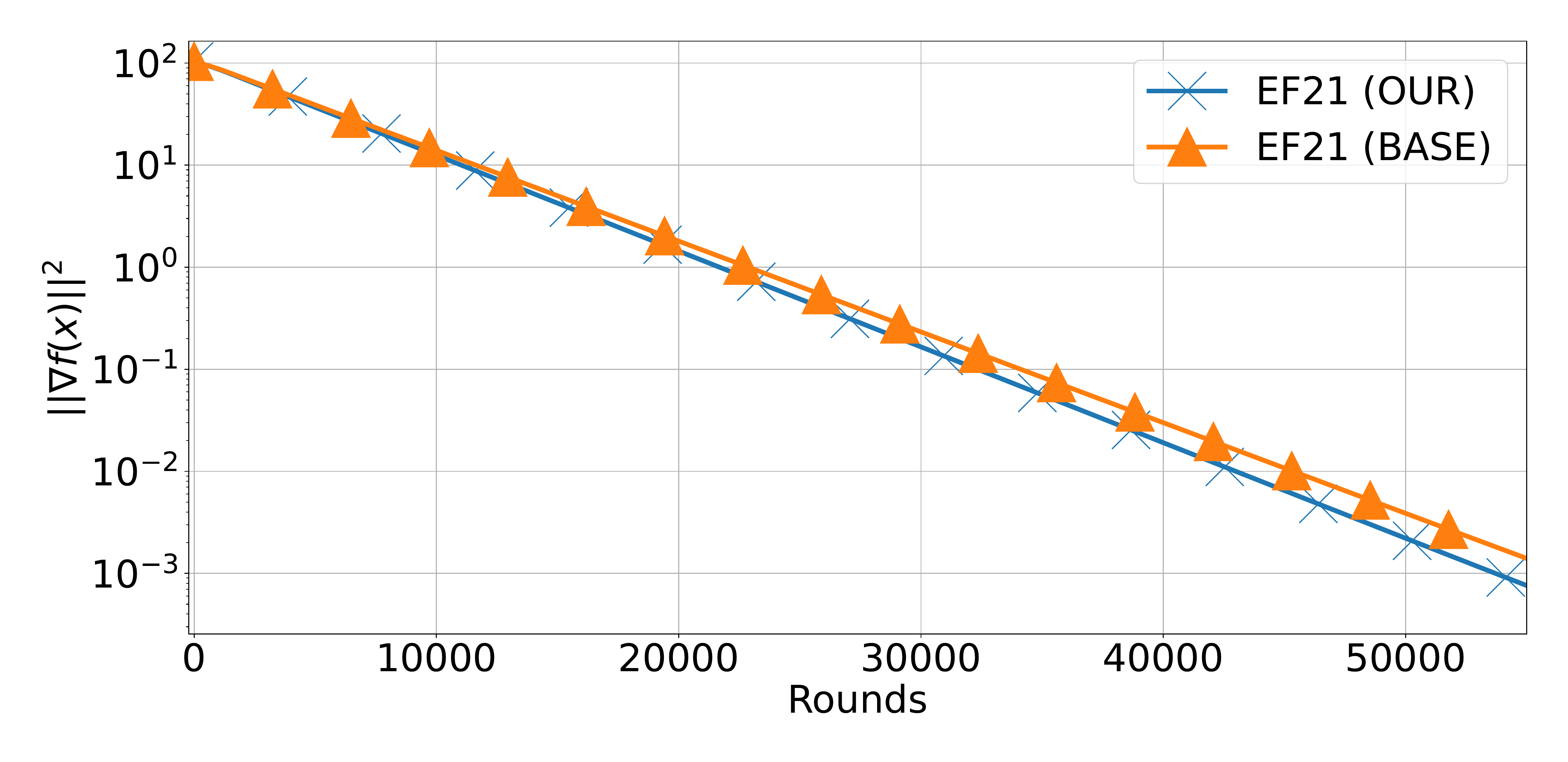} \caption{{($d_2$) $c/n=0.9, v=0.99$}}
	\end{subfigure}
	\fi
	
	%\vspave{-1pt}
	\caption{Comparison of the performance of \algname{EF21}+\;\topk{1} with the standard approach proposed by \citet{EF21} and a newly proposed stepsize (see~\Cref{thm:convergence_separate}) on the linear regression problem.  The sparsity pattern $\xc$ is controlled by manipulating the sparsity of the data matrix. }
	\label{fig:linear_regression_synthetic}
\end{figure*}

Consider the following optimization problem:

$$
 \squeeze \min\limits_{x \in \RR^d} \left\{f(x) = \frac{1}{n}\sum \limits_{i=1}^n f_i(x) =  \frac{1}{n}\sum \limits_{i=1}^n \frac{1}{m} \left\|\bA_i x - b_i\right\|^2\right\},
$$

where $\bA_i \in \RR^{m \times d}, b_i \in \RR^m$ are the sparse training data and labels. Each $f_i$ is a $L_i$-smooth function, where $L_i~=~ \sup_{x\in \RR^d} \|\nabla^2 f_i(x)\| ~=~ \left\|\frac{2}{m} \bA_i^\top \bA_i\right\|$~\citep{nesterov2018lectures}. We show in the appendix that $f$ satisfies~\Cref{as:L+} with $\Lplsq~=~\frac{4}{m^2 n} \lambda_{\max} \left(\sumin (\bA_i^\top \bA_i)^2 \right)$, where $\lambda_{\max}(\cdot)$ refers to the largest eigenvalue. In all experiments in this section, we fix $n$, $d$ and $m$ to 500, 100 and 12, respectively.

As the function $f_i$ is a generalized linear function, we can manipulate the sparsity of the data matrix $\bA_i$ to adjust the sparsity parameter $\xc$ (see~\Cref{ex:linear_models}). In our experiments, we varied the ratio $\xc/n$, a critical factor in the theoretical analysis, over the values in the list $[0.05, 0.5, 0.9]$. To strengthen our findings, we controlled individual smoothness constants $L_i$ to ensure that the constant $\Lplsq$ from~\Cref{as:L+}, which is presented in our theoretical results, was much smaller than $\tilde{L}^2 = \avein L_i^2$ used by~\citet{EF21}.  Further details can be found in the appendix.

 All experiments were implemented using {\tt FL\_PyTorch} \citep{burlachenko2021fl_pytorch} and were conducted on two Linux workstations with {\tt x86\_64} architecture and 48 CPUs each.

The results presented in Figure~\ref{fig:linear_regression_synthetic} demonstrate that the performance gap between \algname{EF21} with the standard and new stepsizes, as proposed in~\Cref{thm:convergence_separate}, is significant when the parameter $\xc$ is much smaller than its maximum value $n$. Conversely, as $\xc$ approaches $n$, the difference in performance becomes negligible. This is expected because firstly, the new stepsize scales directly with the ratio $\xc/n$, and secondly, the constant $\Lplsq$, as can be seen from \eqref{eq:L_+_upper_bound}, approaches $\tilde{L}^2 = \avein L_i^2$ which is utilized in the standard theory. The findings align with the paper's main claim that \algname{EF21} achieves faster convergence with smaller sparsity parameter $\xc$.

%From Fig. \ref{fig:linear_regression_synthetic} we can see as $c/n$ increases and come close to $1$ \algname{EF21} with our analysis is reducing to standard \algname{EF21}. From Fig. \ref{fig:exp_syn_v_0_0001} ($a_1$) we can observe the most tremendous improvement due to decreasing $\Lpl$ by factor $\sqrt{n}=10$ from the construction of $L_i$ distribution. The second contribution is due to that $1/(c/n)=1/0.05=20$. These two causes jointly lead to practical improvement in the convergence speed of $\|\nabla f(x^t)\|^2$ by a factor $200$ in terms of communication rounds. In another hand, we can generate situation Fig. \ref{fig:exp_syn_v_0_0001} ($d_3$) when both causes do not give sufficient convergence improvements.

%\kostya{TODO: describe the generation of dataset}
%\kostya{But in all our synthetic settings each xi is shared exactly across "c" clients.}

\subsection{Logistic regression with adaptive stepsize}
%\elnur{Kostya, please remove '$d = x, n = y$' on the plots, additional text is redundant. Also remove 'TopK[K=1]' -- we indicate this in the main body of the text.}
The inequalities in~\eqref{eq:L_+_upper_bound} and~\eqref{eq:c_approximates_orthogonality} rely heavily on the parameter $\xc$. To explore the possibility of eliminating the sparsity condition altogether, we consider replacing the initial definition of $\xc$ as a sparsity pattern with~\eqref{eq:c_approximates_orthogonality}. In this experimental section, we investigate this question.

We consider the following optimization problem:

$$
\squeeze \min\limits_{x \in \RR^d} \left\{f(x) = \frac{1}{N} \sum\limits_{i=1}^N \log(1 + e^{-y_i a_i^\top x})\right\},
$$
where $a_i \in \RR^d, y \in \{-1, 1\}$ represent the training data and labels, respectively. We utilize three LIBSVM~\citep{chang2011libsvm} datasets, namely \textit{phishing, mushrooms, w5a}, as the training data, dividing the data evenly between $n=300$ clients.

At each iteration of~\algname{EF21}, we compute the parameter $\xct$ adaptively as the smallest value satisfying Inequality \eqref{eq:c_approximates_orthogonality}, i.e., we set $$\xct \eqdef \frac{n \|g^t-\nabla f(x^t)\|^2}{G^t}.$$ We use this quantity to define $$L_{+}^t \eqdef \min \left\{ \sqrt{\frac{\xct  \max_i L_i^2}{n}} , \sqrt{\frac{\sum_{i=1}^n L_i^2} {n}} \right\},$$ as an estimation of $L_+$. Finally, we compute the stepsize using the formula provided in~\Cref{thm:convergence_separate},
$$
\squeeze \gamma^t \eqdef \left(L + \Lplt \sqrt{\frac{ \xct }{n}} \frac{\sqrt{1-\alpha} + 1-\alpha}{\alpha} \right)^{-1},
$$
but with the adaptive quantities $L_+^t$ and $\xct$ replacing the quantities $L_+$ and $\xc$ they estimate. Note  that evaluating $\xct$ is time consuming as it requires the computation of $\nabla f(x^t)$, which is usually unavailable at the master.

\begin{figure*}[t]
	\centering
	\captionsetup[sub]{font=scriptsize,labelfont={}}	
	\captionsetup[subfigure]{labelformat=empty}
	%\captionsetup{position=top}
	
	\begin{subfigure}[ht]{0.32\textwidth}
		\includegraphics[width=\textwidth]{./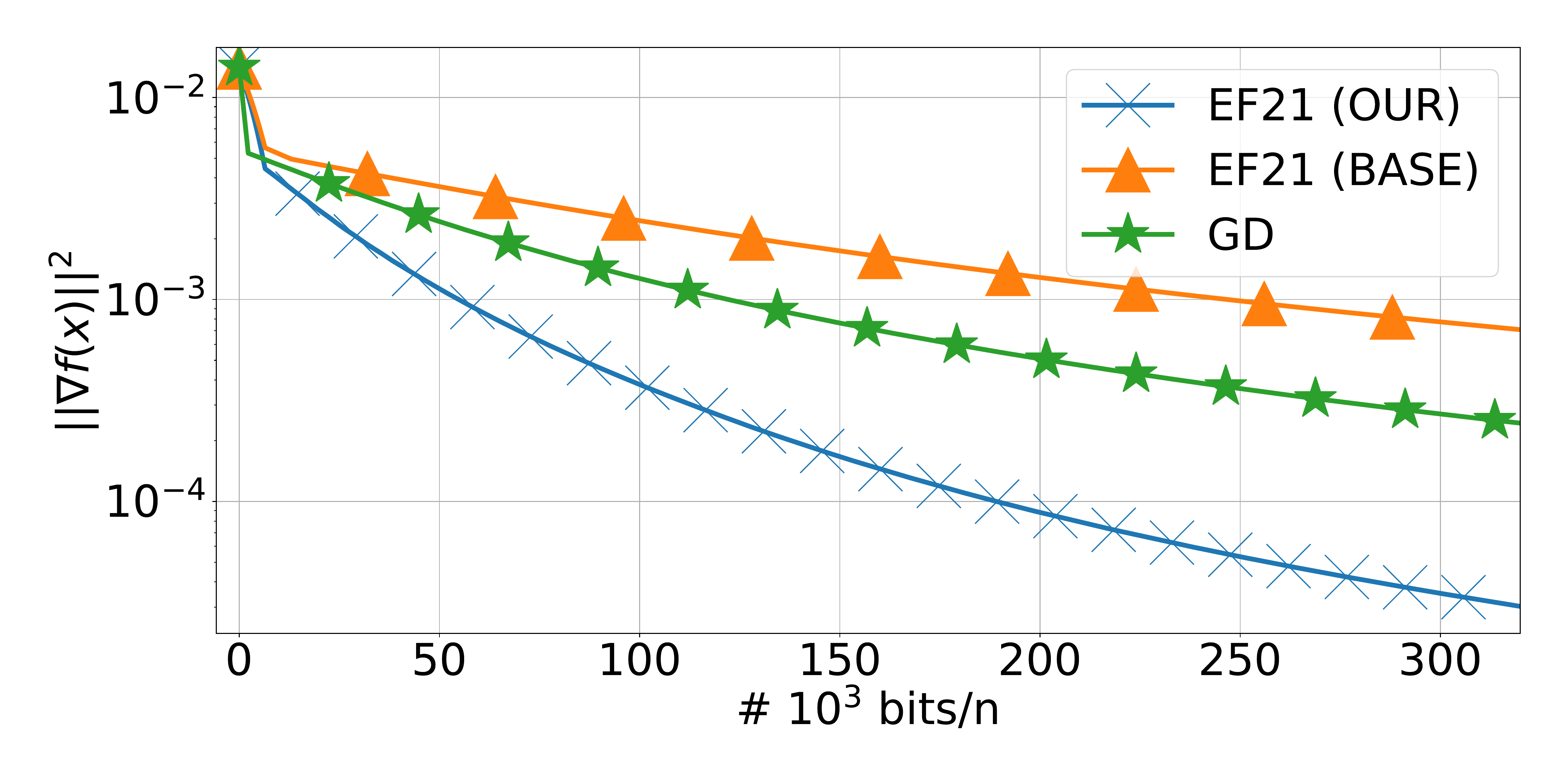}
		\caption{(a) phishing}
	\end{subfigure}		
	\begin{subfigure}[ht]{0.32\textwidth}
		\includegraphics[width=\textwidth]{./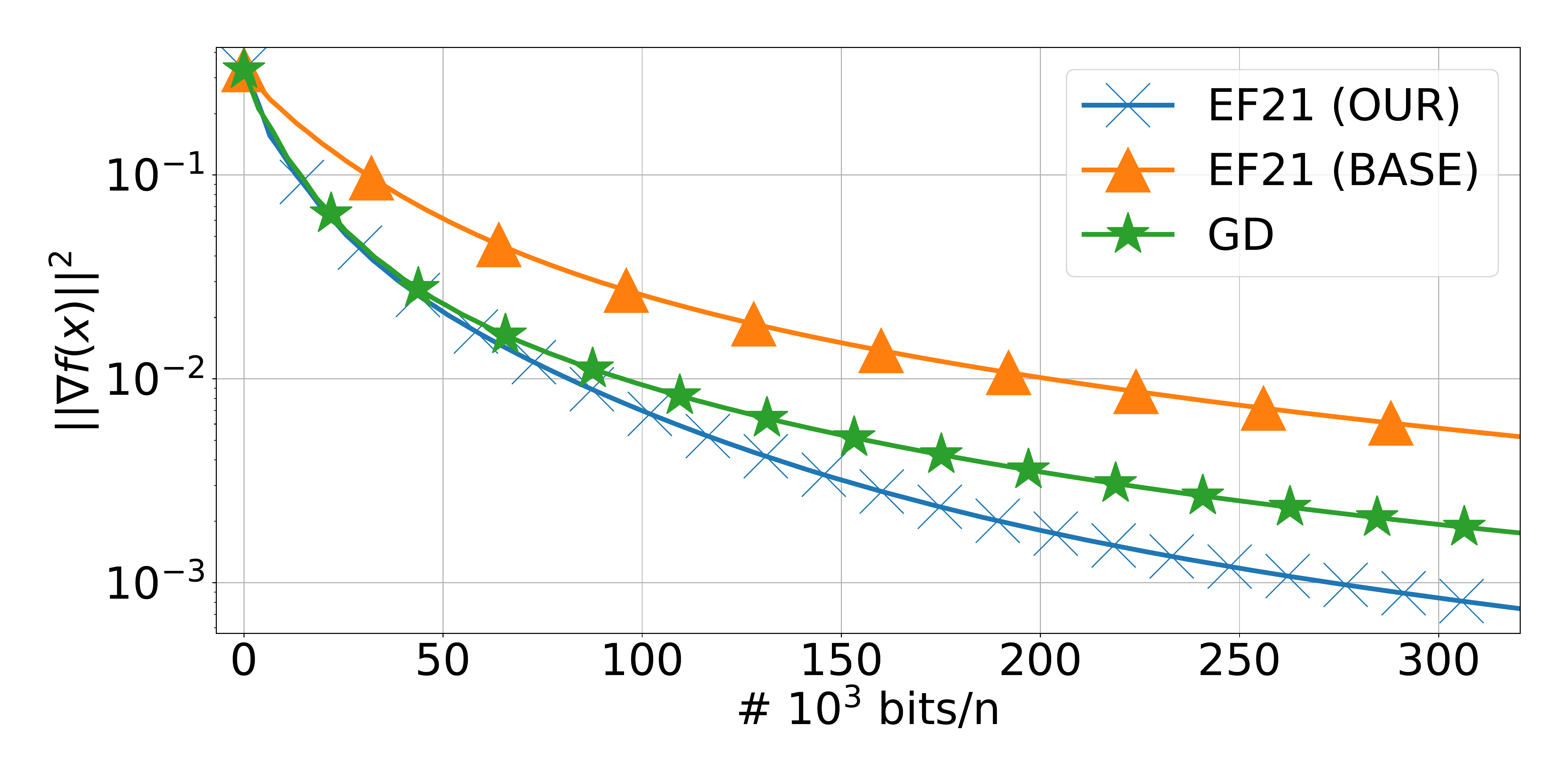} 
	\caption{(b) mushrooms}		
	\end{subfigure}	
	\begin{subfigure}[ht]{0.32\textwidth}
		\includegraphics[width=\textwidth]{./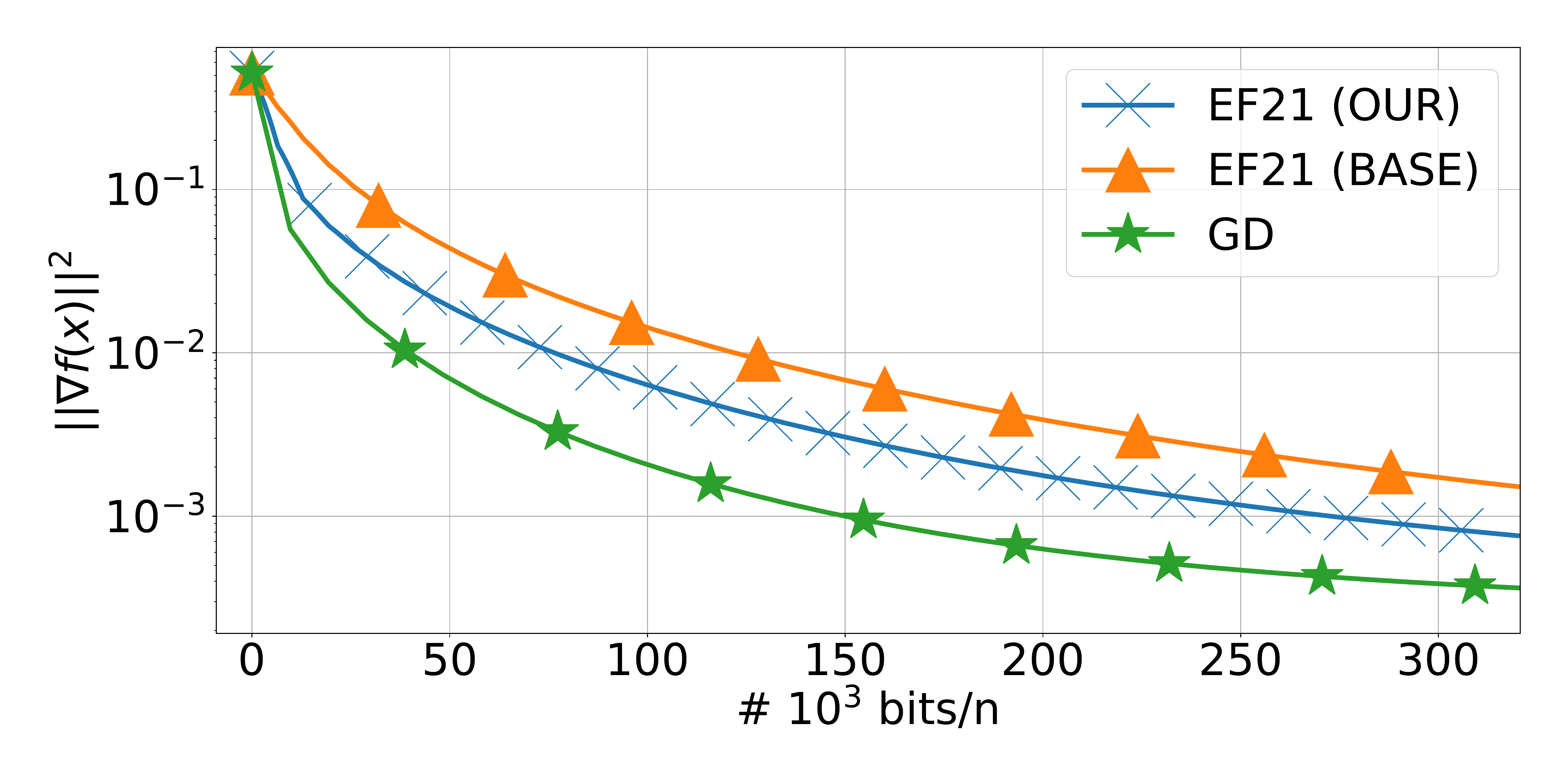} 
		\caption{(c) w5a}
	\end{subfigure}	
	%\vspave{-1pt}
	\caption{Comparison of the performance of the \algname{EF21}+\;\topk{1} algorithm, with both standard and adaptive stepsize, and the \algname{GD} method, on a logistic regression problem on the LIBSVM datasets. All stepsizes are chosen based on the theory.}
	\label{fig:log_reg_main}
\end{figure*}

As can be seen from \Cref{fig:log_reg_main}, our results demonstrate that adaptive computation of $\xct$ can be a promising direction for further investigation.

\section*{Acknowledgements}

Peter Richt\'{a}rik and Konstantin Burlachenko acknowledge support from the SDAIA-KAUST Center of Excellence in Data Science and Artificial Intelligence, and all authors acknowledge support from the KAUST Baseline Research Fund.

\newpage

\bibliographystyle{plainnat}
\bibliography{neurips_2023.bib}

\clearpage

\appendix
\part*{Appendix}

\tableofcontents

\newpage

\section{Auxiliary Results and Missing Proofs}

\subsection{Proof of Lemma~\ref{lm:alpha_1_the_best}}

\begin{proof}
	Let us now recall that \algname{EF21} needs at least\footnote{We suppose the initial gradient estimate $g^i_0$ equals $\nabla f_i(x^0)$ as a part of preprocessing step. This zeroes out the second term of Theorem 2 in~\citep{EF21}.}
	$
	T = \frac{2\Delta_f^0 \left(L + \tilde{L} \left(\frac{1 + \sqrt{1-\alpha}}{\alpha} - 1\right) \right)}{\varepsilon}
	$
	iterations to ensure 
	$
	\ExpBr{\norm{\nabla f(\hat{x}^T)}^2}\leq \varepsilon,
	$
	where $\Delta_f^0 = f(x^0) - f^*$, $\tilde{L} = \sqrt{\avein L_i^2}$, and $\alpha$ is the contraction parameter of the compressor. In the case of $\topk{K}$, as was noted earlier, $\alpha = \frac{k}{d}$. To find the optimal $\alpha$, which requires the minimum number of communication, we need to minimize $T \cdot k \cdot n$, since at each iteration of the algorithm each of $n$ clients sends $k$ float numbers. Thus,
	\begin{align*}
		&\min\limits_{k} \frac{2\Delta_f^0 \left(L + \tilde{L} \left(\frac{1 + \sqrt{1-\alpha}}{\alpha} - 1\right) \right)}{\varepsilon} \cdot k \cdot n \\
		&  \overset{\alpha = \frac{k}{d}}{\Longleftrightarrow} \ \min\limits_{\alpha} \left\{\xi(\alpha) \eqdef \left(L + \tilde{L}  \left(\frac{1 + \sqrt{1-\alpha}}{\alpha} - 1\right)\right) \cdot \alpha\right\}.
	\end{align*}
	
	Taking the derivative of the object over $\alpha$, we get
	\begin{eqnarray*}
		\xi'(\alpha) &= &  L + \tilde{L}  \left(\frac{1 + \sqrt{1-\alpha}}{\alpha} - 1\right) + \tilde{L}\alpha \left(-\frac{1}{(1 - \sqrt{1-\alpha})^2} \frac{1}{2\sqrt{1-\alpha}} \right)\\
		& =  &  L + \tilde{L}  \left(\frac{1 + \sqrt{1-\alpha}}{\alpha} - 1\right) - \tilde{L}\frac{1+\sqrt{1+\alpha}}{1 - \sqrt{1-\alpha}} \frac{1}{2\sqrt{1-\alpha}} \\
		& =  & L - \tilde{L} + \frac{\tilde{L}}{1 - \sqrt{1 - \alpha}} \left(1 - \frac{1 + \sqrt{1 + \alpha}}{2\sqrt{1 - \alpha}} \right) \\
		& = &  L - \tilde{L} + \frac{\tilde{L}}{(1 - \sqrt{1-\alpha}) 2 \sqrt{1 - \alpha}} \underbrace{(\sqrt{1 - \alpha} - 1)}_{\leq 0} \\
		& \leq & L - \tilde{L} \\
		& \leq & 0,
	\end{eqnarray*}
	where the last inequality holds since $L \leq \tilde{L}$. Indeed, $L \leq \avein L_i \leq \sqrt{\avein L_i^2} = \tilde{L}$. As a result, it can be concluded that the minimum of the objective function is achieved when the value of $\alpha$ is the largest, i.e., $\alpha = 1$). For the specific case of the $\topk{K}$ compressor, choosing $\alpha=1$ corresponds to selecting $k=d$ in the compressor definition.
\end{proof}

\subsection{Refining smoothness when sparsity is present}

\begin{lemma}\label{lem:L_i-implication}
	If Assumption~\ref{as:L_i} holds, then
	for every $i \in [n]$, we have  
	\begin{equation}\label{eq:b8d9yhd_098ufd} \sum \limits_{j: (i,j)\notin \cZ} ( (\nabla f_i(x))_j - (\nabla f_i(y))_j )^2 \leq L_i^2 \sum \limits_{j: (i,j)\notin \cZ} (x_j - y_j)^2, \qquad \forall x,y\in\R^d.\end{equation}
\end{lemma}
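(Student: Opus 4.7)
The plan is to exploit two structural facts that follow immediately from the definition of $\cZ$: first, if $(i,j)\in\cZ$ then the $j$-th coordinate of $\nabla f_i$ is identically zero, and second, if $(i,j)\in\cZ$ then $f_i$ does not depend on $x_j$ at all, so $\nabla f_i$ is invariant under arbitrary changes to those coordinates. These let us rewrite the left-hand side as a full norm and trim the right-hand side down to the active coordinates.

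Concretely, the first step is to notice that for any $j$ with $(i,j)\in \cZ$ and any $x\in\R^d$, the partial derivative $(\nabla f_i(x))_j$ equals $0$. Hence
\[
\sum_{j:(i,j)\notin \cZ} \bigl((\nabla f_i(x))_j - (\nabla f_i(y))_j\bigr)^2 \;=\; \sum_{j=1}^d \bigl((\nabla f_i(x))_j - (\nabla f_i(y))_j\bigr)^2 \;=\; \|\nabla f_i(x) - \nabla f_i(y)\|^2,
\]
because the terms indexed by $(i,j)\in\cZ$ vanish.

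Next, I would define a ``hybrid'' vector $\tilde x \in \R^d$ by setting $\tilde x_j = x_j$ for $j\in \cJ_i$ and $\tilde x_j = y_j$ for $j\notin \cJ_i$. Since $f_i$ does not depend on any coordinate indexed by $j\notin \cJ_i$, we have $\nabla f_i(\tilde x) = \nabla f_i(x)$. Applying the hypothesis~\eqref{eq:L_i} to the pair $(\tilde x, y)$ yields
\[
\|\nabla f_i(x) - \nabla f_i(y)\|^2 \;=\; \|\nabla f_i(\tilde x) - \nabla f_i(y)\|^2 \;\leq\; L_i^2 \|\tilde x - y\|^2 \;=\; L_i^2 \sum_{j:(i,j)\notin \cZ}(x_j - y_j)^2,
\]
where the last equality uses that $\tilde x_j - y_j = 0$ for $j\notin\cJ_i$ and $\tilde x_j - y_j = x_j - y_j$ for $j\in\cJ_i$. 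Chaining this with the first step gives the claim.

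I do not expect any serious obstacle here; the only subtlety is being precise about why $(i,j)\in\cZ$ implies both $(\nabla f_i)_j \equiv 0$ and the invariance of $\nabla f_i$ under changes of $x_j$. Both follow from the definition $\cZ = \{(i,j) : (\nabla f_i(x))_j \equiv 0\}$ combined with standard calculus (a smooth function whose $j$-th partial derivative vanishes identically is independent of its $j$-th argument on~$\R^d$). With this hybrid-vector trick the proof reduces to one application of Assumption~\ref{as:L_i}.
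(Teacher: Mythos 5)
Your proposal is correct and follows essentially the same route as the paper: both arguments rewrite the left-hand side as the full squared gradient-difference norm (since the inactive coordinates of the gradient vanish), then replace the arguments of $\nabla f_i$ by vectors that agree on the inactive coordinates so that one application of Assumption~\ref{as:L_i} yields a right-hand side supported only on $\cJ_i$. The only cosmetic difference is that you modify a single vector (setting the inactive coordinates of $x$ equal to those of $y$) whereas the paper zeroes out the inactive coordinates of both $x$ and $y$; the two constructions are interchangeable.
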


\begin{proof}
Given any $i\in [n]$ and any $x,y\in \R^d$, we know that
\begin{eqnarray}\sum_{j: (i,j)\notin \cZ} \left( (\nabla f_i(x) )_j - (\nabla f_i(y))_j \right)^2 &=&   \sum_{j=1}^d \left( (\nabla f_i(x) )_j - (\nabla f_i(y))_j \right)^2 \notag \\
&=& \norm{\nabla f_i(x) - \nabla f_i(y)}^2 \notag\\
&\leq & L_i^2 \norm{x-y}^2,  \label{eq:-080897df}
\end{eqnarray}
where the last inequality follows from Assumption~\ref{as:L_i}. Let $x'$ and $y'$ be formed from $x$ and $y$ by replacing the coordinates $j$ for which $(i,j)\in \cZ$ by zeros. That is, 
\begin{equation} \label{eq:h787td98hfd}x'_j = \begin{cases} x_j & (i,j) \notin \cZ \\
0 & (i,j) \in \cZ
\end{cases}, \qquad \qquad y'_j = \begin{cases} y_j & (i,j) \notin \cZ \\
0 & (i,j) \in \cZ
\end{cases}.\end{equation}

Applying inequality \eqref{eq:-080897df} with $x \leftarrow x'$ and $y \leftarrow y'$, we obtain
\begin{eqnarray}\sum_{j: (i,j)\notin \cZ} \left( (\nabla f_i(x') )_j - (\nabla f_i(y'))_j \right)^2 
&\leq & L_i^2 \norm{x'-y'}^2\notag\\
&=& L_i^2 \left(\sum_{j: (i,j)\notin \cZ} (x'_j - y'_j)^2 +  \sum_{j: (i,j)\in \cZ} (x'_j - y'_j)^2\right)\notag\\
&\overset{\eqref{eq:h787td98hfd}}{=}& L_i^2 \left(\sum_{j: (i,j)\notin \cZ} (x_j - y_j)^2 +  \sum_{j: (i,j)\in \cZ} (0 - 0)^2\right) \notag\\
&=& L_i^2 \sum_{j: (i,j)\notin \cZ} (x_j - y_j)^2. \label{eq:080fd--0))*f}
\end{eqnarray}

%\begin{eqnarray}\sum_{j: (i,j)\notin \cZ} \left( (\nabla f_i(x) )_j - (\nabla f_i(y))_j \right)^2 &=&   \sum_{j=1}^d \left( (\nabla f_i(x) )_j - (\nabla f_i(y))_j \right)^2 \notag \\
%&=& \norm{\nabla f_i(x) - \nabla f_i(y)}^2 \notag\\
%&\leq & L_i^2 \norm{x-y}^2,  \label{eq:-080897df}
%\end{eqnarray}

The result now follows by comparing the left-hand side of \eqref{eq:-080897df} and the right-hand side of \eqref{eq:080fd--0))*f} in view of the observation that $\nabla f_i(x) = \nabla f_i(x')$ and $\nabla f_i(y) = \nabla f_i(y')$.
\end{proof}

\subsection{Proof of Lemma~\ref{lem:M-bound-via-c}}

\begin{proof}

Using Lemma~\ref{lem:L_i-implication}, we can now write
\begin{equation}\label{eq:aux_smoothness1}
\begin{aligned}
 \frac{1}{n} \sum_{i=1}^n  \norm{\nabla f_i(x) - \nabla f_i(y)}^2 &=&  \frac{1}{n} \sum_{i=1}^n  \sum_{j=1}^d \left( (\nabla f_i(x) )_j - (\nabla f_i(y))_j \right)^2 \\
&=&  \frac{1}{n} \sum_{i=1}^n  \sum_{j: (i,j)\notin \cZ} \left( (\nabla f_i(x) )_j - (\nabla f_i(y))_j \right)^2 \\
&\overset{\eqref{eq:b8d9yhd_098ufd}}{\leq}&  \frac{1}{n} \sum_{i=1}^n  L_i^2 \sum_{j: (i,j)\notin \cZ} (x_j - y_j)^2 \\
&=&   \frac{1}{n} \sum_{i=1}^n  \sum_{j: (i,j)\notin \cZ} L_i^2 (x_j - y_j)^2  \\
&=&   \frac{1}{n} \sum_{j=1}^d \sum_{i : (i,j)\notin \cZ}  L_i^2 (x_j - y_j)^2  \\
&=& \frac{1}{n}\sum_{j=1}^d  \left[(x_j - y_j)^2 \sum_{i: (i,j)\notin \cZ} L_i^2\right] .
\end{aligned}
\end{equation}

To further advance our analysis, we proceed by determining the maximum value in each individual sum term.
\begin{eqnarray*}
 \frac{1}{n} \sum_{i=1}^n  \norm{\nabla f_i(x) - \nabla f_i(y)}^2 &\overset{\eqref{eq:aux_smoothness1}}{\leq}& \frac{1}{n}\sum_{j=1}^d  \left[(x_j - y_j)^2 \sum_{i: (i,j)\notin \cZ} L_i^2\right] \\
&\leq &\frac{1}{n}\sum_{j=1}^d  \left[(x_j - y_j)^2 \max_j \left\{ \sum_{i: (i,j)\notin \cZ} L_i^2 \right\}\right]  \\
&=& \frac{\max_j \left\{\sum_{i: (i,j)\notin \cZ} L_i^2\right\}}{n}\sum_{j=1}^d  (x_j - y_j)^2   \\
&=&\frac{\max_j \left\{\sum_{i: (i,j)\notin \cZ} L_i^2 \right\}}{n} \norm{x - y}^2.
%&\leq & \frac{\max_i L_i^2}{n} \sum_{i=1}^n \sum_{j: (i,j)\notin \cZ} (x_j - y_j)^2\\
%&=&\frac{\max_i L_i^2}{n} \sum_{j=1}^d \sum_{i: (i,j)\notin \cZ} (x_j - y_j)^2\\
%&=&\frac{\max_i L_i^2}{n} \sum_{j=1}^d \card\left\{i: (i,j)\notin \cZ\right\} (x_j - y_j)^2\\
%&\overset{\eqref{eq:sigma}}{\leq} & \frac{\xc  \max_i L_i^2}{n} \sum_{j=1}^d (x_j - y_j)^2\\
%&=&\frac{\xc  \max_i L_i^2}{n} \norm{x - y}^2.
\end{eqnarray*}
Based on the preceding inequality, it can be deduced that $\Lplsq \leq \frac{\max_j \left\{\sum_{i: (i,j)\notin \cZ} L_i^2 \right\}}{n} $. Furthermore,
\begin{eqnarray*}\frac{\max_j \left\{\sum_{i: (i,j)\notin \cZ} L_i^2 \right\}}{n} & \leq &\frac{\max_j \left\{\sum_{i: (i,j)\notin \cZ} \max_i L_i^2 \right\}}{n} \\
&= & \frac{\left( \max_i L_i^2 \right) \left\{\max_j \sum_{i: (i,j)\notin \cZ} 1 \right\}}{n}  \quad  \overset{\eqref{eq:sigma}}{=} \quad \frac{\left( \max_i L_i^2 \right) \xc}{n},
\end{eqnarray*}
and
\begin{eqnarray*}\frac{\max_j \left\{\sum_{i: (i,j)\notin \cZ} L_i^2 \right\}}{n} & \leq & \frac{\max_j \left\{\sum_{i=1}^n L_i^2 \right\}}{n} 
\quad = \quad \frac{\sum_{i=1}^n L_i^2 }{n}.
\end{eqnarray*}

This concludes the proof of the lemma.
\end{proof}

\subsection{Proof of Lemma~\ref{lem:contraction_on_R^d_i}}
 
\begin{proof}
Choose $i\in [n]$ and $x\in \R^d_i$. If $|\cJ_i|=d$, the statement turns into the standard contraction property of $\topk{K_i}$ on $\R^d$, and hence it holds\footnote{The standard contraction property says that $\norm{\topk{K_i}(x)-x}^2 \leq \left(1-\frac{K_i}{d}\right)\norm{x}^2$, for all $ x\in \R^d$.}. Assume therefore that $|\cJ_i|<d$. If $x=0$, inequality \eqref{eq:08y09fdd} clearly holds. Therefore, let us assume that $x\neq 0$. Notice that $\supp(x)\eqdef \{j \in [d] \;:\; x_j \neq 0\} \subseteq \cJ_i$. Hence, $$s\eqdef |\supp(x)| \leq |\cJ_i| <d.$$ 

Since neither the left nor the right hand side of \eqref{eq:08y09fdd} changes if we permute the coordinates of $x$, we can w.l.o.g.\ assume that $|x_1|\geq |x_2| \geq \cdots \geq |x_d|$. Notice that $|x_s|>0$ and that $|x_{s+1}|=\cdots=|x_{d}|=0$. Let $y=\topk{K_i}(x)$, and notice that $y_j=x_j$ for $1 \leq j\leq K_i$, $y_i=0$ for $j>K_i$, and $y_j=x_j=0$ for $ s+1 \leq j\leq d$. If $K_i\geq s$, then $y_j=x_j$ for all $j$, which means that the left-hand side in \eqref{eq:08y09fdd} is equal to zero. Therefore,  inequality \eqref{eq:08y09fdd} holds. If $K_i < s$, then 
\begin{eqnarray}\norm{y-x}^2 &=&  \sum_{j=1}^{K_i} \left( y_j-x_j\right)^2 +  \sum_{j=K_i+1}^{s} \left(y_j-x_j\right)^2 + \sum_{j=s+1}^{d} \left(y_j-x_j\right)^2 \notag \\ 
&=& \sum_{j=1}^{K_i} \left( x_j-x_j\right)^2 +  \sum_{j=K_i+1}^{s} \left(0-x_j\right)^2 + \sum_{j=s+1}^{d} \left(0-0\right)^2 \notag \\ 
&=& \sum_{j=K_i+1}^{s} x_j^2  .\label{eq:h8y0fd98y)_98yfd}
\end{eqnarray}
Because $x_1^2, x_2^2,\dots,x_d^2$ is non-increasing, we have
\[\frac{1}{s}\sum_{j=1}^{s} x_j^2 \geq \frac{1}{s-K_i}\sum_{j=K_i+1}^{s} x_j^2.\]
Plugging this estimate into \eqref{eq:h8y0fd98y)_98yfd}, we get
\[\norm{y-x}^2 \leq \frac{s-K_i}{s}\sum_{j=1}^{s} x_j^2  = \left(1-\frac{K_i}{s} \right)\norm{x}^2. \]
It remains to apply the bound $s \leq |\cJ_i|$ and use the identity $K_i = \min \{K_i,s\} =\min \{ K_i, |\cJ_i|\}$.
\end{proof}

\subsection{Helping lemma on orthogonality}

\begin{lemma}\label{lem:87t87fdu8df} Let $u_1,\dots,u_n \in \R^d$. Let us write $u_i = (u_{i1},\dots,u_{id}) \in \R^d$,  and define  sets $
	S_j \eqdef \{i \;:\; u_{ij} \neq 0\}$  for $j=1,2,\dots,d$.  
	Then
	\begin{equation}\label{eq:87t87fdu8df} \norm{\sum \limits_{i=1}^n u_i}^2 \leq  \left(\max_j |S_j| \right) \times
		\sum_{i=1}^n \norm{u_i}^2.\end{equation}
\end{lemma}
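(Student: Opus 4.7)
The plan is to expand the squared norm coordinate-wise and apply Cauchy--Schwarz on each coordinate separately, with the key observation that on coordinate $j$ only the terms indexed by $S_j$ contribute.

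Writing $u_i = (u_{i1},\dots,u_{id})$, I would first observe that
\[
\Bigl\|\sum_{i=1}^n u_i\Bigr\|^2 = \sum_{j=1}^d \Bigl(\sum_{i=1}^n u_{ij}\Bigr)^2 = \sum_{j=1}^d \Bigl(\sum_{i \in S_j} u_{ij}\Bigr)^2,
\]
where the second equality uses the definition of $S_j$: if $i \notin S_j$, then $u_{ij}=0$ and that term drops out. This is the step where the sparsity structure is exploited; everything afterwards is a routine bound.

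Next, I would apply the Cauchy--Schwarz inequality to each inner sum, viewing $\sum_{i \in S_j} u_{ij}$ as the inner product of the all-ones vector of length $|S_j|$ with the vector $(u_{ij})_{i \in S_j}$:
\[
\Bigl(\sum_{i \in S_j} u_{ij}\Bigr)^2 \leq |S_j| \sum_{i \in S_j} u_{ij}^2 \leq \Bigl(\max_k |S_k|\Bigr) \sum_{i=1}^n u_{ij}^2,
\]
where in the last step I enlarge the sum back to all $i \in [n]$, which is valid since the terms $u_{ij}$ with $i \notin S_j$ are zero. Summing over $j \in [d]$ and swapping the order of summation yields
\[
\Bigl\|\sum_{i=1}^n u_i\Bigr\|^2 \leq \Bigl(\max_j |S_j|\Bigr) \sum_{i=1}^n \sum_{j=1}^d u_{ij}^2 = \Bigl(\max_j |S_j|\Bigr) \sum_{i=1}^n \|u_i\|^2,
\]
which is the desired inequality \eqref{eq:87t87fdu8df}.

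There is no real obstacle here; the whole argument hinges on noticing that the coordinate-wise decomposition turns the global sparsity parameter $\max_j |S_j|$ into the Cauchy--Schwarz constant on each coordinate. I expect this lemma to be the workhorse for Lemma~\ref{lem:c_bound}: applying it with $u_i \in \R^d_i$ gives $S_j \subseteq \cI_j$, so $\max_j |S_j| \leq \max_j |\cI_j| = \xc$, which is precisely the bound stated there.
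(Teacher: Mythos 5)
Your proof is correct and follows essentially the same route as the paper's: the same coordinate-wise expansion $\|\sum_i u_i\|^2 = \sum_j (\sum_{i\in S_j} u_{ij})^2$, the same per-coordinate bound $(\sum_{i\in S_j} u_{ij})^2 \le |S_j|\sum_{i\in S_j} u_{ij}^2$ (which the paper labels Jensen's inequality and you label Cauchy--Schwarz --- the same estimate), and the same swap of summation order to finish. Your closing remark about how this feeds into Lemma~\ref{lem:c_bound} via $S_j \subseteq \cI_j$ also matches the paper exactly.
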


\iffalse
\peter{An idea for an alternative statement:
\begin{eqnarray*}\norm{\sum_{i=1}^n u_i}^2 = \sum_{j=1}^d \left(\sum_{i=1}^n u_{ij} \right)^2= \sum_{j=1}^d \left(\sum_{i\in S_j} u_{ij} \right)^2 \leq \sum_{j=1}^d  |S_j | \sum_{i\in S_j} u_{ij}^2 &=& \sum_{j=1}^d  |S_j | \sum_{i=1}^n u_{ij}^2 \\
&=& \sum_{j=1}^d \sum_{i=1}^n  |S_j |  u_{ij}^2 \\
&=&\sum_{i=1}^n \sum_{j=1}^d   |S_j |  u_{ij}^2 \\
&=& \sum_{i=1}^n \norm{u_i}_w^2,
\end{eqnarray*}
where $w=(|S_1 |,\dots,|S_d |)$  and $\norm{u}_w^2 \eqdef \sum_{j=1}^d w_j u_j^2.$
}
\fi

\begin{proof}
First, we rewrite the left-hand side of \eqref{eq:87t87fdu8df} into the form
\begin{equation}\label{eq:807tyhfdf}
\norm{\sum_{i=1}^n u_i}^2  = \sum_{j=1}^d \left(\sum_{i=1}^n u_{ij} \right)^2  = \sum_{j=1}^d \left(\sum_{i\in S_j} u_{ij} \right)^2 .\end{equation}
Let $\xi\eqdef \max_j |S_j|$. Notice that by Jensen's inequality, we have
\begin{equation}\label{eq:y8yuhfd8fu}\left(\sum_{i\in S_j} u_{ij} \right)^2 \leq |S_j | \sum_{i\in S_j} u_{ij}^2 \leq \xi \sum_{i\in S_j} u_{ij}^2\end{equation}
for all $j \in [d]$. By combining the above observations, we can continue as follows:
\begin{eqnarray*}
\norm{\sum_{i=1}^n u_i}^2 \overset{\eqref{eq:807tyhfdf} + \eqref{eq:y8yuhfd8fu}}{ \leq}   
  \xi  \sum_{j=1}^d  \sum_{i\in S_j} u_{ij}^2 
=   \xi  \sum_{j=1}^d  \sum_{i=1}^n u_{ij}^2 
=   \xi  \sum_{i=1}^n  \sum_{j=1}^d u_{ij}^2 
=   \xi  \sum_{i=1}^n  \norm{u_i}^2 .
\end{eqnarray*}
\end{proof}

\subsection{Proof of Lemma~\ref{lem:c_bound}}

\begin{proof} Let $S_j\eqdef \{i \;:\; u_{ij} \neq 0\}$ for $j\in [d]$. Since 
$$S_j =\{i \;:\; u_{ij} \neq 0\}=\{i \;:\; u_{ij} = 0\}^c \subseteq \{i \;:\; (i,j) \in \cZ \}^c = \{i \;:\; (i,j) \notin \cZ \},$$ we have
\[ \max_j |S_j| \leq \max_j  |\{i \;:\; (i,j) \notin \cZ \}| \overset{\eqref{eq:sigma}}{=} \xc.\]
It remains to apply Lemma~\ref{lem:87t87fdu8df}.
\end{proof}

\subsection{Gradient estimate $g_i$ stays within its active subspace}
\begin{lemma} \label{lem:98y08fd} Choose any $i\in [n]$, $K_i\in [d]$ and $x\in \R^d$. If $g_i\in \R^d_i$, then  the vector
	\begin{equation}\label{eq:g_+_def}
	g_i^+ \eqdef g_i + \topk{K_i} (\nabla f_i(x) - g_i) 
	\end{equation}
	also belongs to $\R^d_i$.  
\end{lemma}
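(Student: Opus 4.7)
The plan is to verify directly that every coordinate $j$ with $(i,j)\in\cZ$ vanishes in $g_i^+$. The key observation is that $\R^d_i$ is a coordinate subspace of $\R^d$, so it is closed under addition, scalar multiplication, and, crucially, under the $\topk{K_i}$ operator, since that operator only zeros out coordinates and never creates a nonzero entry where there was a zero.

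First, I would note that $\nabla f_i(x)\in \R^d_i$ for every $x\in \R^d$. This follows immediately from the definition of $\cZ$: if $(i,j)\in \cZ$, then $f_i$ does not depend on $x_j$, so $[\nabla f_i(x)]_j=0$. Combined with the hypothesis $g_i\in \R^d_i$, this gives $\nabla f_i(x)-g_i \in \R^d_i$, because $\R^d_i$ is the intersection of hyperplanes $\{u\st u_j=0\}$ over $j$ with $(i,j)\in\cZ$, hence a linear subspace.

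Second, I would verify that $\topk{K_i}$ maps $\R^d_i$ into itself. By definition, $\topk{K_i}(v)$ coincides with $v$ on the $K_i$ coordinates of largest magnitude and is zero elsewhere. In particular, for any coordinate $j$ with $v_j=0$ we have $[\topk{K_i}(v)]_j=0$ (either $j$ is selected, in which case the entry equals $v_j=0$, or it is not, in which case it is set to zero). Applying this with $v=\nabla f_i(x)-g_i\in \R^d_i$ shows $\topk{K_i}(\nabla f_i(x)-g_i)\in \R^d_i$.

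Finally, since $\R^d_i$ is closed under addition, $g_i^+ = g_i + \topk{K_i}(\nabla f_i(x)-g_i)$ lies in $\R^d_i$, which is exactly the claim. There is no genuine obstacle here; the proof is essentially a bookkeeping exercise on coordinate supports, and its value is to justify the inductive use of $g_i^t\in\R^d_i$ in later results such as Lemma~\ref{lem:98y89fhd_8fd}.
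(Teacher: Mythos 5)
Your proof is correct and follows essentially the same route as the paper's: both arguments reduce to observing that $\nabla f_i(x)-g_i$ vanishes on the coordinates indexed by $\{j : (i,j)\in\cZ\}$, that $\topk{K_i}$ never turns a zero entry into a nonzero one, and that $\R^d_i$ is closed under addition. No gaps.
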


\begin{proof} 
Let us note that the complement set of $\cJ_i$, as indicated by the definition of $\cJ_i$ in Equation~\eqref{eq:cI_cJ_defs}, is given by
$$
\cJ_i^\complement = \{j \in [d] \;:\; (i, j) \in \cZ\}.
$$
We define $l_i = |\cJ_i^\complement|$, which represents the cardinality of $\cJ_i^\complement$. It is now observed that the lemma's statement is equivalent to demonstrating that $[g_i^+]_{\cJ_i^\complement}= \bold{0}^{l_i}$. In order to prove this, we further note that
\begin{equation}\label{eq:g_+_aux1}
[g_i^+]_{\cJ_i^\complement} \overset{\eqref{eq:g_+_def}}{=}  [g_i + \topk{K_i} (\nabla f_i(x) - g_i)]_{\cJ_i^\complement} = [g_i]_{\cJ_i^\complement} + [\topk{K_i} (\nabla f_i(x) - g_i)]_{\cJ_i^\complement},
\end{equation}
where the last equality follows from basic arithmetic principles. Since $g_i \in \mathbb{R}^d_i$, it holds that $[g_i]_{\cJ_i^\complement} = \bold{0}^{l_i}$. Regarding the argument of $\topk{K_i}$, we can express it as
\begin{align*}
[\nabla f_i(x) - g_i]_{\cJ_i^\complement} = [\nabla f_i(x)]_{\cJ_i^\complement} -[g_i]_{\cJ_i^\complement} = \bold{0}^{l_i} - \bold{0}^{l_i} = \bold{0}^{l_i},
\end{align*}
since $\nabla f_i(x) \in \mathbb{R}^d_i $ for any $x \in \mathbb{R}^d$. It remains to recall that the $\topk{K_i}$ operator either retains the element of a vector or maps it to zero. Thus, the zero sub-vector $[\nabla f_i(x) - g_i]_{\cJ_i^\complement}$ is mapped to a zero sub-vector. Consequently,
\begin{equation*}
[g_i^+]_{\cJ_i^\complement} \overset{\eqref{eq:g_+_aux1}}{=} [g_i]_{\cJ_i^\complement} + [\topk{K_i} (\nabla f_i(x) - g_i)]_{\cJ_i^\complement} = \bold{0}^{l_i} + \bold{0}^{l_i} = \bold{0}^{l_i},
\end{equation*}
what concludes the proof.
\end{proof}

\subsection{Proof of Lemma~\ref{lem:98y89fhd_8fd}}
\begin{proof}
By assumption, $g_i^0\in \R^d_i$. By repeatedly applying Lemma~\ref{lem:98y08fd}, we conclude that $g_i^{t} \in \R^d_i$ for all $t\geq 0$. Since $ \nabla f_i(x^t)$ belongs to $\R^d_i$, so does the vector $u_i^t \eqdef g_i^t - \nabla f_i(x^t)$. We now have 
\[  \norm{g^t -\nabla f(x^t)}^2 =  \norm{\frac{1}{n}\sum_{i=1}^n \left(g_i^t -\nabla f_i(x^t)\right)}^2 =\frac{1}{n^2}\norm{\sum_{i=1}^n u_i^t}^2 \leq  \frac{\xc}{n^2}\sum_{i=1}^n \norm{u_i^t}^2,\]
where in the last step we applied Lemma~\ref{lem:c_bound}.
\end{proof}

\subsection{New descent lemma}

\begin{lemma}\label{lm:descent_lemma_new}
	Let Assumption~\ref{as:smooth} hold. Furthermore, let $g_i^0 \in \R^d_i$ for all $i=1,2,\dots,n$. Let $$x^{t+1} = x^t - \gamma g^t$$ be the \algname{EF21}  method, where $g^t = \frac{1}{n}\sum_{i=1}^n g_i^t$, and $\gamma>0$ is the stepsize. Then \begin{equation}\label{eq:descent_lemma_aux3}
		f(x^{t+1}) \leq f(x^t) - \frac{\gamma}{2} \norm{\nabla f(x^t)}^2 - \left(\frac{1}{2\gamma} - \frac{L}{2} \right)\norm{x^{t+1} - x^t}^2 + \frac{\gamma}{2} \frac{\xc}{n} G^t.
	\end{equation}
\end{lemma}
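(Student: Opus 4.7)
The plan is to follow the classical descent-lemma template from the analysis of EF21, but with one crucial substitution in the final step: replacing the standard bound $\|g^t - \nabla f(x^t)\|^2 \le G^t$ by the sharper Lemma~\ref{lem:98y89fhd_8fd}, which introduces the $\xc/n$ factor. Since the extra hypothesis $g_i^0 \in \R^d_i$ in this lemma is precisely what is assumed, and combined with Lemma~\ref{lem:98y08fd} (inductively) guarantees $g_i^t \in \R^d_i$ for all $t$, Lemma~\ref{lem:98y89fhd_8fd} is directly applicable.

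Concretely, the first step is to invoke $L$-smoothness of $f$ (Assumption~\ref{as:smooth}) at the pair $(x^t, x^{t+1})$ to obtain
\[
f(x^{t+1}) \le f(x^t) + \langle \nabla f(x^t), x^{t+1} - x^t\rangle + \tfrac{L}{2}\|x^{t+1}-x^t\|^2.
\]
Next, I would substitute $x^{t+1} - x^t = -\gamma g^t$ and apply the standard polarization identity $\langle a, b\rangle = \tfrac{1}{2}(\|a\|^2 + \|b\|^2 - \|a-b\|^2)$ to the inner product term, which produces
\[
-\gamma\langle \nabla f(x^t), g^t\rangle = -\tfrac{\gamma}{2}\|\nabla f(x^t)\|^2 - \tfrac{\gamma}{2}\|g^t\|^2 + \tfrac{\gamma}{2}\|\nabla f(x^t)-g^t\|^2.
\]
Using $\gamma\|g^t\|^2 = \tfrac{1}{\gamma}\|x^{t+1}-x^t\|^2$, I combine everything to reach the intermediate bound
\[
f(x^{t+1}) \le f(x^t) - \tfrac{\gamma}{2}\|\nabla f(x^t)\|^2 - \bigl(\tfrac{1}{2\gamma} - \tfrac{L}{2}\bigr)\|x^{t+1}-x^t\|^2 + \tfrac{\gamma}{2}\|\nabla f(x^t)-g^t\|^2.
\]

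The only remaining step, and the place where the paper's new contribution enters, is to estimate the final term. Here I would apply Lemma~\ref{lem:98y89fhd_8fd} to conclude $\|\nabla f(x^t) - g^t\|^2 \le \tfrac{\xc}{n} G^t$, which substitutes directly to yield~\eqref{eq:descent_lemma_aux3}. The only subtlety worth flagging is that applying Lemma~\ref{lem:98y89fhd_8fd} requires $g_i^t \in \R^d_i$ for every $t$, not just at $t=0$; this is handled by an easy induction on $t$ using Lemma~\ref{lem:98y08fd}, which shows that the update $g_i^{t+1} = g_i^t + \topk{K_i}(\nabla f_i(x^{t+1}) - g_i^t)$ preserves membership in $\R^d_i$. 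Apart from this bookkeeping, the proof is a direct calculation with no real obstacle.
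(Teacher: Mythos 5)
Your proof is correct and follows essentially the same route as the paper: the paper simply cites the standard descent inequality (its Fact~\ref{lm:descent_lemma_standard}, which is exactly the bound you rederive via smoothness and the polarization identity) and then plugs in Lemma~\ref{lem:98y89fhd_8fd}. Your remark about propagating $g_i^t \in \R^d_i$ by induction via Lemma~\ref{lem:98y08fd} is the same bookkeeping the paper performs inside the proof of Lemma~\ref{lem:98y89fhd_8fd}.
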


We start with a standard result~\citep{PAGE2021}.
\begin{fact}\label{lm:descent_lemma_standard}
Suppose Assumption~\ref{as:smooth} holds, and let $x^{t+1} = x^t - \gamma g^t$, where $g^t \in \R^d$ is any vector, and $\gamma>0$ any scalar. Then \begin{equation}\label{eq:descent_lemma_aux3}
f(x^{t+1}) \leq f(x^t) - \frac{\gamma}{2} \norm{\nabla f(x^t)}^2 - \left(\frac{1}{2\gamma} - \frac{L}{2} \right)\norm{x^{t+1} - x^t}^2 + \frac{\gamma}{2} \norm{g^t - \nabla f(x^t)}^2.
\end{equation}
\end{fact}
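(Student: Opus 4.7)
The plan is to derive the bound as pure algebra on top of the quadratic upper bound that $L$-smoothness provides. Assumption~\ref{as:smooth} yields the classical inequality $f(x^{t+1}) \leq f(x^t) + \langle \nabla f(x^t), x^{t+1}-x^t\rangle + \tfrac{L}{2}\norm{x^{t+1}-x^t}^2$, and this is the only property of $f$ that will be invoked. Note in particular that nothing in the hypothesis of the Fact involves any local vectors $g_i^t$, the subspaces $\R^d_i$, or the averaging $g^t = \avein g_i^t$: the statement is about an arbitrary direction $g^t \in \R^d$, so I will not need Lemma~\ref{lem:98y08fd}, Lemma~\ref{lem:c_bound}, or any sparsity structure.

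Next I would substitute the update $x^{t+1}-x^t = -\gamma g^t$ to rewrite the cross term as $-\gamma\,\langle \nabla f(x^t), g^t\rangle$, and then apply the polarization identity $2\langle a,b\rangle = \norm{a}^2 + \norm{b}^2 - \norm{a-b}^2$ with $a=\nabla f(x^t)$ and $b=g^t$. This turns the cross term into $-\tfrac{\gamma}{2}\norm{\nabla f(x^t)}^2 - \tfrac{\gamma}{2}\norm{g^t}^2 + \tfrac{\gamma}{2}\norm{g^t-\nabla f(x^t)}^2$, which already exposes the three squared-norm terms that appear on the right-hand side of the target inequality.

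The last step is to convert $\norm{g^t}^2$ into $\norm{x^{t+1}-x^t}^2$ by using the update rule once more: $\norm{x^{t+1}-x^t}^2 = \gamma^2 \norm{g^t}^2$, so $-\tfrac{\gamma}{2}\norm{g^t}^2 = -\tfrac{1}{2\gamma}\norm{x^{t+1}-x^t}^2$. Combining this with the $+\tfrac{L}{2}\norm{x^{t+1}-x^t}^2$ term contributed by smoothness produces the single coefficient $-(\tfrac{1}{2\gamma}-\tfrac{L}{2})$ in front of $\norm{x^{t+1}-x^t}^2$, and gathering terms yields exactly the stated bound.

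I do not anticipate any genuine obstacle: the proof is one display of the smoothness upper bound, one substitution, one application of polarization, and one bookkeeping step. The only fragile point is that polarization (rather than Young's inequality with a free parameter) must be used, so that the coefficient of $\norm{g^t-\nabla f(x^t)}^2$ comes out as $\tfrac{\gamma}{2}$ with no slack; any looser inequality would spoil the clean form of the descent lemma that is needed downstream to accommodate the sparsity refinement $\frac{\xc}{n} G^t$ in Lemma~\ref{lm:descent_lemma_new}.
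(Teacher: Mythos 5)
Your proof is correct and is exactly the standard argument: the paper itself does not prove this Fact but simply cites it as a known result from the PAGE paper, and the cited proof proceeds precisely as you describe (quadratic upper bound from $L$-smoothness, substitution of $x^{t+1}-x^t=-\gamma g^t$, polarization of the cross term, and conversion of $\norm{g^t}^2$ into $\frac{1}{\gamma^2}\norm{x^{t+1}-x^t}^2$). Your remark that the exact polarization identity, rather than a lossy Young-type bound, is what yields the tight $\frac{\gamma}{2}$ coefficient is also on point.
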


\begin{proof}Lemma~\ref{lm:descent_lemma_new} follows by plugging the inequality from Lemma~\ref{lem:98y89fhd_8fd} into the inequality described by Fact~\ref{lm:descent_lemma_standard}.\end{proof}

\subsection{Bounding the gradient estimate error}

\begin{lemma}\label{lm:3pc-standard} The iterates of the \algname{EF21} method satisfy
	\begin{equation}\label{eq:3pc_ef21-st}
		G_i^{t+1} \leq (1 - \theta) G_i^t + \beta \norm{\nabla f_i(x^{t+1}) - \nabla f_i(x^t)}^2,
	\end{equation}
	and \begin{equation}\label{eq:g_t_parallel}
		G^{t+1} \leq (1 - \theta) G^t + \beta \Lplsq \norm{x^{t+1}-x^t}^2,
	\end{equation}
	where $\theta \eqdef 1 - \sqrt{1 - \alpha}$,   $\beta\eqdef \frac{1-\alpha}{1 - \sqrt{1-\alpha}}$, $\alpha \eqdef \min_i \alpha_i $ and $\alpha_i\eqdef \frac{\min\{K_i,|\cJ_i|\}}{|\cJ_i|}$. \end{lemma}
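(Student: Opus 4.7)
\textbf{Proof plan for Lemma~\ref{lm:3pc-standard}.}

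The plan is to unfold the definition of $G_i^{t+1}$, apply the contraction bound from Lemma~\ref{lem:contraction_on_R^d_i}, and then split the resulting term with a carefully tuned Young's inequality. Concretely, using the update rule in Line~\ref{alg_line:g_update_step} of Algorithm~\ref{alg:ef21},
\[
G_i^{t+1} \;=\; \norm{g_i^{t+1} - \nabla f_i(x^{t+1})}^2 \;=\; \norm{\topk{K_i}\!\left(\nabla f_i(x^{t+1}) - g_i^t\right) \;-\; \left(\nabla f_i(x^{t+1}) - g_i^t\right)}^2.
\]
The first step is to verify that the argument $u_i \eqdef \nabla f_i(x^{t+1}) - g_i^t$ lies in $\R^d_i$. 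Indeed, $\nabla f_i(x^{t+1}) \in \R^d_i$ by definition of $\R^d_i$, and Lemma~\ref{lem:98y08fd} (applied inductively starting from the assumed $g_i^0 \in \R^d_i$) gives $g_i^t \in \R^d_i$. Hence Lemma~\ref{lem:contraction_on_R^d_i} applies and yields
\[
G_i^{t+1} \;\leq\; (1-\alpha_i)\,\norm{\nabla f_i(x^{t+1}) - g_i^t}^2 \;\leq\; (1-\alpha)\,\norm{\nabla f_i(x^{t+1}) - g_i^t}^2,
\]
using that $\alpha = \min_i \alpha_i$.

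Next I would insert $\pm \nabla f_i(x^t)$ inside the norm and apply Young's inequality with a free parameter $s>0$:
\[
\norm{\nabla f_i(x^{t+1}) - g_i^t}^2 \;\leq\; (1+s)\,G_i^t \;+\; \Bigl(1+\tfrac{1}{s}\Bigr)\norm{\nabla f_i(x^{t+1}) - \nabla f_i(x^t)}^2.
\]
Choosing $s$ so that $(1-\alpha)(1+s) = \sqrt{1-\alpha}$, i.e.\ $s = \tfrac{1}{\sqrt{1-\alpha}} - 1$, makes the coefficient in front of $G_i^t$ equal to $1-\theta = \sqrt{1-\alpha}$, and a short calculation gives $(1-\alpha)(1+\tfrac{1}{s}) = \tfrac{1-\alpha}{1-\sqrt{1-\alpha}} = \beta$. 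This yields the first inequality~\eqref{eq:3pc_ef21-st}. The only mildly subtle point here is the choice of $s$; once it is made, the identities are routine.

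Finally, to obtain~\eqref{eq:g_t_parallel}, I would average~\eqref{eq:3pc_ef21-st} over $i=1,\dots,n$ and invoke Assumption~\ref{as:L+}:
\[
G^{t+1} \;=\; \tfrac{1}{n}\sum_{i=1}^n G_i^{t+1} \;\leq\; (1-\theta)\,G^t \;+\; \beta\,\tfrac{1}{n}\sum_{i=1}^n \norm{\nabla f_i(x^{t+1}) - \nabla f_i(x^t)}^2 \;\leq\; (1-\theta)\,G^t \;+\; \beta\,\Lplsq\,\norm{x^{t+1}-x^t}^2,
\]
which is the claim. I do not foresee a real obstacle: the only non-mechanical step is the optimization of the Young's-inequality parameter $s$, and the key structural input is that Lemma~\ref{lem:contraction_on_R^d_i} gives the sharper contraction factor $1-\alpha_i = 1 - \min\{K_i,|\cJ_i|\}/|\cJ_i|$ on the active subspace rather than the weaker $1 - K_i/d$.
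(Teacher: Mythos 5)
Your proposal is correct and follows essentially the same route as the paper's proof: unfold $G_i^{t+1}$ via the update rule, apply the refined contraction bound of Lemma~\ref{lem:contraction_on_R^d_i}, then Young's inequality with the parameter $\zeta = \tfrac{1}{\sqrt{1-\alpha}}-1$, and finally average over $i$ and invoke Assumption~\ref{as:L+}. Your explicit verification that $\nabla f_i(x^{t+1}) - g_i^t \in \R^d_i$ (via Lemma~\ref{lem:98y08fd} applied inductively) is a welcome bit of care that the paper's proof leaves implicit.
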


\begin{proof}

\begin{eqnarray*}
G_i^{t+1} & \overset{\eqref{eq:grad_variation_def}}{=} &\norm{g_i^{t+1} - \nabla f_i(x^{t+1})}^2 \\
&\overset{\text{Step 7 of Alg}~\ref{alg:ef21}}{=} & \norm{g_i^t + \topk{K_i}(\nabla f_i(x^{t+1}) - g_i^t) - \nabla f_i(x^{t+1})}^2 \\
&= & \norm{\topk{K_i}(\nabla f_i(x^{t+1}) - g_i^t) - (\nabla f_i(x^{t+1})-g_i^t )}^2 \\
&\overset{\eqref{eq:08y09fdd}}{\leq} & (1-\alpha_i) \norm{\nabla f_i(x^{t+1}) - g_i^{t}  }^2 \\
&\leq & (1-\alpha) \norm{\nabla f_i(x^{t+1}) - g_i^{t}  }^2 \\
&=& (1-\alpha) \norm{\nabla f_i(x^{t}) - g_i^{t} + \nabla f_i(x^{t+1}) - \nabla f_i(x^{t}) }^2 \\
&\leq& (1-\alpha) (1+\zeta) \norm{\nabla f_i(x^{t}) - g_i^{t}}^2 +  (1-\alpha) (1+\zeta^{-1})\norm{\nabla f_i(x^{t+1}) - \nabla f_i(x^{t}) }^2,
\end{eqnarray*}
where $\zeta>0$ is arbitrary. By choosing $\zeta = \frac{1}{\sqrt{1-\alpha}}-1$, we obtain \eqref{eq:3pc_ef21-st}.
%\[(1-\alpha)(1+\zeta) = \sqrt{1-\alpha}, \qquad (1-\alpha)(1+\zeta^{-1}) = \frac{1-\alpha}{1 - \sqrt{1-\alpha}}.\]
%
%\[\zeta = \frac{1}{\sqrt{1-\alpha}}-1\]
%\[1+\zeta^{-1} = 1+ \frac{\sqrt{1-\alpha}}{1-\sqrt{1-\alpha}} = \frac{1}{1-\sqrt{1-\alpha}}\]
To establish \eqref{eq:g_t_parallel}, we only need to observe that
\begin{eqnarray*}
G^{t+1} &\overset{\eqref{eq:grad_variation_def}}{=}& \avein G_i^{t+1} \\
& \overset{\eqref{eq:3pc_ef21-st}}{\leq} &\avein \left((1-\theta)G_i^t + \beta \norm{\nabla f_i(x^{t+1}) - \nabla f_i(x^t)}^2\right)\\
& =& (1-\theta) \avein G_i^t + \beta  \avein\norm{\nabla f_i(x^{t+1}) - \nabla f_i(x^t)}^2\\
&\overset{\eqref{eq:L+}+\eqref{eq:grad_variation_def}}{\leq} & (1-\theta) G^t + \beta \Lplsq \norm{x^{t+1} - x^t}^2.
\end{eqnarray*}
\end{proof}

\subsection{Auxiliary result on connection between $\sqrt{\frac{\beta}{\theta}}$ and $\alpha$}

\begin{lemma}
Let $\theta \eqdef 1 - \sqrt{1 - \alpha}$ and $\beta\eqdef \frac{1-\alpha}{1 - \sqrt{1-\alpha}}$, where $\alpha \in (0, 1]$. Then,
\[\sqrt{\frac{\beta}{\theta}}  = \frac{\sqrt{1-\alpha} + 1-\alpha}{\alpha}.\]
\end{lemma}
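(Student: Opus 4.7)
The identity is purely algebraic, so the plan is a direct computation in two moves: first simplify $\beta/\theta$, then rationalize the resulting fraction.

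First I would substitute the definitions to get
\[
\frac{\beta}{\theta} \;=\; \frac{(1-\alpha)/(1-\sqrt{1-\alpha})}{1-\sqrt{1-\alpha}} \;=\; \frac{1-\alpha}{\bigl(1-\sqrt{1-\alpha}\bigr)^2}.
\]
Since $\alpha\in(0,1]$ implies $1-\sqrt{1-\alpha}>0$, taking a square root is unambiguous and yields
\[
\sqrt{\frac{\beta}{\theta}} \;=\; \frac{\sqrt{1-\alpha}}{1-\sqrt{1-\alpha}}.
\]

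Next I would rationalize the denominator by multiplying the numerator and denominator by $1+\sqrt{1-\alpha}$, using $(1-\sqrt{1-\alpha})(1+\sqrt{1-\alpha}) = 1-(1-\alpha) = \alpha$:
\[
\frac{\sqrt{1-\alpha}}{1-\sqrt{1-\alpha}} \;=\; \frac{\sqrt{1-\alpha}\,(1+\sqrt{1-\alpha})}{\alpha} \;=\; \frac{\sqrt{1-\alpha}+(1-\alpha)}{\alpha},
\]
which is exactly the claimed identity.

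The only subtlety worth flagging is the edge case $\alpha=1$, where $\theta=1$ and $\beta=0$, so both sides equal $0$ and the computation remains valid (no division by zero appears at any step because $\theta>0$ throughout). There is no conceptual obstacle here; the proof is one substitution followed by one rationalization.
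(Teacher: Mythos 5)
Your proof is correct and follows exactly the same route as the paper's: substitute the definitions, take the square root, and rationalize the denominator using $(1-\sqrt{1-\alpha})(1+\sqrt{1-\alpha})=\alpha$. Your explicit check of the $\alpha=1$ edge case is a small, harmless addition the paper omits.
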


\begin{proof}
It immediately holds from the following arithmetical operations:
	\[\sqrt{\frac{\beta}{\theta}} = \sqrt{\frac{1-\alpha}{(1 - \sqrt{1-\alpha})^2}} = \frac{\sqrt{1-\alpha}}{1 - \sqrt{1-\alpha}} = \frac{\sqrt{1-\alpha}(1 + \sqrt{1-\alpha})}{(1 - \sqrt{1-\alpha})(1 + \sqrt{1-\alpha})} = \frac{\sqrt{1-\alpha} + 1-\alpha}{\alpha}.\]
\end{proof}

\subsection{Convergence rate in the fully separable case}

If $\xc=1$, we can express the problem~\eqref{eq:main_problem} in a more concise form:
\begin{equation}\label{eq:fullsep_problem}
	\min_{x \in \mathbb{R}^d} \left[ f(x) = \frac{1}{n} \sum_{i=1}^n f_i(x_i)\right],
\end{equation}
where each function $f_i$ has a support of size $d_i$, and the sum of all $d_i$ values equals $d$.
We will now establish the following claim.

\begin{lemma}\label{lm:L_full_sep}
Let~\Cref{as:L_i} hold. Then~\Cref{as:smooth} holds for~\eqref{eq:fullsep_problem}  with $L = \frac{\max_i L_i}{n}$. 
\end{lemma}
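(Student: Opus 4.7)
The plan is to use the fact that when $\xc=1$, the coordinate blocks on which each $f_i$ depends are mutually disjoint, so the gradient $\nabla f$ decomposes as a block-wise concatenation. This lets us control $\|\nabla f(x)-\nabla f(y)\|$ by summing contributions coming from one client at a time, and then factor out the maximum Lipschitz constant.

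Concretely, I would start by writing any $x,y\in\R^d$ in block form $x=(x_1,\ldots,x_n)$ and $y=(y_1,\ldots,y_n)$ with $x_i,y_i\in\R^{d_i}$, matching the disjoint supports implied by $\xc=1$. From~\eqref{eq:fullsep_problem} one gets $\nabla f(x)=\tfrac{1}{n}(\nabla f_1(x_1),\ldots,\nabla f_n(x_n))$ and similarly for $y$, where the gradient components are placed into their respective coordinate blocks and are zero elsewhere (this zero-padding is exactly the statement that $\nabla f_i(x)\in\R^d_i$ from~\eqref{eq:R_d_n_definiton}). Because the blocks are disjoint, we obtain the identity
\begin{equation*}
\|\nabla f(x)-\nabla f(y)\|^2 \;=\; \frac{1}{n^2}\sum_{i=1}^n \|\nabla f_i(x_i)-\nabla f_i(y_i)\|^2 .
\end{equation*}

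Next, I would apply Assumption~\ref{as:L_i} to each block, giving $\|\nabla f_i(x_i)-\nabla f_i(y_i)\|^2 \leq L_i^2 \|x_i-y_i\|^2 \leq (\max_i L_i)^2\|x_i-y_i\|^2$. Summing and using the disjoint-block identity $\sum_{i=1}^n\|x_i-y_i\|^2=\|x-y\|^2$ would then yield
\begin{equation*}
\|\nabla f(x)-\nabla f(y)\|^2 \;\leq\; \frac{(\max_i L_i)^2}{n^2}\,\|x-y\|^2,
\end{equation*}
which is precisely Assumption~\ref{as:smooth} with $L=\max_i L_i/n$.

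The proof is essentially routine; the only point that deserves care is justifying the block decomposition of $\nabla f$ itself, i.e.\ making explicit that in the fully separable regime ($\xc=1$) a partial derivative $\partial f/\partial x_j$ equals $\tfrac{1}{n}\,\partial f_{i(j)}/\partial x_j$ for the unique client $i(j)$ owning coordinate $j$, and is otherwise zero. Once that is observed, no further work is needed; in particular, one does not have to invoke any of the sparsity lemmas (Lemma~\ref{lem:M-bound-via-c} or Lemma~\ref{lem:c_bound}) because the decoupling into $n$ independent blocks is exact rather than approximate.
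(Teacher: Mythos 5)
Your proof is correct, but it takes a genuinely different route from the paper's. The paper argues at second order: it assumes (for convenience) that $f$ is twice differentiable, observes that the Hessian is block-diagonal, $\nabla^2 f(x) = \frac{1}{n}\sum_i \nabla^2 f_i(x_i)$, and bounds the operator norm via
\begin{equation*}
v^\top \nabla^2 f(x)\, v \;=\; \frac{1}{n}\sum_{i=1}^n v_i^\top \nabla^2 f_i(x_i)\, v_i \;\leq\; \sum_{i=1}^n \norm{v_i}^2\,\frac{L_i}{n} \;\leq\; \frac{\max_i L_i}{n},
\end{equation*}
reading the last step as a convex combination since $\sum_i\norm{v_i}^2=1$. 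You instead work at first order, using the exact Pythagorean decomposition $\norm{\nabla f(x)-\nabla f(y)}^2 = \frac{1}{n^2}\sum_i\norm{\nabla f_i(x_i)-\nabla f_i(y_i)}^2$ over the disjoint blocks and applying $L_i$-smoothness block by block. Both routes exploit the same structural fact (disjoint supports make the cross terms vanish) and arrive at the same constant $L=\max_i L_i/n$; what your version buys is that it does not need the twice-differentiability assumption the paper explicitly invokes ``for clarity,'' so it is slightly more elementary and marginally more general. One small point worth making explicit in your write-up: Assumption~\ref{as:L_i} states smoothness of $f_i$ as a function on all of $\R^d$, so the blockwise inequality $\norm{\nabla f_i(x_i)-\nabla f_i(y_i)}\leq L_i\norm{x_i-y_i}$ you use requires the observation that the inactive coordinates can be set equal (or to zero) without changing $\nabla f_i$ --- this is exactly the content of Lemma~\ref{lem:L_i-implication}, which you could cite rather than re-derive.
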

\begin{proof}
For the sake of clarity and ease of presentation, we make the assumption that function $f$ is twice differentiable. We observe that the Hessian matrix of $f(x)$ is a block-separable matrix, given by $\nabla^2 f(x)~=~\avein \nabla^2 f_i(x_i)$ for any $x \in \RR^d$. To establish the main claim of the lemma, we aim to show that $\norm{\nabla^2 f(x)} \leq \frac{\max_i L_i}{n}$~\citep{nesterov2018lectures}.

Let $v \in \mathbb{R}^d$ such that $\norm{v}^2 = 1$. We represent the vector $v = [v_1^\top v_2^\top \dots v_n^\top]^\top$. It follows that
$$
v^\top \nabla^2 f(x) v = \avein v_i^\top \nabla^2 f_i(x_i) v_i,
$$
due to the block-separable structure of the Hessian matrix.  We proceed to
\begin{equation}\label{eq:block_sep_matrix}
v^\top \nabla^2 f(x) v = \avein v_i^\top \nabla^2 f_i(x_i) v_i \leq \avein \norm{\nabla^2 f_i(x_i)} \norm{v_i}^2  \leq \avein L_i \norm{v_i}^2  = \sumin \norm{v_i}^2 \cdot \frac{L_i}{n},
\end{equation}
where the first inequality follows from the definition of the operator norm, and the second inequality follows from the second-order definition of the function smoothness \citep{nesterov2018lectures}.

 Since $1 = \norm{v}^2 = \sumin \norm{v_i}^2$, we can interpret~\eqref{eq:block_sep_matrix} as a convex combination of $n$ positive numbers. As the convex combination never exceeds the value of its maximum term, we finally obtain
\begin{equation*}
v^\top \nabla^2 f(x) v \overset{\eqref{eq:block_sep_matrix}}{\leq} \sumin \norm{v_i}^2 \cdot \frac{L_i}{n} \leq \frac{\max_i L_i}{n},
\end{equation*}
which concludes the proof.
\end{proof}

We observe that applying Algorithm \ref{alg:ef21} to the problem \eqref{eq:fullsep_problem} is equivalent to performing $n$ independent runs of Algorithm \ref{alg:ef21} in a single-node scenario, where each run pertains to its own $i$-th block. Each individual run requires $\cO\left(\frac{L_i (1 + \sqrt{\frac{\beta}{\theta}})}{\delta}\right)$ iterations to achieve an accuracy of $\delta$ \citep{EF21}. Consequently, when considering all $n$ runs, the total number of iterations $T'$ required to attain $\delta$-accuracy by each client is given by $T' = \cO\left(\frac{\max_i L_i (1 + \sqrt{\frac{\beta}{\theta}})}{\delta}\right)$ in a parallel computing setting.

Since the functions $f_i$ are effectively independent of each other, after $T'$ iterations, the function $f(x)$ satisfies
$$
\norm{\nabla f(x^{T'})}^2 = \norm{\avein f_i(x_i^{T'})}^2 =   \frac{1}{n^2} \sumin \norm{\nabla f_i(x^{T'})}^2 \leq \frac{\delta}{n} = \varepsilon.
$$
Consequently, to achieve an accuracy of $\varepsilon$ for the function $f(x)$, each parallel run should be executed for $$\cO\left(\frac{\max_i L_i (1 + \sqrt{\frac{\beta}{\theta}})}{\varepsilon n}\right) \overset{\text{\Cref{lm:L_full_sep}}}{=} \cO\left(\frac{L + \frac{\max_i L_i}{n} \sqrt{\frac{\beta}{\theta}}}{\varepsilon}\right)$$ iterations, which aligns with the convergence result presented in Theorem~\ref{thm:convergence_separate}.

\newpage
\section{Proof of Theorem~\ref{thm:convergence_separate}}
\begin{proof}
Define the Lyapunov function\begin{equation}\label{eq:lyapunov_proof_separate}
\Psi^t \eqdef f(x^t) - f^\ast + \frac{\gamma \xc}{2 \theta n} G^t.
\end{equation}

By straightforward arguments, we get
\begin{eqnarray*}
\Psi^{t+1} &\overset{\eqref{eq:lyapunov_proof_separate}}{=} & f(x^{t+1}) - f^\ast + \frac{\gamma}{2\theta n} G^{t+1} \\
& \overset{\eqref{lm:descent_lemma_new}}{\leq}& f(x^t) - f^\ast -  \frac{\gamma}{2}\norm{\nabla f(x^t)}^2 - \left(\frac{1}{2\gamma} - \frac{L}{2}\right) \norm{x^{t+1}-x^t}^2 + \frac{\gamma \xc}{2n} G^t + \frac{\gamma \xc}{2 \theta n} G^{t+1} \\
& \overset{\eqref{eq:g_t_parallel}}{\leq} & f(x^t) - f^\ast -  \frac{\gamma}{2}\norm{\nabla f(x^t)}^2 - \left(\frac{1}{2\gamma} - \frac{L}{2}\right) \norm{x^{t+1}-x^t}^2 + \frac{\gamma \xc}{2n} G^t  \\
&& \qquad + \frac{\gamma \xc}{2\theta n} \left( (1 - \theta) G^t + \beta \Lplsq \norm{x^{t+1}-x^t}^2\right)\\
&=& f(x^t) - f^\ast +  \frac{\gamma}{2}\left(\frac{\xc}{n} + (1-\theta)\frac{\xc}{\theta n}\right)  G^t - \frac{\gamma}{2}\norm{\nabla f(x^t)}^2 \notag \\
&& \qquad - \underbrace{\left(\frac{1}{2\gamma} - \frac{L}{2} - \frac{\gamma \beta \xc \Lplsq}{2\theta n}\right)}_{\geq 0} \norm{x^{t+1}-x^t}^2 \\
& \leq &f(x^t) - f^\ast + \frac{\gamma}{2} \left(\frac{\xc}{n} + (1-\theta)\frac{\xc}{\theta n}\right) G^t - \frac{\gamma}{2}\norm{\nabla f(x^t)}^2 \\
&=& f(x^t) - f^\ast +  \frac{\gamma\xc}{2\theta n}  G^t - \frac{\gamma}{2}\norm{\nabla f(x^t)}^2 \\
& \overset{\eqref{eq:lyapunov_proof_separate}}{=} & \Psi^t - \frac{\gamma}{2}\norm{\nabla f(x^t)}^2.
\end{eqnarray*}

%\[\frac{\xc}{n} + (1-\theta)C = C \Leftrightarrow  \frac{\xc}{n} = \theta  C \Leftrightarrow  C = \frac{\xc}{ \theta n} . \]

Unrolling the inequality above, we get
\begin{equation}
0 \leq \Psi^T\leq  \Psi^{T-1} - \frac{\gamma}{2}\norm{\nabla f(x^{T-1})}^2 \leq \Psi^0 - \frac{\gamma}{2} \sum\limits_{t=0}^{T-1} \norm{\nabla f(x^t)}^2,
\end{equation}
and the result follows.
\end{proof}

\newpage
\section{Additional Details of Experiments}

Our code is available here \url{https://github.com/burlachenkok/ef21_with_rare_features}.

\subsection{Linear regression on sparse data}
\label{app:lin_reg_on_sparse}

In our synthetic experiments, we consider the minimization of the function $$f(x) = \frac{1}{n} \sum_{i=1}^{n} f_i(x),$$ where $f_i(x) = \dfrac{1}{m} \norm{\bA_i x - {b_i}}^2 + \phi(x)$ and and we choose $\phi\equiv 0$. Therefore, $$ \nabla f_i(x) = \dfrac{2}{m}\left( \bA_i^\top \bA_i x - \bA_i^\top b_i \right).$$
\paragraph{The function $f(x)$ satisfies~\Cref{as:L+}.} The statement follows from since for all $x,y \in \mathbb{R}^d$, we have
\begin{eqnarray*}
	\avein \norm{\nabla f_i(x) - \nabla f_i(y)}^2 &=&
	\avein \norm{\dfrac{2}{m} \bA_i^\top \bA_i x  - \dfrac{2}{m}\bA_i^\top \bA_i y }^2 \\
	& = &\avein \frac{4}{m^2} \norm{\bA_i^\top \bA_i (x - y)}\\
	& = &(x-y)^\top \left[ \frac{4}{m^2 n} \sumin (\bA_i^\top \bA_i)^2 \right] (x-y)\\
	& \leq & \Lplsq \norm{x - y}^2.
\end{eqnarray*}

From this we can conclude that for this problem functions  $f_1(x),\dots,f_n(x)$ satisfy Equation \eqref{eq:L+} with:
\[\Lplsq = \frac{4}{m^2 n} \lambda_{\max} \left(\sumin (\bA_i^\top \bA_i)^2 \right).\]
In our experiments, we fixed the dimension $d=500$ and the number of clients $n=100$. For the analysis, we designed a controlled way to generate instances of synthetic quadratic optimization problems with a desired sparsity pattern. The generation of an instance of optimization problems is driven by the main meta parameter $\xc/n$, and the auxiliary meta-parameter $v$ which affects the distribution of $L_i$.

\paragraph{Ensuring $\Lplsq \ll \tilde{L}^2$.}
To fully demonstrate the efficacy of the new theoretical results, it is desirable to enforce a significant difference between $\Lplsq$ and $\tilde{L}^2$. The standard theory assumes that $\Lplsq = \tilde{L}^2$~\citep{EF21}, but the refined \Cref{lem:M-bound-via-c} allows for the reduction of $\Lplsq$. Assuming that $\max_i L_i^2$ is attained at index $j'$, if we modify $L_i$ for $i\ne j'$ such that $L_i \le L_{j'}$, the left-hand side of the expression
$$ \min \left\{ \sqrt{\frac{\xc  \max_i L_i^2}{n}}, \sqrt{\frac{\sum_{i=1}^n L_i^2}{n} } \right\}.
$$ remains unaffected, while the second term, equal to $\tilde{L}$, is influenced. Clearly, for a fixed value of $\max_i L_i^2=L_{j'}^2$, in order to maximize $\tilde{L}$, we need to select $L_i = L_j, \forall i \in [n]$. The instances of a quadratic optimization problem with such a property demonstrate the greatest advantage of the new theory.

On the other hand, if we ask the question when our analysis does not bring a big improvement over the standard \algname{EF21} analysis, this is the case when $\max_i L_i^2 = \sum_{i=1}^{n}L_i^2$. This situation is attained when $L_{j'}$ is constant and $L_i=0$ for $i \ne j'$.

\paragraph{Main meta parameter $\xc/n$.}
From Lemma \ref{lem:M-bound-via-c}, we see that $\frac{\xc}{n}$ plays an important role in the multiplier in Lemma \ref{lem:M-bound-via-c} and comes into the denominator of the stepsize in  \Cref{thm:convergence_separate}. Firstly, we can observe from Equation \eqref{eq:sigma} that the minimum possible value of $\frac{\xc}{n}=\frac{1}{n}$, and the maximum possible value of $\frac{\xc}{n}=1$ is attained for $\xc=n$. So $\frac{\xc}{n} \in \left[\frac{1}{n}, 1\right]$. We provide a controllable way to specify this parameter. The main intuition behind this parameter is the following. As $\frac{\xc}{n}$ is smaller, it is more advantageous for our method compared to the standard \algname{EF21}. And in this case, there is a serious hope to observe in practice that our strategy of selecting the step size demonstrates better results

\paragraph{Auxiliary parameter $v$.} The meta-parameter $v\in[0,1]$ allows selecting between two extreme distributions of $L_i$ in context of Lemma \ref{lem:M-bound-via-c}. One extreme point is when all $L_i$ attains the same constant values $L_c$ (can be selected arbitrarily, but we have selected $L_c=20.0$). This distribution of $L_i$ corresponds to $v=0$ (\textit{and is preferable for our analysis}). 

Another extreme point is where $L_1=10.0$ and $L_i=0, \forall i \ne 1$. This distribution of $L_i$ corresponds to $v=1$ (\textit{and is not preferable for our analysis}). Finally, in the case of using values $v \in (0,1)$ the distribution of $L_i$ across clients will be linearly interpolated between these two distributions corresponding to the two cases described above.

In our experiments, we set $v = 0.1$.

\paragraph{Controlling the sparsity parameter $\xc$.} The process of dataset generation starts with constructing a matrix $\bS$ with $n$ rows and $d$ columns with $\bS_{i,j} \in \{0, 1\}$. We set $\bS_{i,j}=1$ when client $i \in [n]$ depends on the coordinate $j \in [d]$, otherwise we $\bS_{i,j}=0$. The filling of $\bS$ happens column-wise. The columns $s_j \in \mathbb{R}^n$ of the matrix $\bS$ are filled with values $1$ in positions corresponding of a random subset of $[d]$ of cardinality $\xc$ chosen uniformly at random. If after processing all columns there exists a client that depends on $0$ coordinates, the strategy of filling is restarted. In the logic of our generation algorithm, we use $5$ attempts to create a valid filling. If the dataset sparsity generation procedure fails after all attempts, we report the failure of the dataset generation process.

\paragraph{Generating datasets.}
Each client has a loss function $f_i(x):\mathbb{R}^d \to \mathbb{R}$. However, due to the previous construction of the sparsity pattern, the client $i$ depends on the coordinates $\{j: \bS_{i,j}=1\}$. After renaming variables and ignoring variables that $f_i(x)$ does not depend on, we define $f_i(z)$ as:
\[ f_i(z) \eqdef \frac{1}{n_i} \norm{{\bA}_i \cdot x(z) - b_i}^2.\]
Next, we generate a uniform spectrum $[1.0, 20.0]$ and fill ${\bA}_i$ in such a way that the spectrum $\lambda\left(\frac{2}{n_i} {\bA}_i^\top {\bA}_i\right)$ is represented by a linear interpolation controlled by the meta parameter $v$ from the uniform spectrum $[1.0,20.0]$ to $[L_c, 0.0, \dots, 0.0]$. After constructing ${\bA}_i$, we set $b_i~\eqdef~{\bA}_i \cdot x_{\mathrm{solution}} + {noise}_i$. The $x_{\mathrm{solution}}$ plays the role of a prior known solution, and ${noise}_i \sim \mathcal{U}_{[-1,1]}\cdot p$ is additive noise in the linear model, where $p\in \mathbb{R}$ is a fixed constant. In our experiments, $p=2.0$. It plays the role of a perturbation that scales the standard deviation of the zero mean r.v. ${noise}_i$. It helps to escape the interpolation regime, i.e., situation in which $\nabla f_i(x^*)=0$ for all $i \in [n]$.

\subsection{Logistic regression with adaptive stepsize}
\label{app:log_reg_adapt}

%\peter{To be done}

In this section, we provide additional numerical experiments in which we compare \algname{EF21} under the standard analysis and our analysis. We address the problem of training a binary classifier via a logistic model on several \texttt{LIBSVM} datasets \citep{chang2011libsvm}.

\paragraph{Computing and software environment.} We used the Python software suite \texttt{FL\_PyTorch} \citet{burlachenko2021fl_pytorch} to simulate the distributed environment for training. We trained logistic regression across $n=300$ clients in the experiments below. We ran the experiments on a compute node with Ubuntu 18.04 LTS, 251 GBytes of DRAM memory, and 48 cores of Intel(R) Xeon(R) Gold 6246 CPU @ 3.30GHz. We used single precision (FP32) arithmetic.

\paragraph{Experiment setup.}
We conducted distributed training of a logistic regression model on \texttt{A9A}, \texttt{MUSHROOMS}, \texttt{W5A}, \texttt{PHISHING} datasets. This setting is achieved by specifying for Equation \eqref{eq:main_problem} the functions $f_i(x)$ as:
\begin{eqnarray*}
f_i(x) \eqdef \dfrac{1}{n_i} \sum_{j=1}^{n_i} \log \left(1+\exp({-y_{ij} \cdot a_{ij}^{\top} x})\right), \qquad
(a_{ij},  y_{ij}) \in \mathbb{R}^{d} \times \{-1,1\}.
\end{eqnarray*}
The initial shifts for \algname{EF21} are $g_i^0=0, \forall i \in [n]$. All datasets are randomly reshuffled and spread across clients in such a way that each client stores the same amount of data points $n_i$; the residual is discarded. The initial iterate $x^0$ is initialized as $\mathcal{U}_{[-\sqrt{1/D}, \sqrt{1/D}]}$ according to the default initialization of a linear layer in PyTorch \footnote{\href{https://pytorch.org/docs/stable/generated/torch.nn.Linear.html}{Information about initialization  linear layer torch.nn.Linear}}. For standard \algname{EF21} we used the largest step size allowed by its theory.

\paragraph{Reasons of sparse features.}

In addition to what we have already mentioned in Section \ref{sec:sparsity} we would like to highlight reasons of appearing rare features in the training based on our experience:

First, when the input for ML models is a categorical value from a finite set $S$, not a real number, a special conversion is needed. If $S$ has no natural order, the conversion usually maps each $s\in S$ to a one-hot vector $\tilde{s} \in \mathbb{R}^k$, where $\tilde{s}$ has only one non-zero element. This conversion has drawbacks, such as introducing an artificial partial order in $\mathbb{R}^k$. It is used for models that cannot handle categorical inputs directly without conversion. Examples of models are Neural Nets and Linear Models.

Second, some features may be inherently sparse vectors in the application. For example, if $a$ encodes a voxel grid of solid geometrical physical objects, it will be a sparse vector in most applications.

Third, during the modeling stage, there may be a specific pattern called "\textit{feature template}", which defines how a family of close-by features is evaluated. This technique is often used in applications where ML is applied for complex tasks that require defining the input features as part of the problem, and they are not given in advance, e.g. because there is no established practice for specific tasks.

\paragraph{Practical applicability of our analysis for a case when $\xc=n$.} 

The \texttt{LIBVSM} datasets are mostly sparse datasets. As we explained above, this is not an unrealistic assumption. However, in practice, the Definition \ref{eq:sigma} may be too strict. According to this definition, adding a bias (or intercept) term to the Machine Learning model during training leads to $\xc=n$. We visualize the sparsity patterns in \texttt{A9A}, \texttt{MUSHROOMS}, \texttt{PHISHING},
\texttt{W5A} in \Cref{fig:c_patterns_libsvm_ds_fig}. This representation is obtained after uniformly shuffling the original train datasets and splitting them across $n=300$ clients.

In this experiment, we consider the setting where the Master executes Algorithm \ref{alg:ef21}, but with a varying step size $\gamma^t$ in Line 4.

In our modification, we use the maximum theoretical step size from Theorem \ref{thm:convergence_separate}, but we define the parameter $\xc$ based on Lemma \ref{lem:98y89fhd_8fd} because it reflects the original notion of $\xc$ when training Machine Learning models with intercept terms that lead to $\xc=n$. Moreover, we define the quantity $\Lpl$ based on Lemma \ref{lem:M-bound-via-c}. We summarize the rules for executing the adaptive version of \algname{EF21} as follows:
\begin{gather}
\xct \eqdef \dfrac{\norm{g^t-\nabla f(x^t)}^2}{G^t/n}, \nonumber \\
\Lpl \eqdef \min \left\{ \sqrt{\frac{\xct  \max_i L_i^2}{n}} , \sqrt{\frac{\sum_{i=1}^n L_i^2}{n} } \right\}, \nonumber \\
\gamma \eqdef \gamma^t \eqdef \frac{1}{L + \Lpl \sqrt{\frac{ \xct }{n}   } \frac{\sqrt{1-\alpha} + 1-\alpha}{\alpha} }.
\label{eq:practical_modif_ef21}
\end{gather}
The system of rules defined in system of equations \eqref{eq:practical_modif_ef21} raises several questions:

\begin{enumerate}
	\item How can we estimate the quantity $\xct$? To estimate $\xct$ from this definition, we need to be able to estimate $\norm{g^t - \nabla f(x^t)}^2$ in the master, but the vector $\nabla f(x^t)$ is not available in the master.
	\item How can we analyze the convergence of $\Psi^t$ when $\xct$ varies? If we allow $\xct$ to change during the optimization process, then the Lyapunov function from Equation \eqref{eq:lyapunov_proof_separate} also changes over time. And varying $\xc$ will make it very hard to analyze the behavior of $\Psi^t$.	 	
\end{enumerate}

We will not address these questions in our experiment below. We believe that a deep understanding of these questions is the subject of future research. The purpose of this experiment is to demonstrate that the notion of $\xct$ can open new opportunities for research in this direction.

\paragraph{Results and Conclusion.}
We present the results in Figure \ref{fig:exp_libsvm_ds}. In datasets \texttt{A9A} (a), \texttt{PHISHING} (b), \texttt{MUSHROOMS} (c), we can increase the step size by a factor of $10\mathrm{x}$ using our proposed scheme. In dataset \texttt{W5A} (d), we can only increase the step size by a small factor of $1.35\mathrm{x}$, which suggests the need for more refined analysis. In all experiments, the \algname{EF21} with our approximate scheme performs better than the standard \algname{EF21}. We do not observe any convergence or stability issues in any of the experiments. We also show the behavior of $\xct$, which is not exploited by the standard \algname{EF21}. We hope this experiment will inspire future research in the direction of adaptive $\xct$.

\begin{figure*}[t]
	\centering
	\captionsetup[sub]{font=tiny,labelfont={}}	
	\captionsetup[subfigure]{labelformat=empty}
	%\captionsetup{position=top}
	
	\begin{subfigure}[ht]{0.32\textwidth}
		\includegraphics[width=\textwidth]{./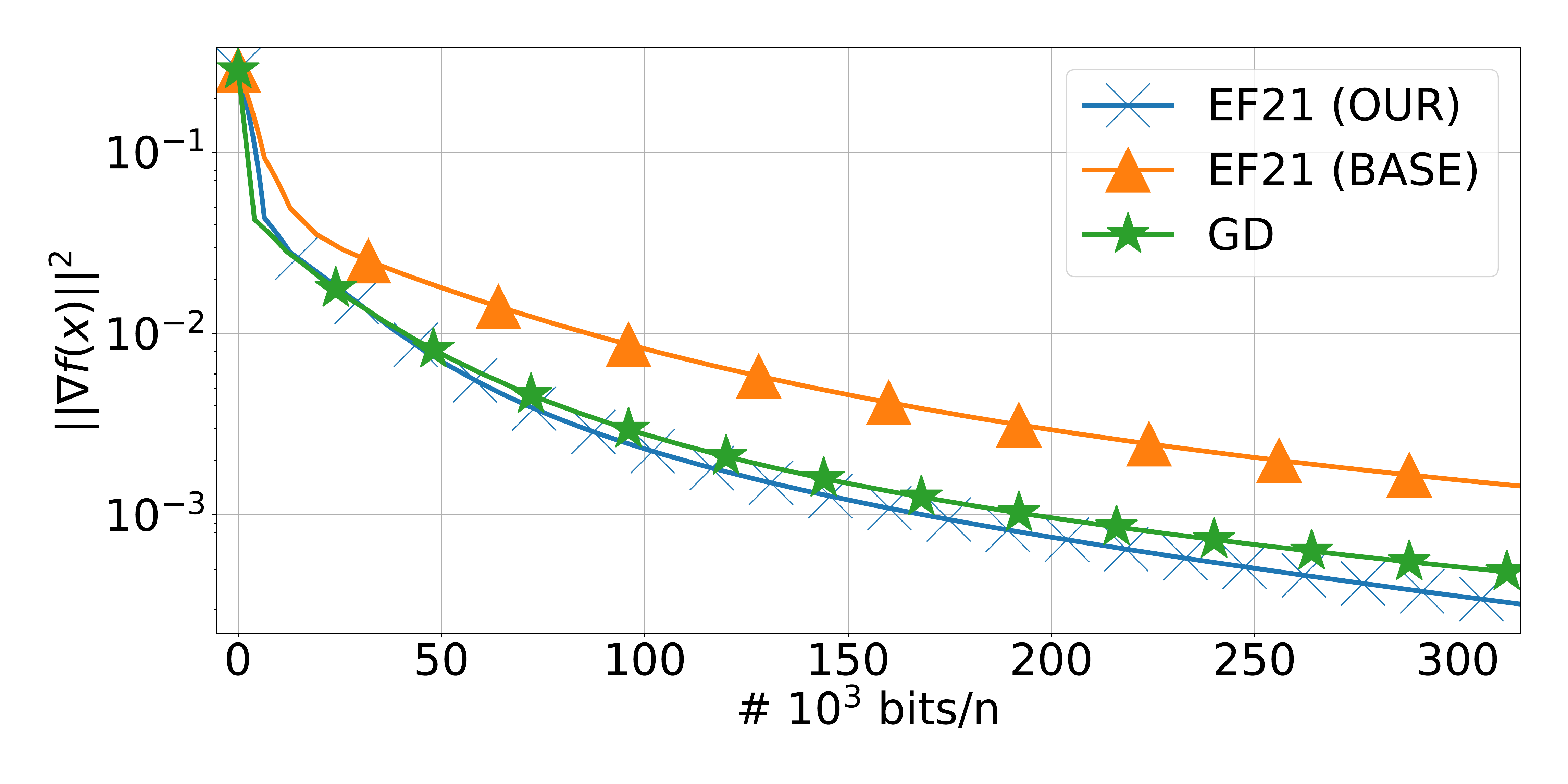} \caption{}
	\end{subfigure}
	\begin{subfigure}[ht]{0.32\textwidth}
		\includegraphics[width=\textwidth]{./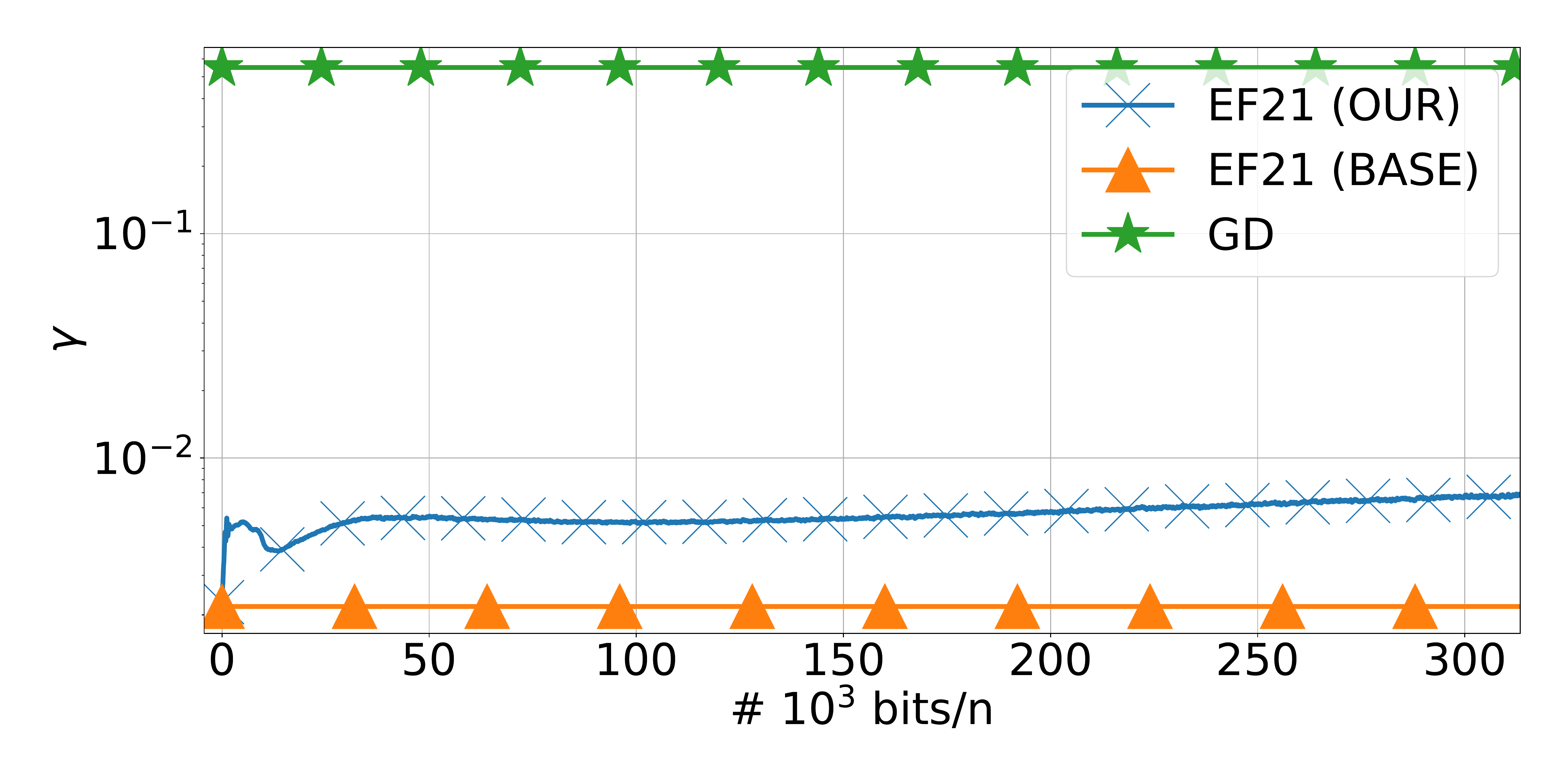} \caption{(a) A9A, $32561$ datapoints, $d=125$.}
	\end{subfigure}
	\begin{subfigure}[ht]{0.32\textwidth}
		\includegraphics[width=\textwidth]{./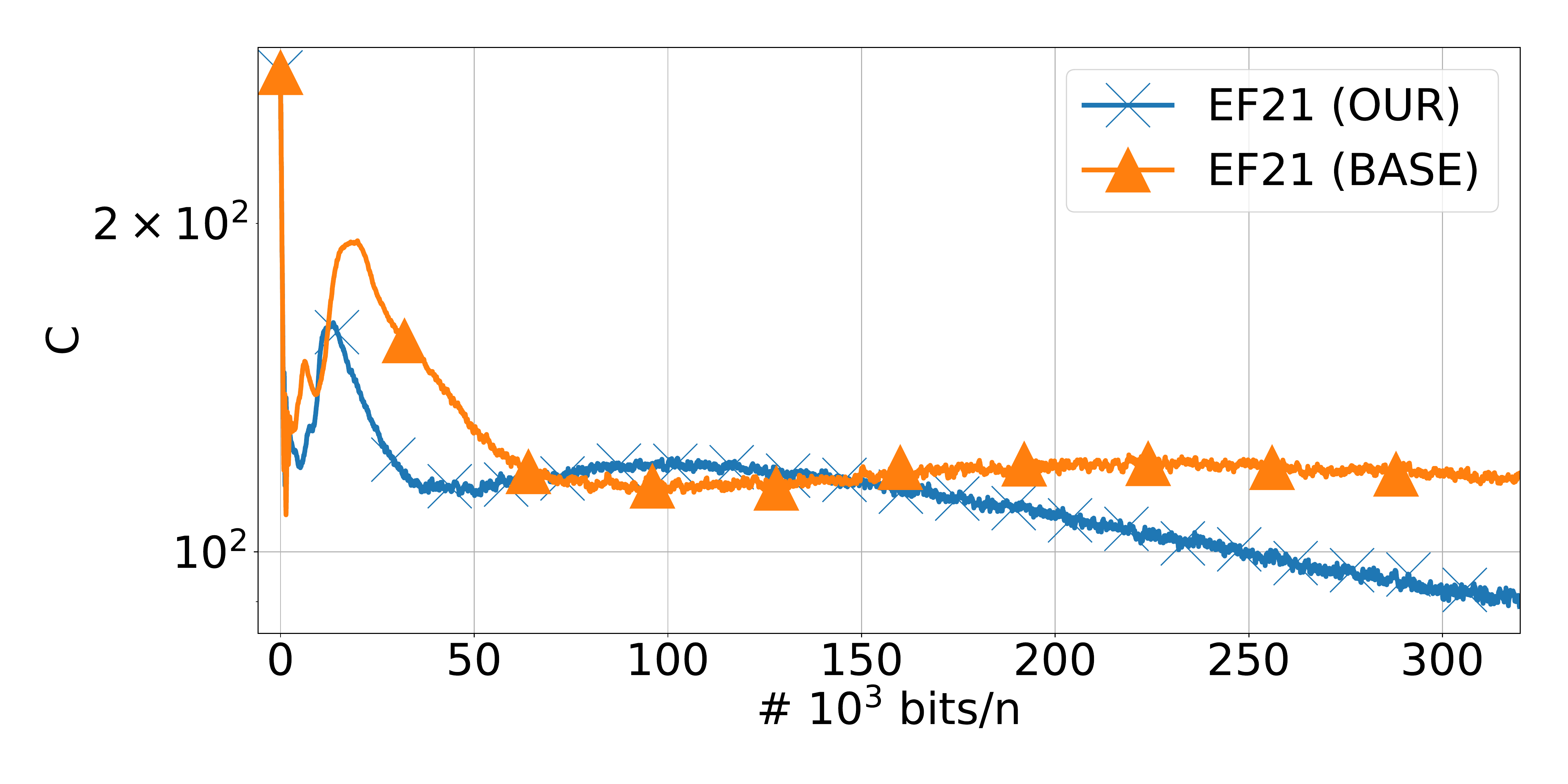} \caption{}
	\end{subfigure}
		
	\begin{subfigure}[ht]{0.32\textwidth}
		\includegraphics[width=\textwidth]{./figs/realexp/phishing/gradqr.pdf} \caption{}
	\end{subfigure}
	\begin{subfigure}[ht]{0.32\textwidth}
		\includegraphics[width=\textwidth]{./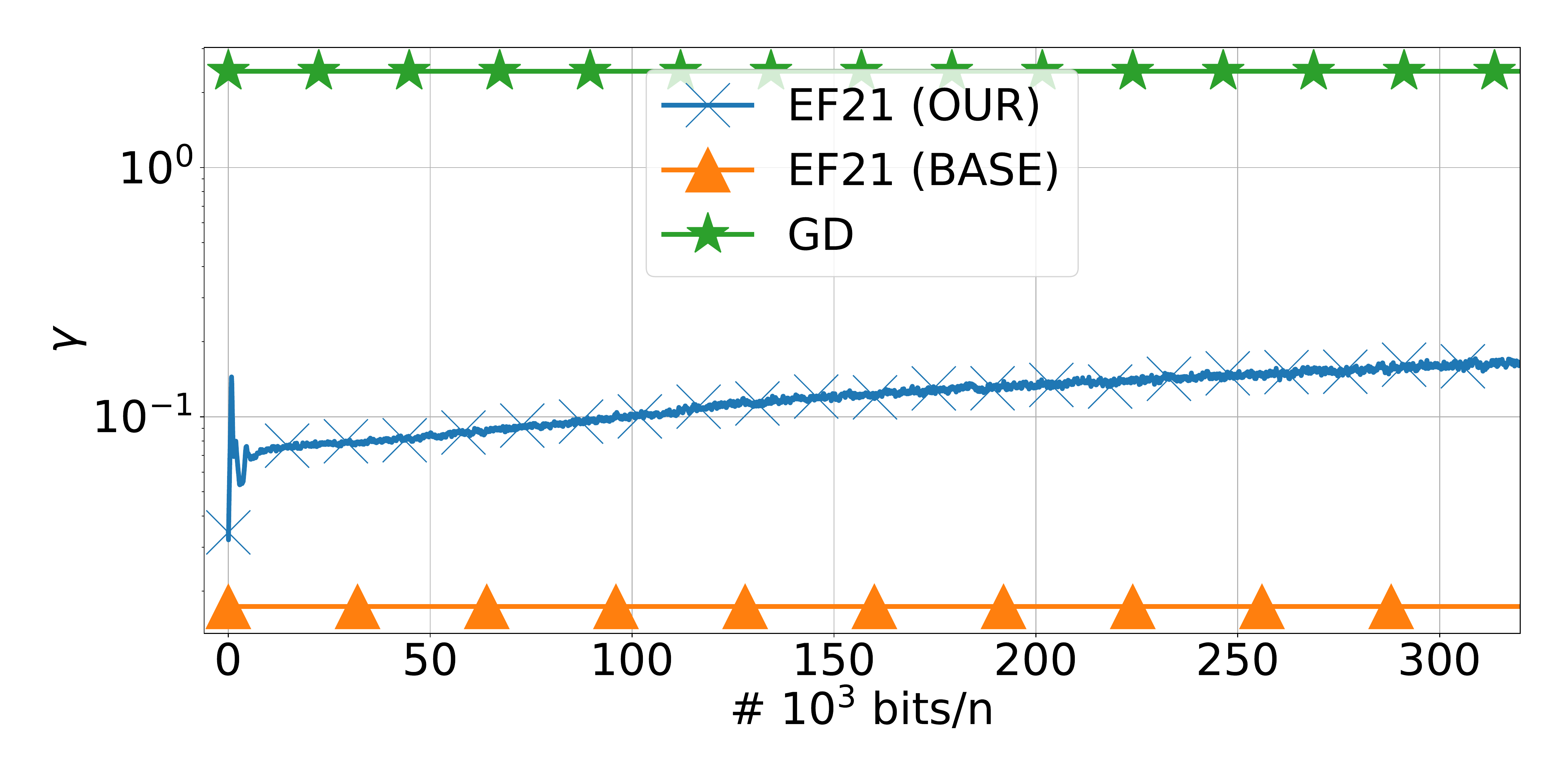} \caption{(b) PHISHING, $11055$ datapoints, $d=70$.}
	\end{subfigure}
	\begin{subfigure}[ht]{0.32\textwidth}
		\includegraphics[width=\textwidth]{./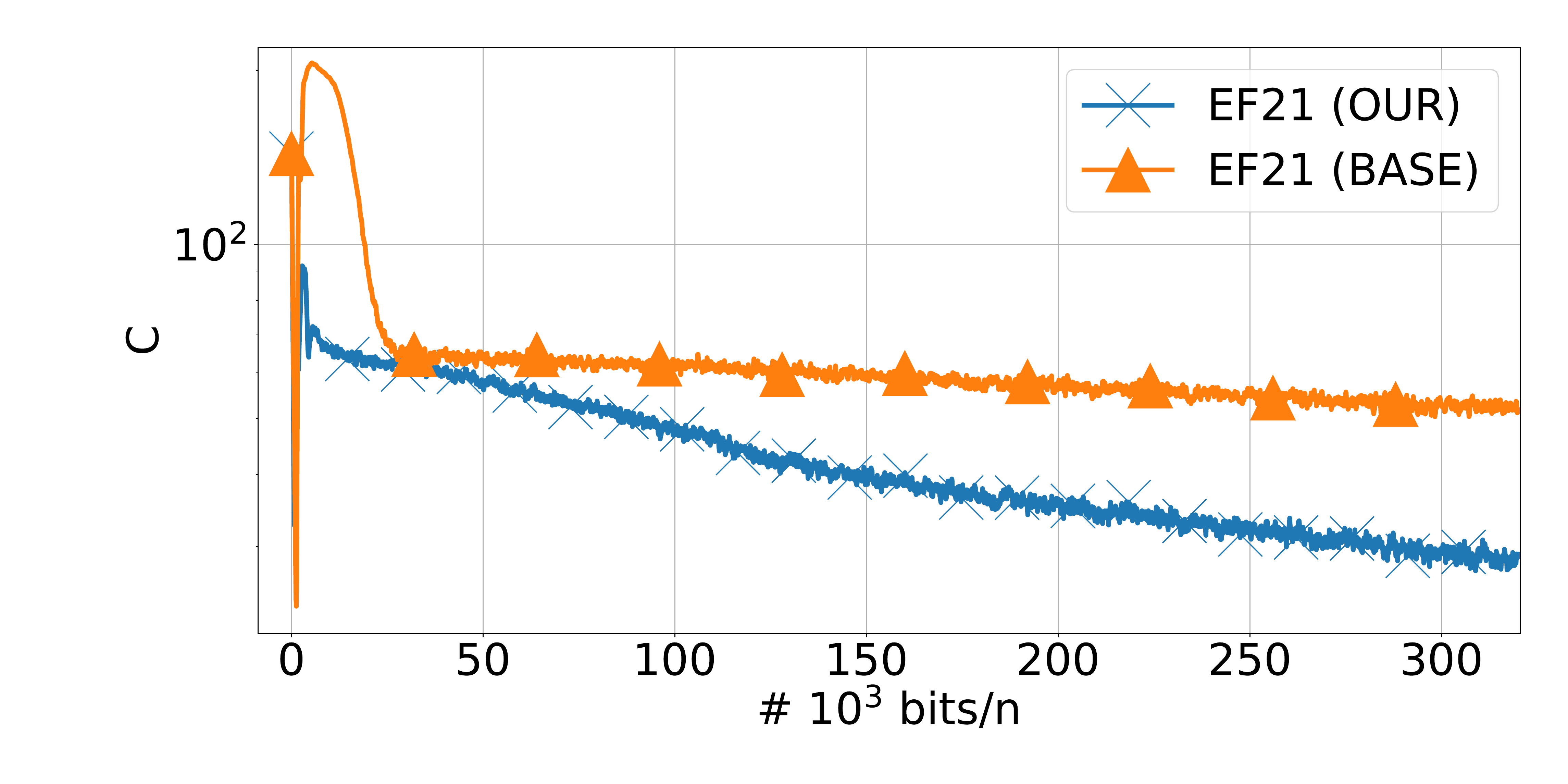} \caption{}
	\end{subfigure}

	\begin{subfigure}[ht]{0.32\textwidth}
		\includegraphics[width=\textwidth]{./figs/realexp/mushrooms/gradqr.pdf} \caption{}
	\end{subfigure}
	\begin{subfigure}[ht]{0.32\textwidth}
		\includegraphics[width=\textwidth]{./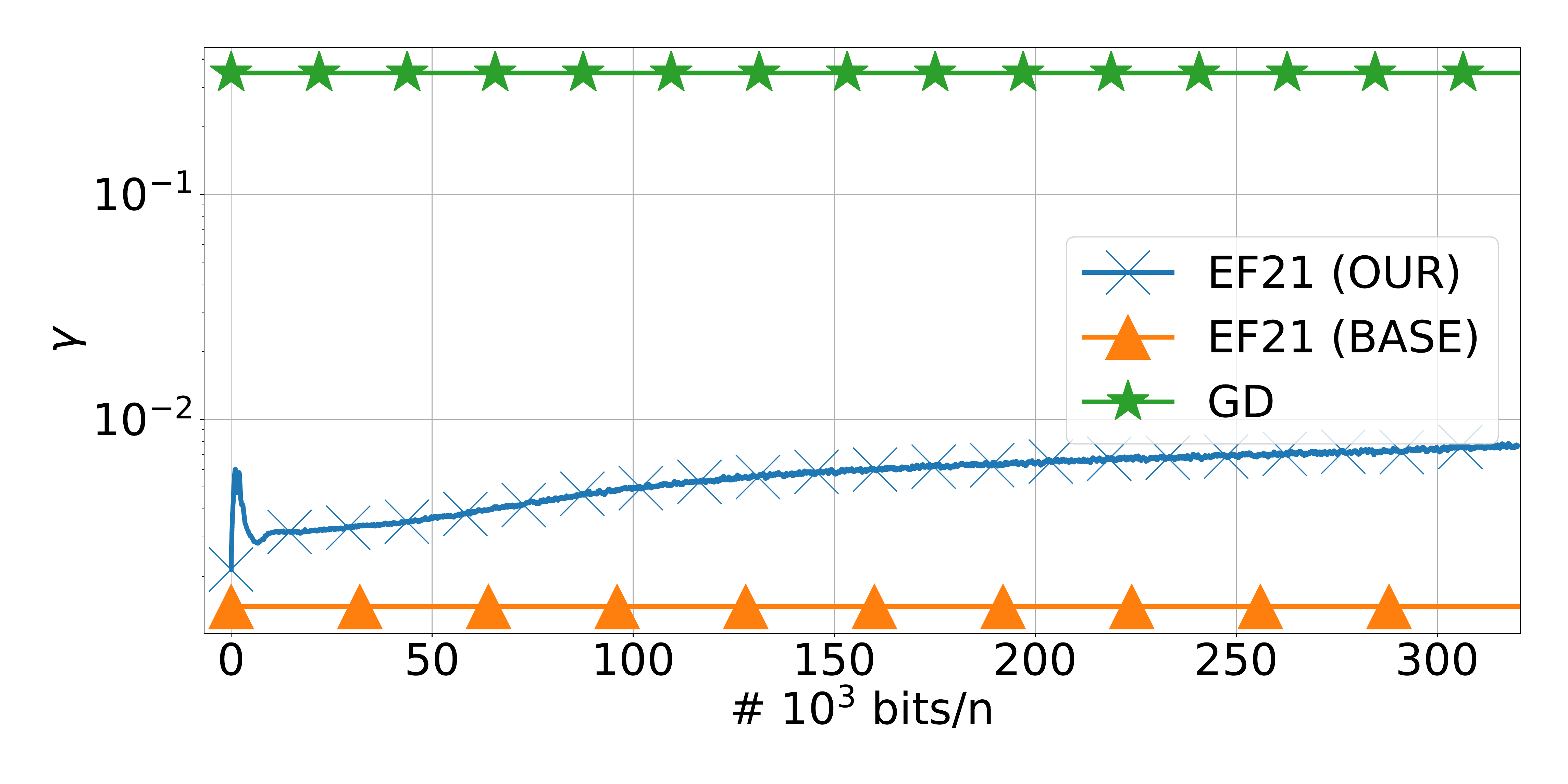} \caption{(c) MUSHROOMS, $8124$ datapoints, $d=114$}
	\end{subfigure}
	\begin{subfigure}[ht]{0.32\textwidth}
		\includegraphics[width=\textwidth]{./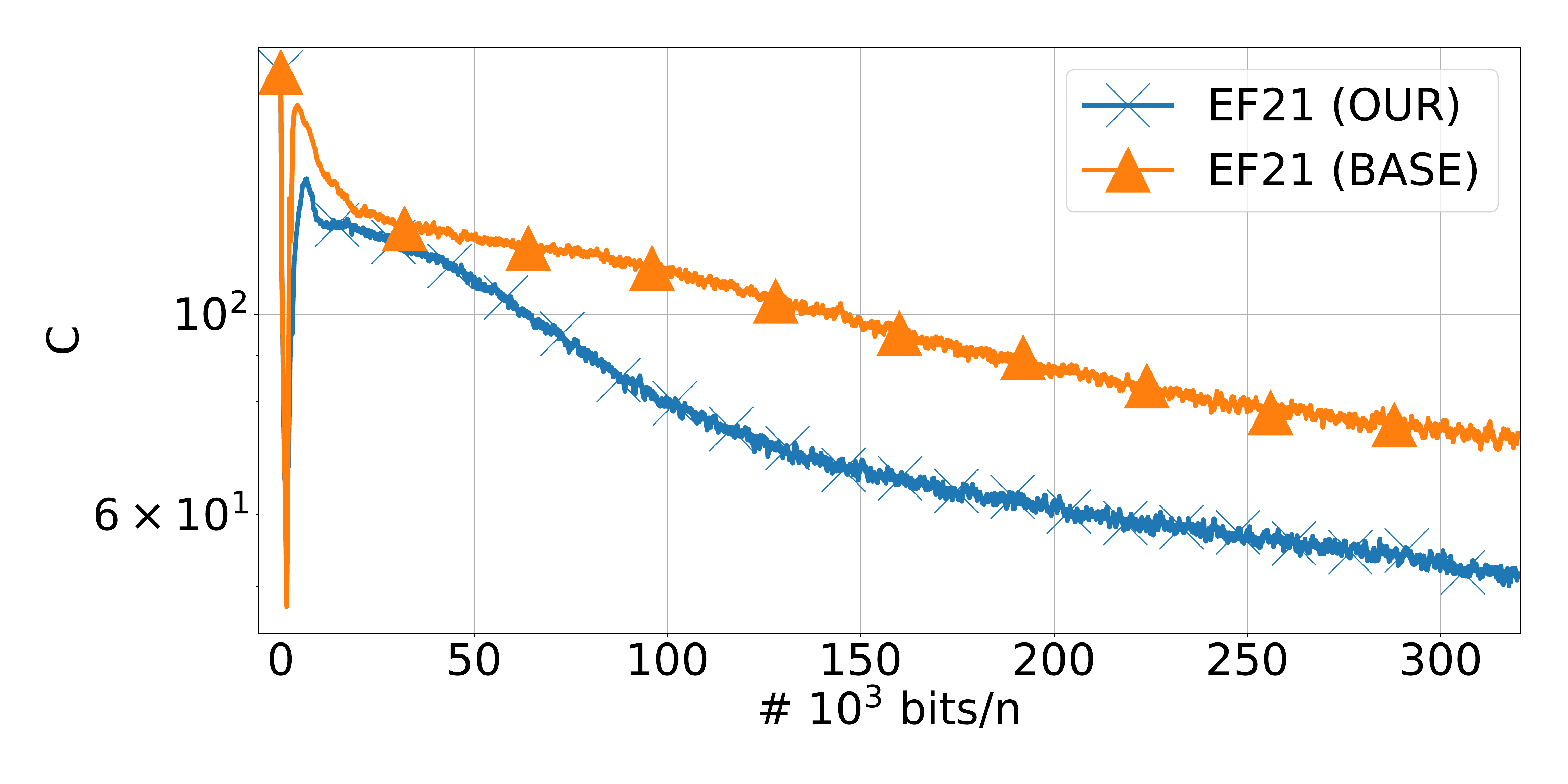} \caption{}
	\end{subfigure}
	
	\begin{subfigure}[ht]{0.32\textwidth}
		\includegraphics[width=\textwidth]{./figs/realexp/w5a/gradqr.pdf} \caption{}
	\end{subfigure}
	\begin{subfigure}[ht]{0.32\textwidth}
		\includegraphics[width=\textwidth]{./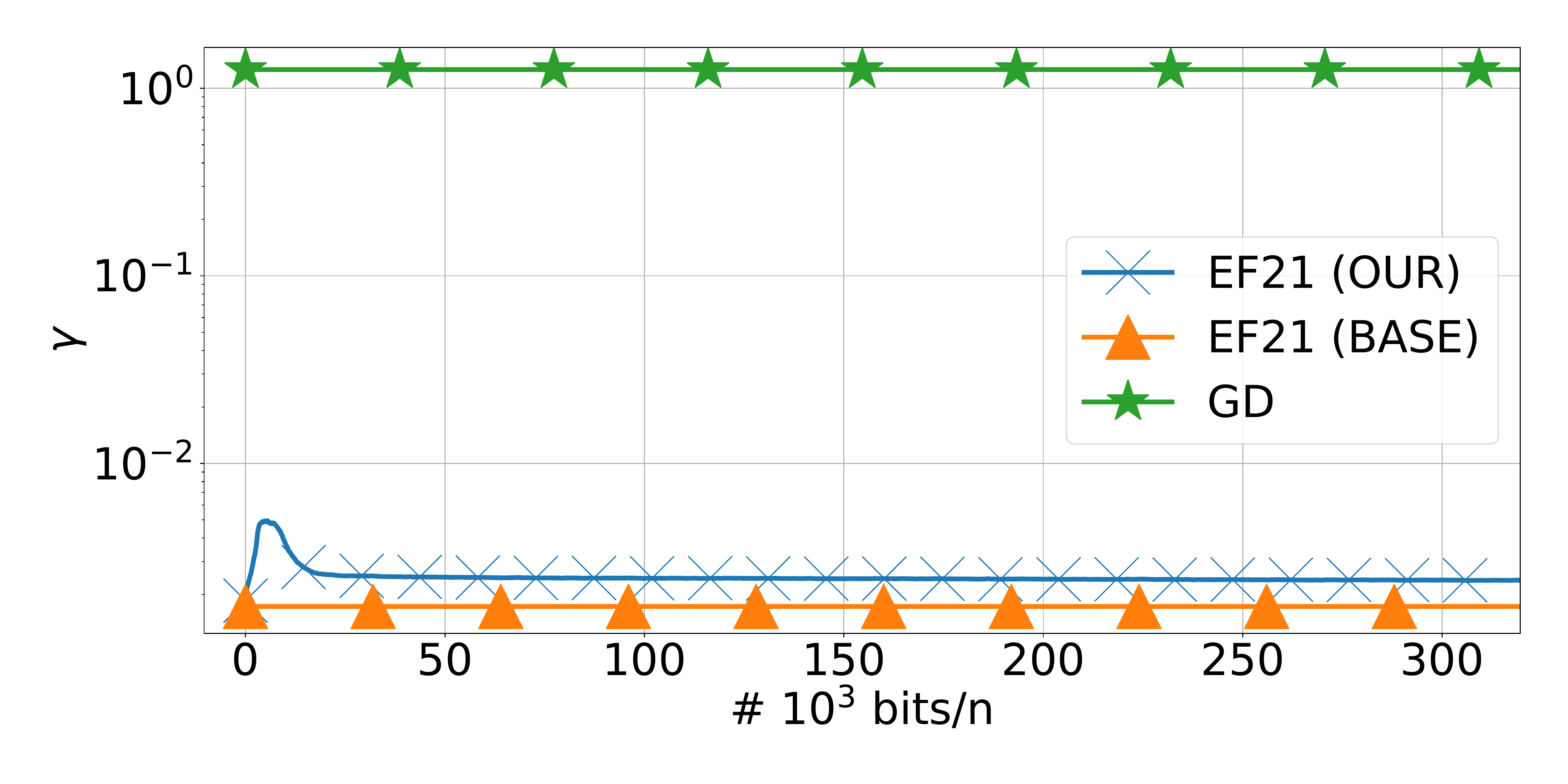} \caption{(d) W5A, $9888$ datapoints, $d=302$.}
	\end{subfigure}
	\begin{subfigure}[ht]{0.32\textwidth}
		\includegraphics[width=\textwidth]{./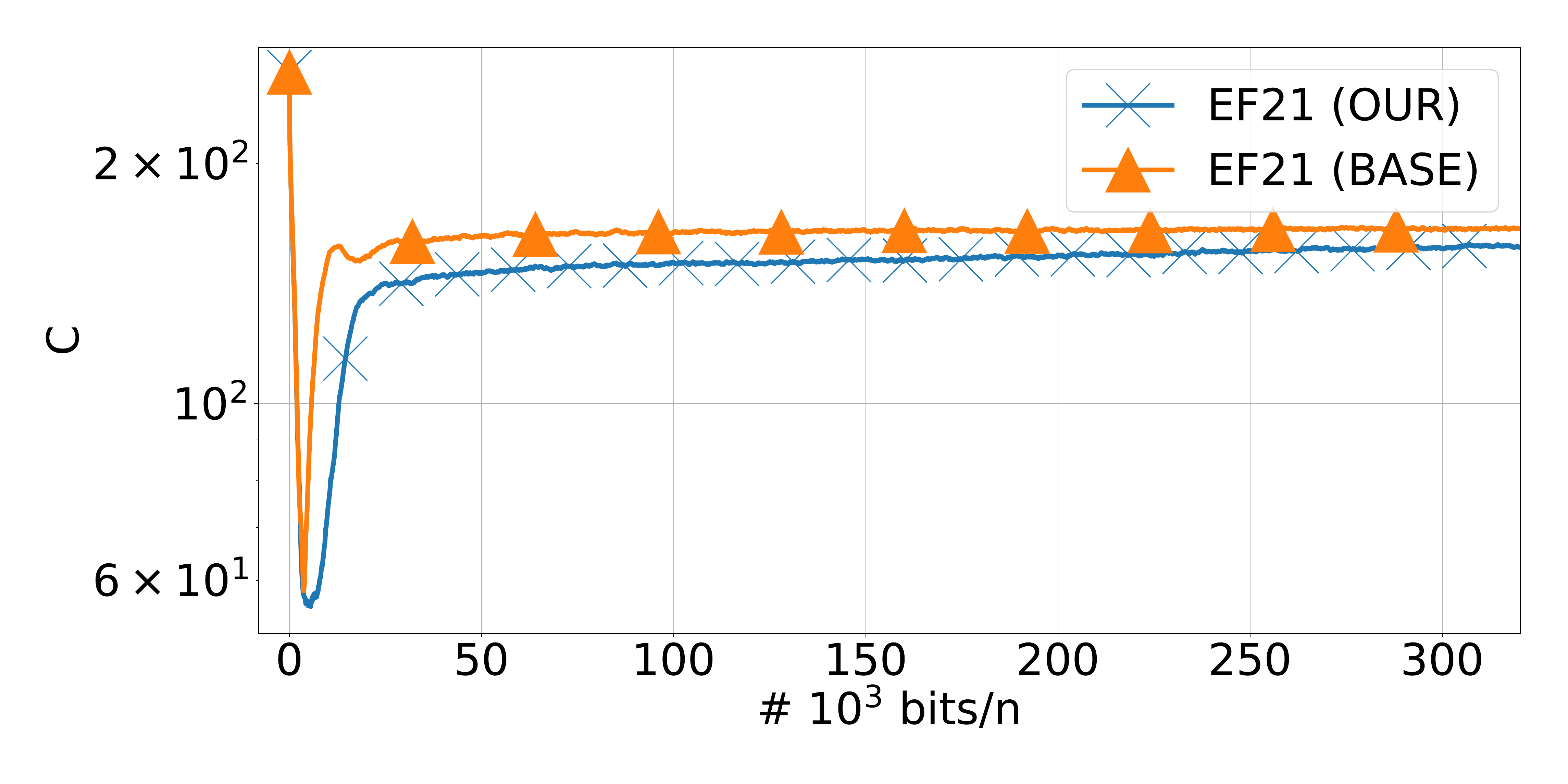} \caption{}
	\end{subfigure}
	
	%\vspave{-1pt}
	\caption{Training logistic regression model across $n=300$ client. Dimension of problem $d$ is presented in the plots, $\xct$ is the behavior of modified notion of variable $\xc$, $\gamma$ is the value of used step size. Computation is carried out in FP32. Full client participation. The step size used for standard \algname{EF21} and \algname{GD} are theoretical. Used client compressors for \algname{EF21} algorithms are \topk{1}.}
	\label{fig:exp_libsvm_ds}
\end{figure*}

%\begin{figure*}[t]
%	\centering
%	\captionsetup[sub]{font=scriptsize,labelfont={}}	
%	\captionsetup[subfigure]{labelformat=empty}
	%\captionsetup{position=top}
	
%	\begin{subfigure}[ht]{0.32\textwidth}
%		\includegraphics[width=\textwidth]{./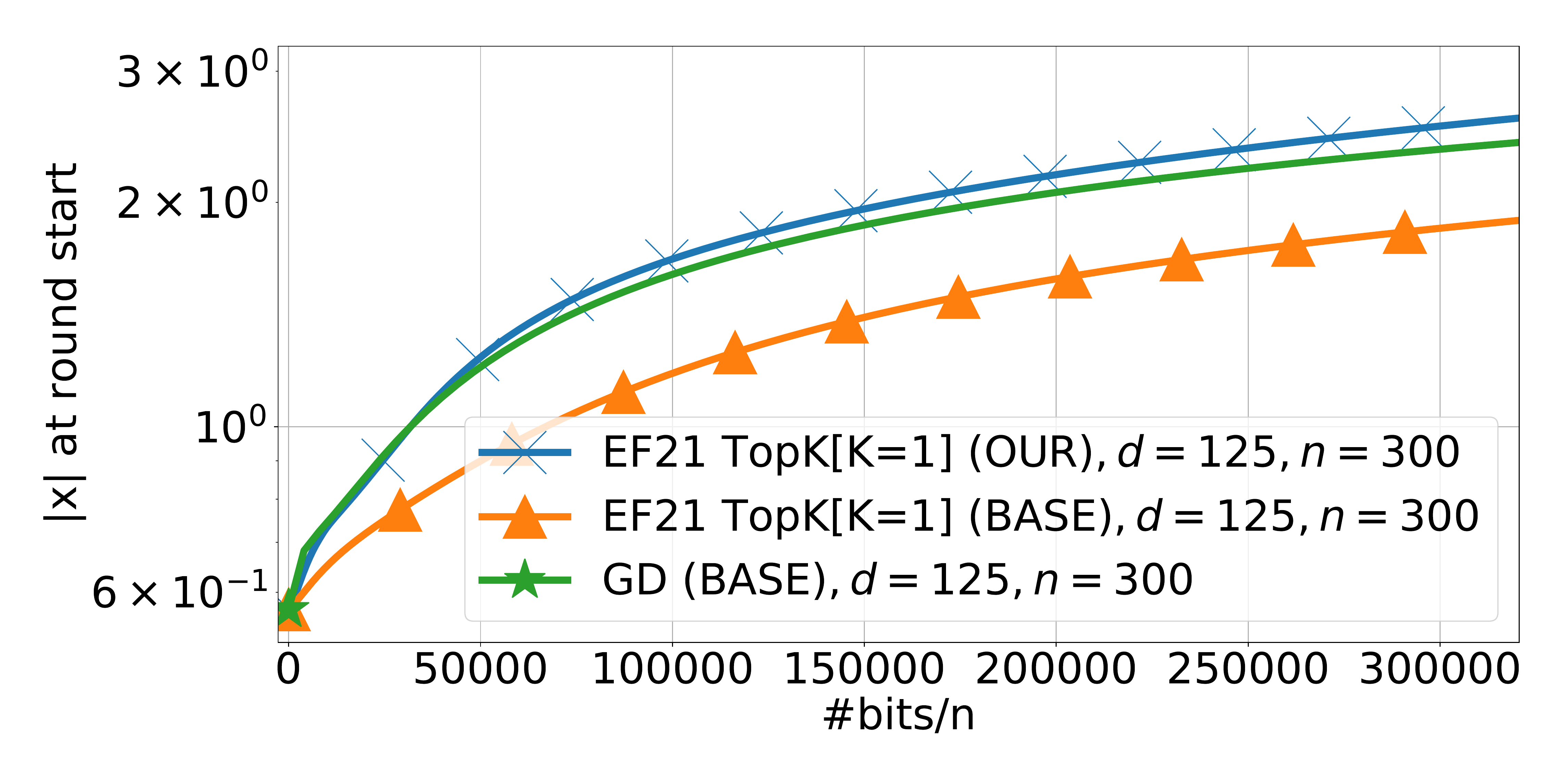} \caption{(a) A9A dataset.}
%	\end{subfigure}
%	\begin{subfigure}[ht]{0.32\textwidth}
%		\includegraphics[width=\textwidth]{./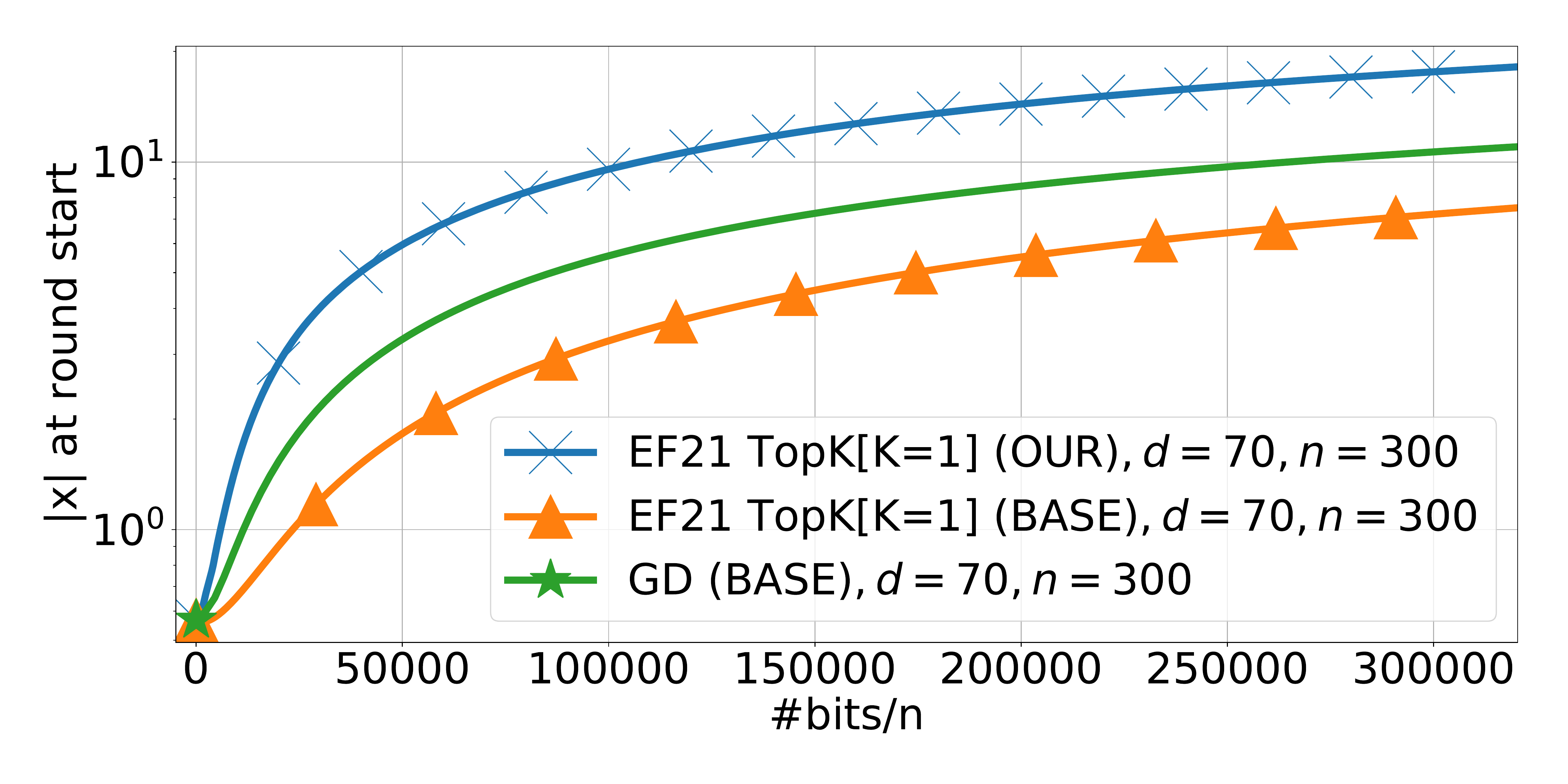} \caption{(c) PHISHING dataset.}
%	\end{subfigure}	
%	\begin{subfigure}[ht]{0.32\textwidth}
%		\includegraphics[width=\textwidth]{./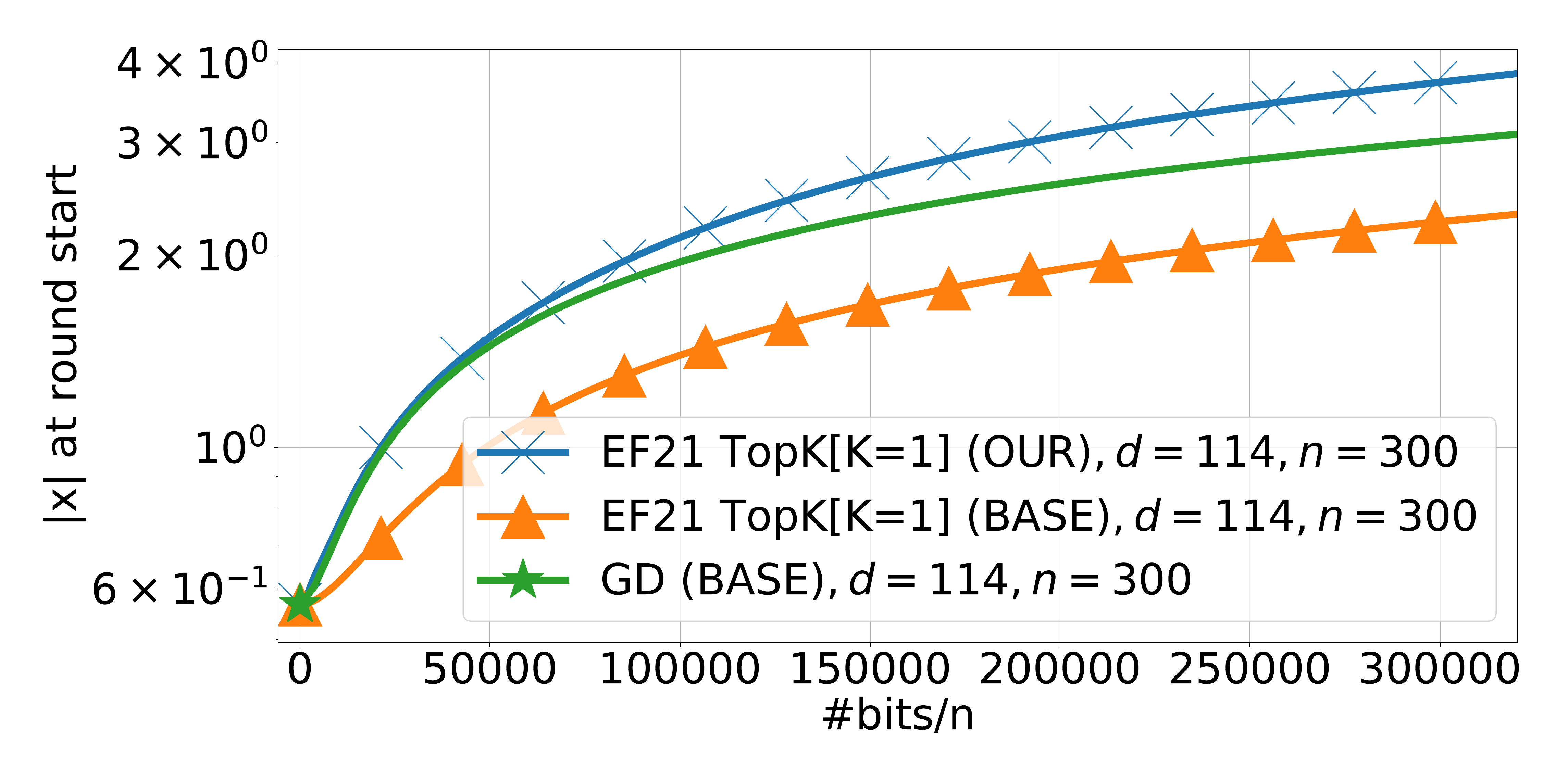} \caption{(b) MUSHROOMS dataset.}
%	\end{subfigure}

%	\begin{subfigure}[ht]{0.32\textwidth}
%		\includegraphics[width=\textwidth]{./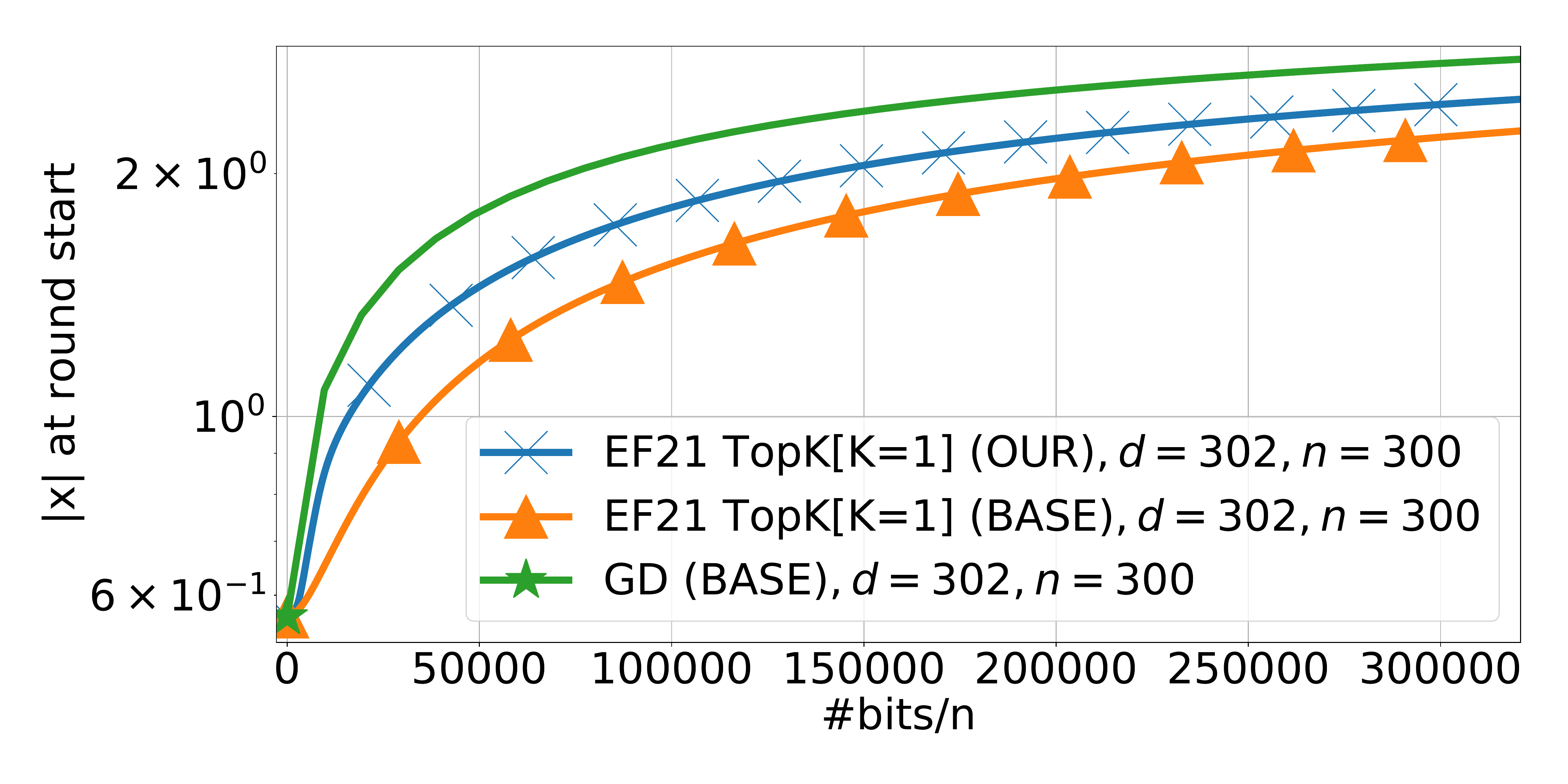} \caption{(d) W5A dataset.}
%	\end{subfigure}
	
%	\caption{Training Logistic Regression model across $n=300$ client. Dimension of problem presented in the plots. Behavior of iterates during training process from Fig. \ref{fig:exp_libsvm_ds}.}	
%	\label{fig:exp_libsvm_ds_iterates}
%\end{figure*}

\begin{figure*}[t]
	\centering
	\captionsetup[sub]{font=scriptsize,labelfont={}}	
	\captionsetup[subfigure]{labelformat=empty}
	
	\begin{subfigure}[ht]{0.33\textwidth}
		\includegraphics[width=\textwidth]{./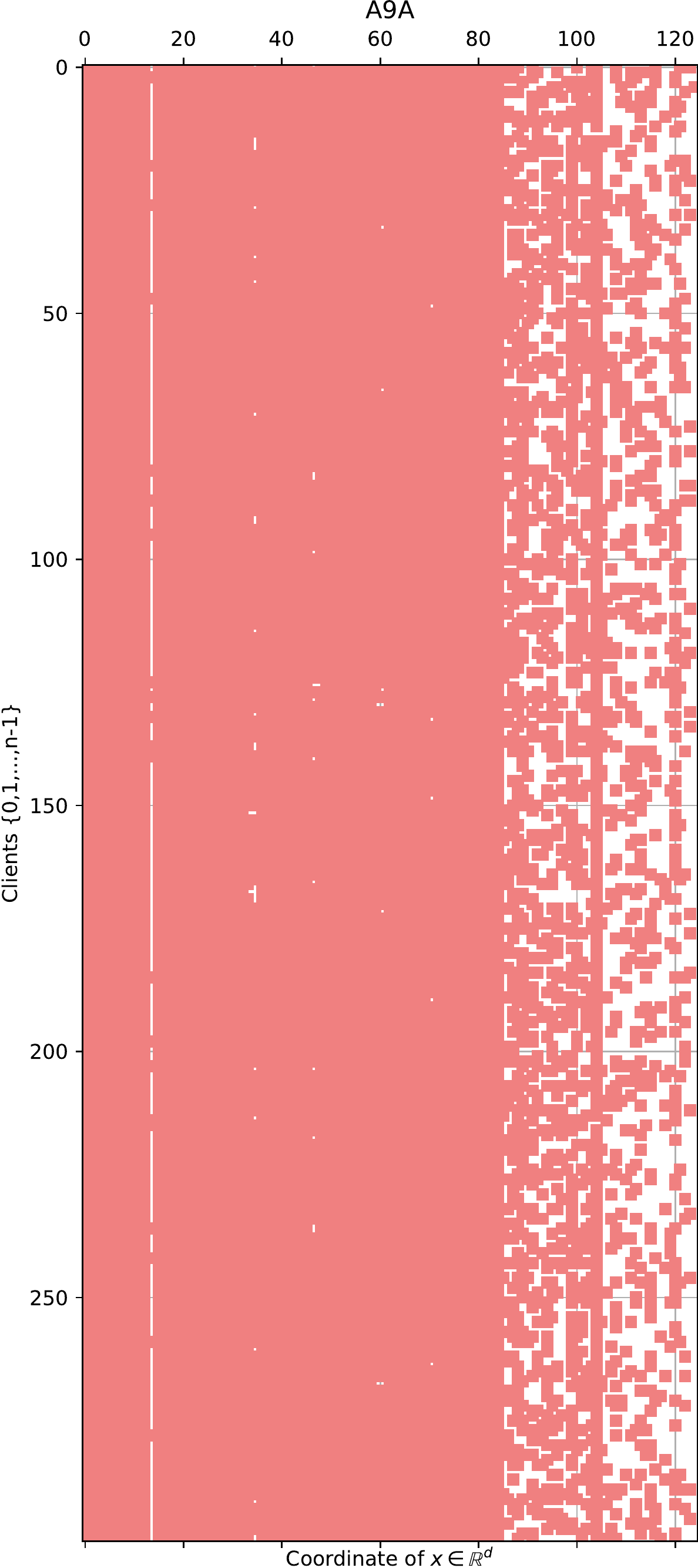} \caption{}
	\end{subfigure}
	\begin{subfigure}[ht]{0.31\textwidth}
		\includegraphics[width=\textwidth]{./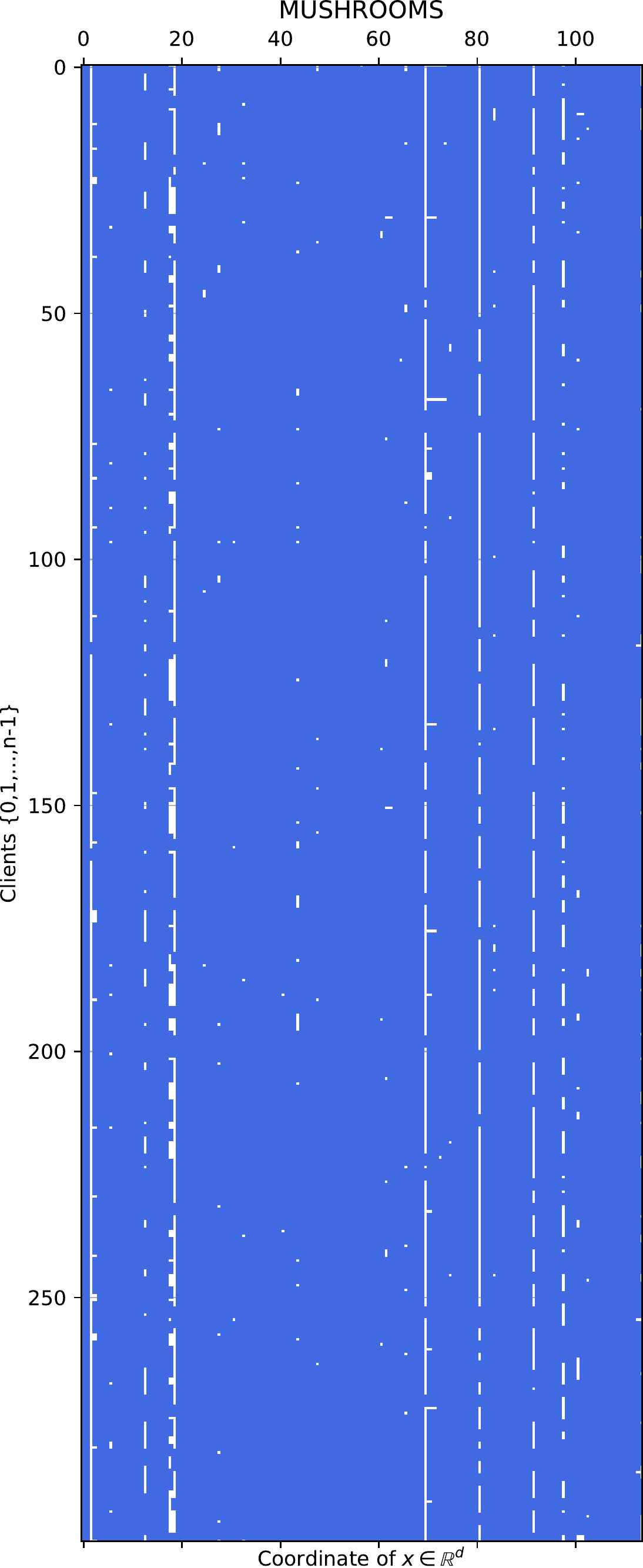} \caption{}
	\end{subfigure}
	\begin{subfigure}[ht]{0.19\textwidth}
		\includegraphics[width=\textwidth]{./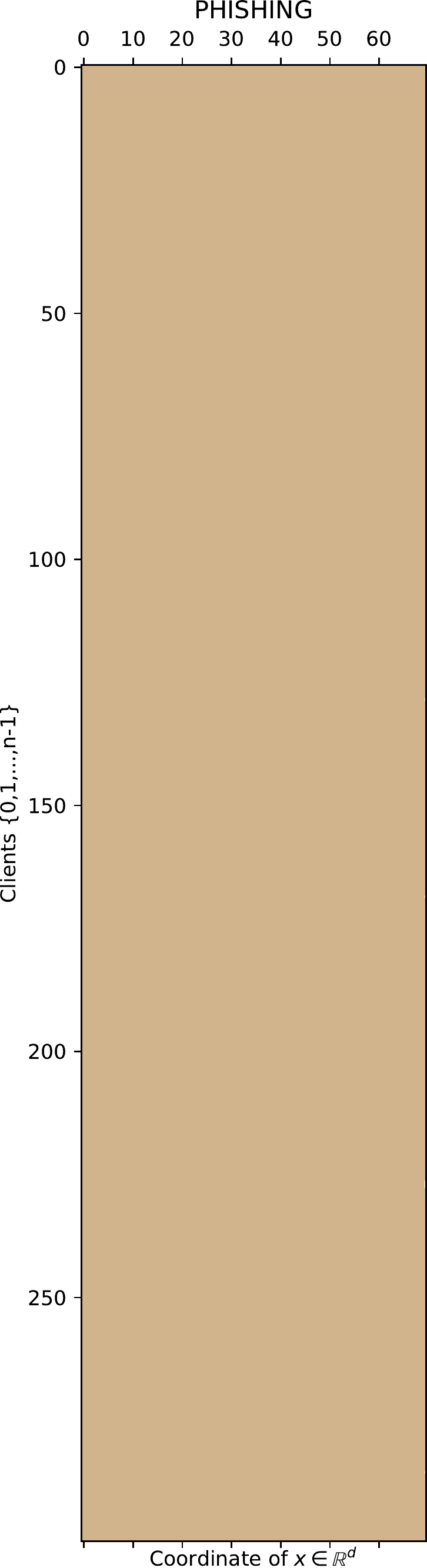} \caption{}
	\end{subfigure}

	\begin{subfigure}[ht]{0.70\textwidth}
		\includegraphics[width=\textwidth]{./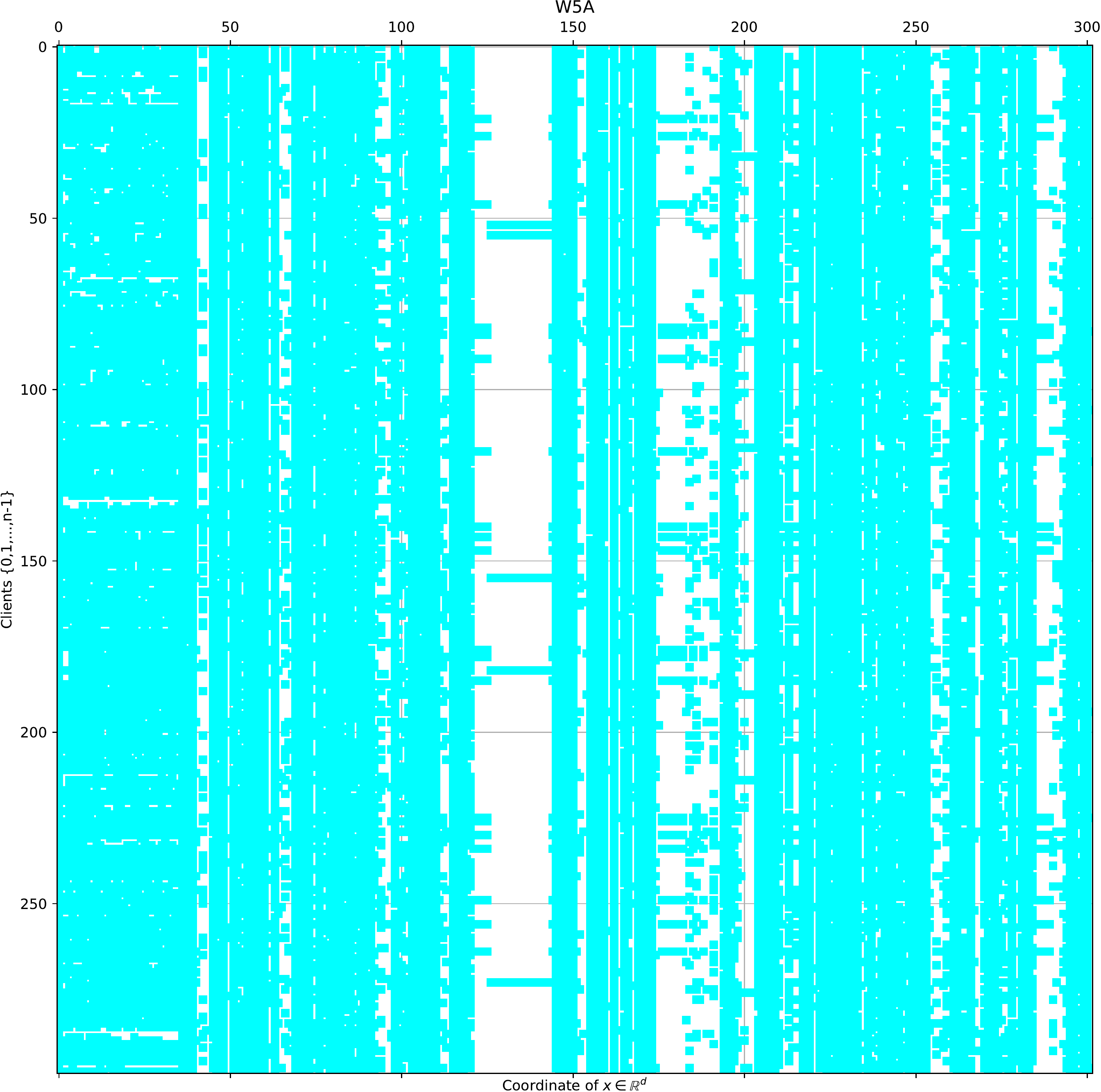} \caption{}
	\end{subfigure}

%	\begin{subfigure}[ht]{0.32\textwidth}
%		\includegraphics[width=\textwidth]{./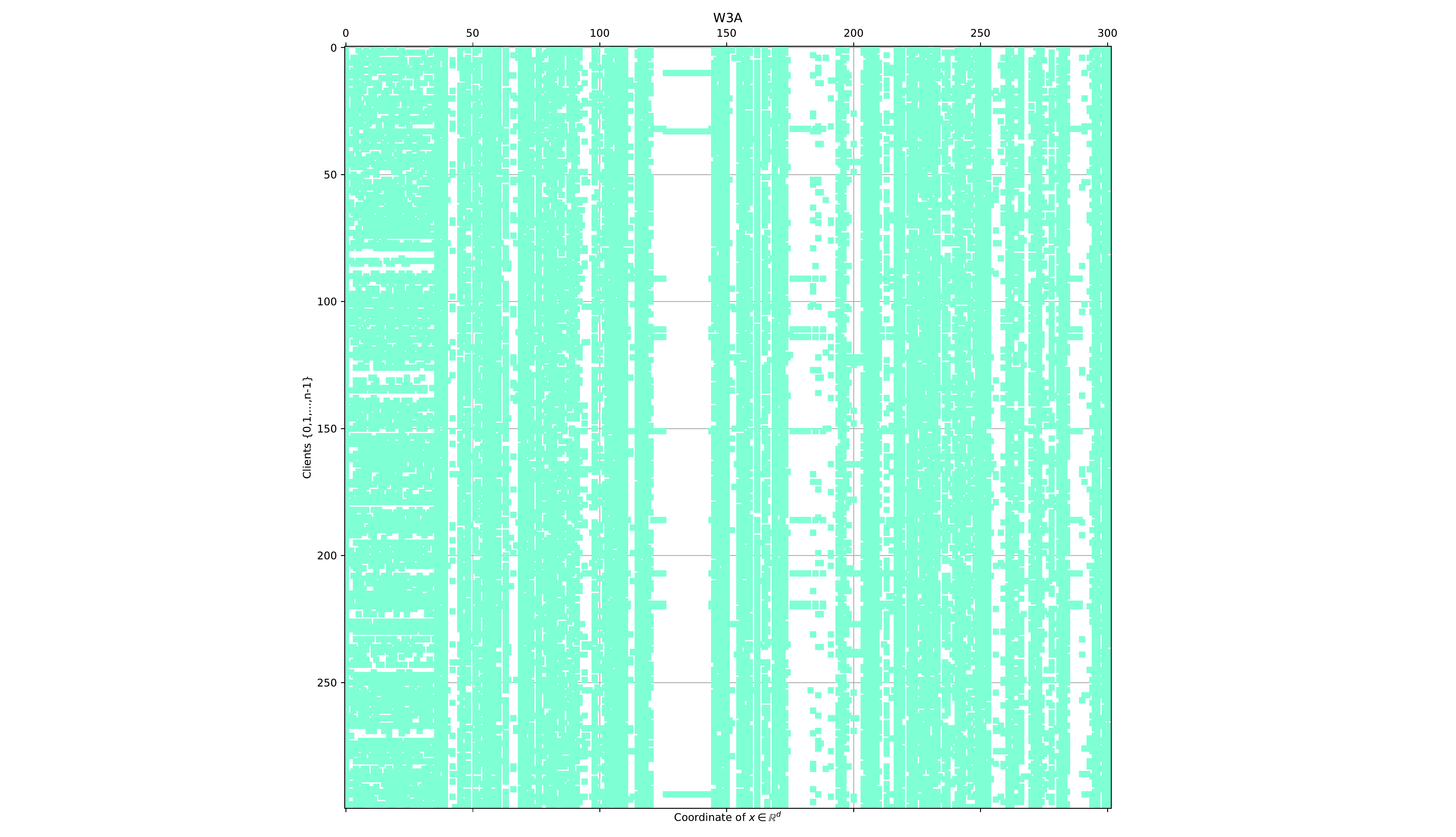} \caption{}
%	\end{subfigure}
%	\begin{subfigure}[ht]{0.45\textwidth}
%		\includegraphics[width=\textwidth]{./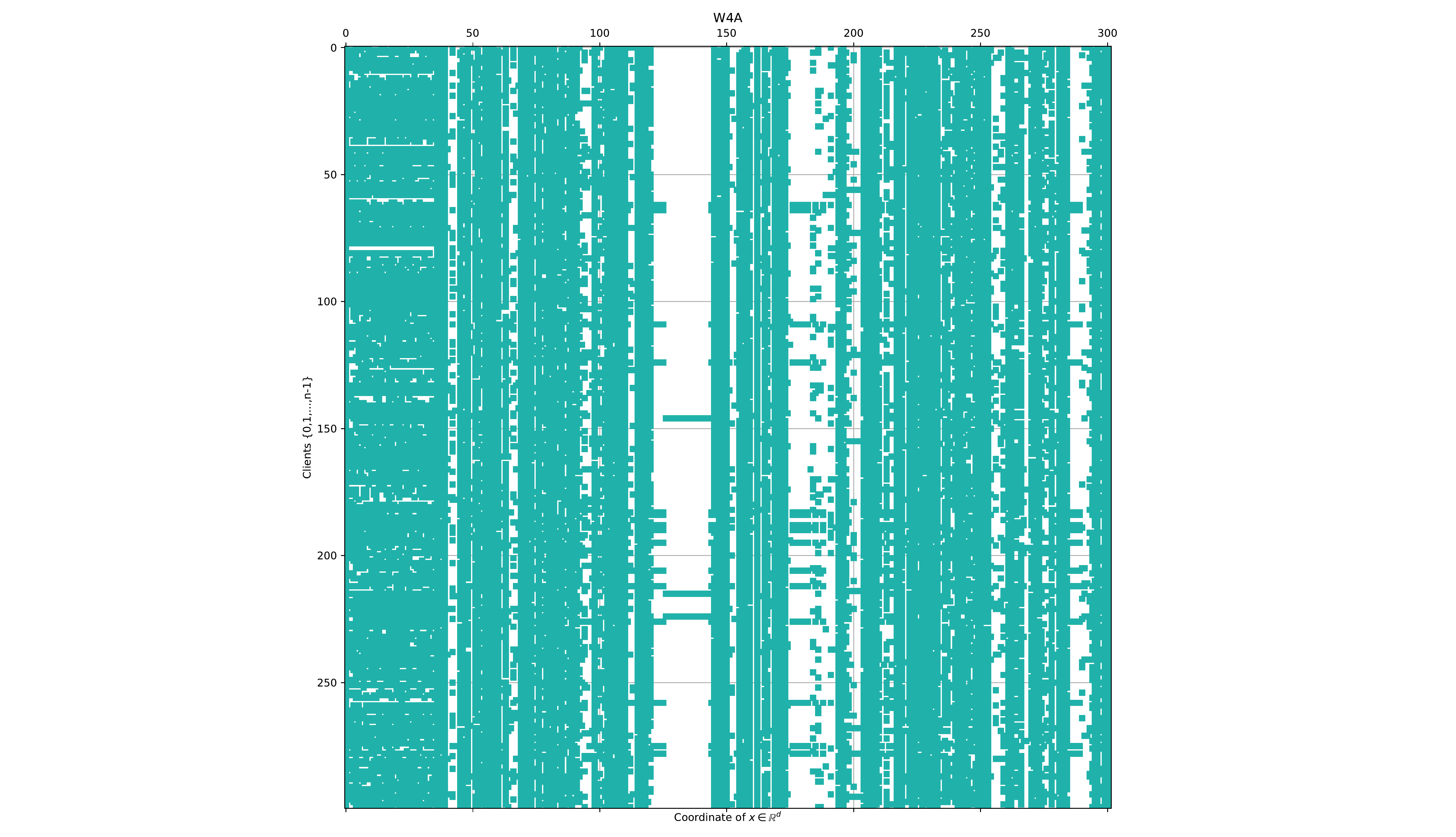} \caption{}
%	\end{subfigure}

	\caption{Sparsity patterns of datasets for training logistic regression model across $n=300$ clients. An empty cell indicates that a specific client does not have any data for a specific trainable scalar variable $x_i$. The plots show the set $([n] \times [d]) \textbackslash \mathcal{Z}'$ from Section \ref{sec:sparsity}.}
	\label{fig:c_patterns_libsvm_ds_fig}
\end{figure*}

\clearpage
%\newpage
\section{Additional Experiments}
\subsection{Linear regression on sparse data with non-convex regularization}
\label{app:noncvx}

\begin{figure*}[t]
	\centering
	\captionsetup[sub]{font=scriptsize,labelfont={}}	
	\captionsetup[subfigure]{labelformat=empty}
	%\captionsetup{position=top}
	
	\begin{subfigure}[ht]{0.32\textwidth}
		\includegraphics[width=\textwidth]{./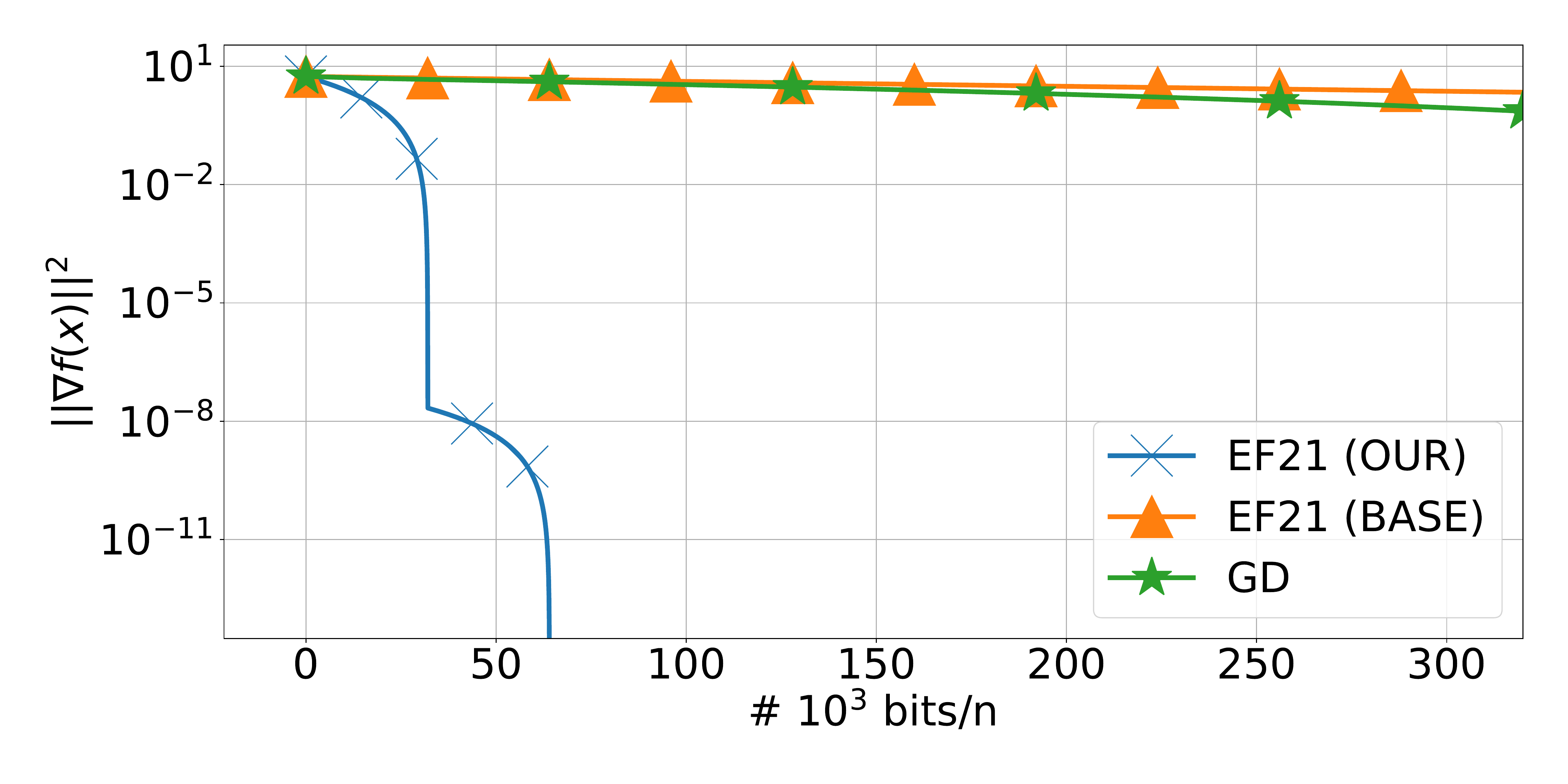} \caption{ $\xc/n=0.05$}
	\end{subfigure}
	\begin{subfigure}[ht]{0.32\textwidth}
		\includegraphics[width=\textwidth]{./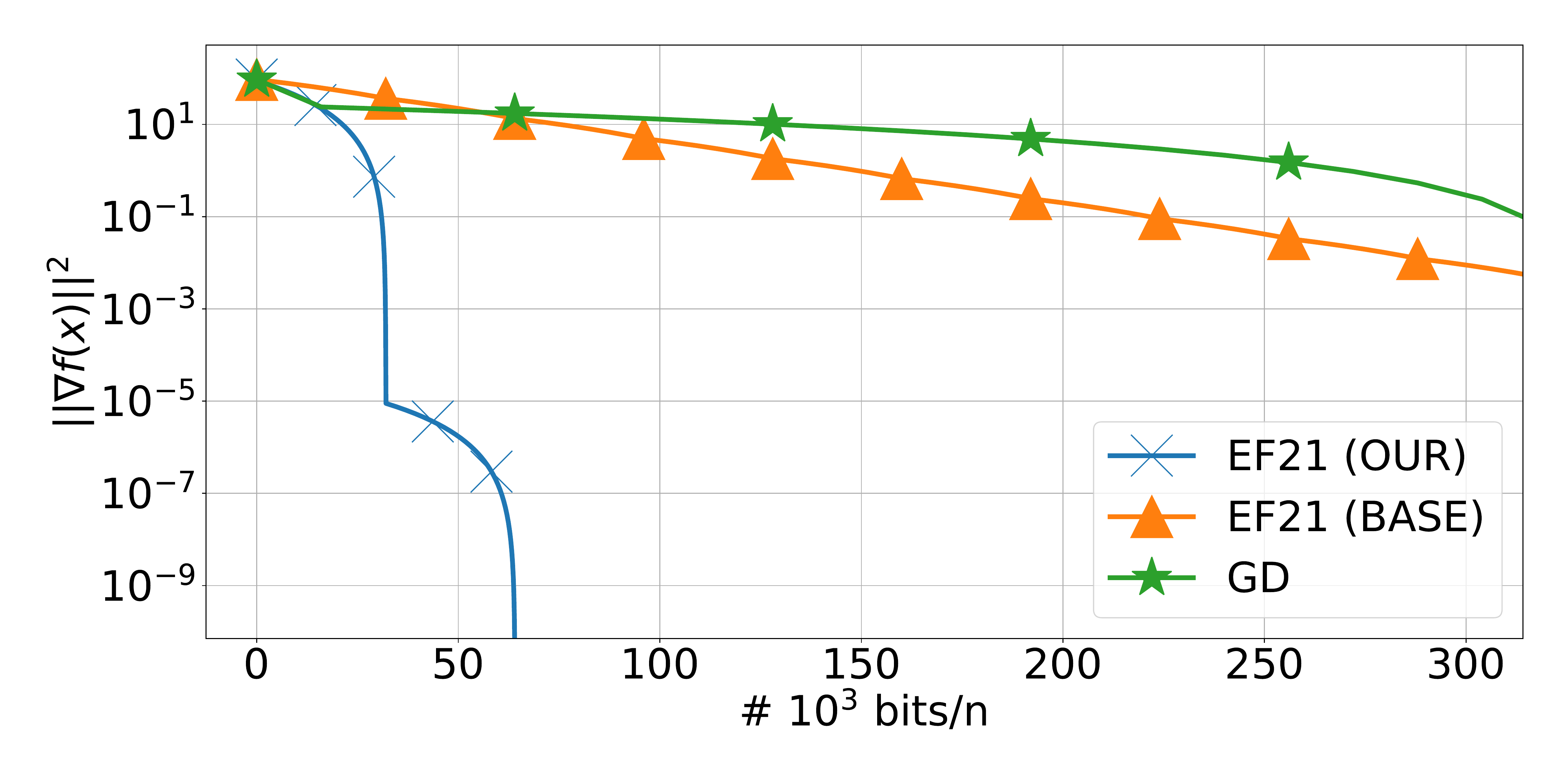} \caption{ $\xc/n=0.5$}
	\end{subfigure}
	\begin{subfigure}[ht]{0.32\textwidth}
		\includegraphics[width=\textwidth]{./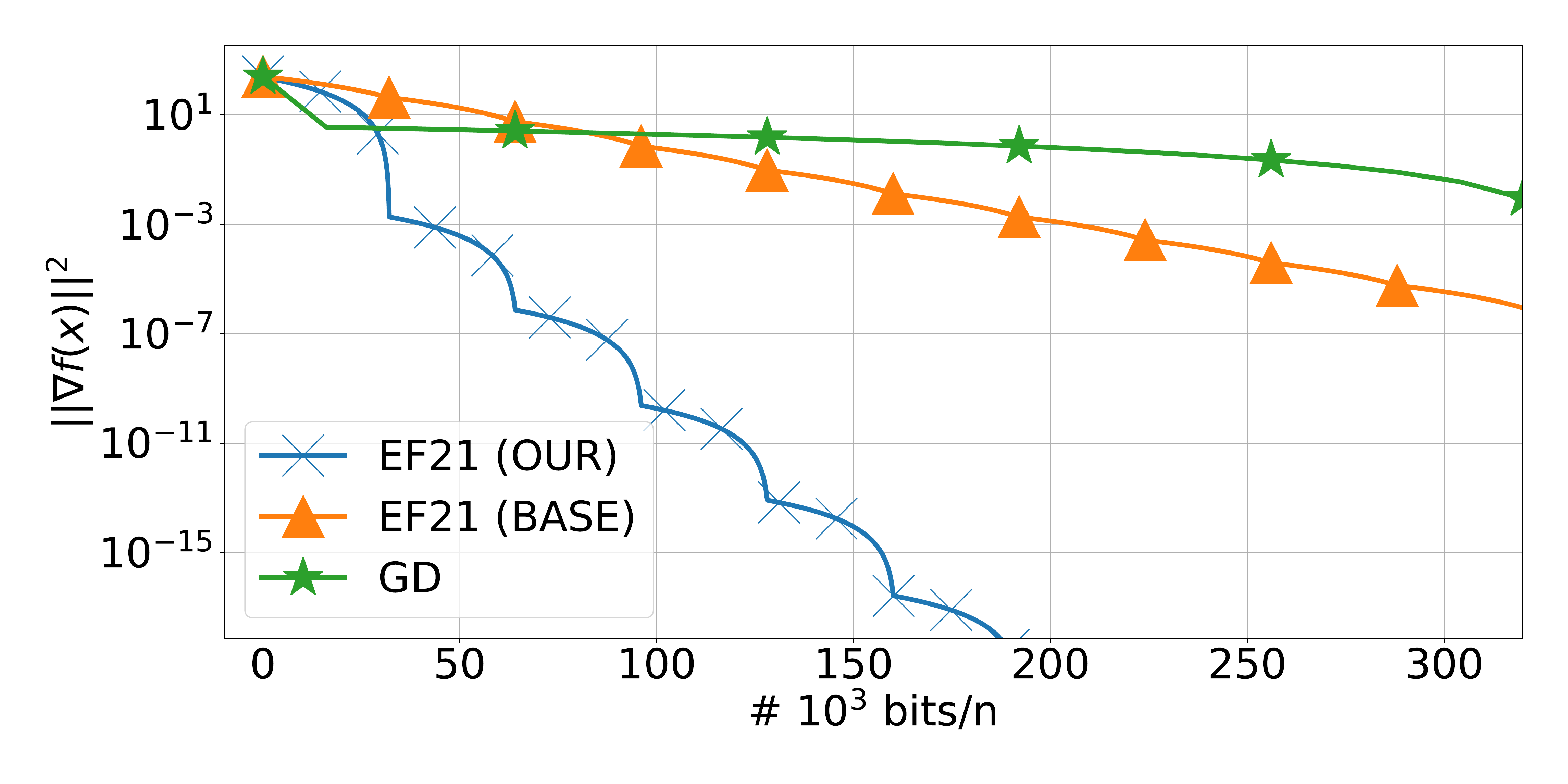} \caption{$\xc/n=0.9$}
	\end{subfigure}
	
	%\vspave{-1pt}
	\caption{Comparison of the performance of \algname{EF21}+\;\topk{1} with the standard approach proposed by \citet{EF21} and a newly proposed stepsize (see~\Cref{thm:convergence_separate}) on the linear regression problem and \algname{GD} for non convex case.  The sparsity pattern $\xc/n$ is changing in controlled way.}
	\label{fig:linear_regression_synthetic_non_cvx}
\end{figure*}

In this experiment, we consider the minimization of the function $f(x)$, which has the following form:
\begin{gather*}
	f(x) = \dfrac{1}{n} \sum_{i=1}^{n} (f_i(x) + \phi_i(x), \\
	f_i(x) = \dfrac{1}{n_i} \norm{\bA_i x - {b_i}}^2, \\
	 \phi_i(x) \eqdef \lambda \cdot \sum_{j=1}^{d} \dfrac{x_j^2}{x_j^2 + 1} \cdot I_{ij},\\
	 I_{ij} \eqdef \begin{cases}
	 	1, &\text{if } f_i(x) \text{ depends on } x_j,\\
	 	0, &\text{otherwise.}
					 \end{cases}
\end{gather*}
The term $\phi(x)$ is a non-convex regularizer, with eigenvalues in the set $\left[-\frac{1}{2}, 2\right]$. We use the dataset generation algorithm described in Section \ref{app:lin_reg_on_sparse}. It also explains the main meta-parameter that controls the generation of optimization problem instances such as $\xc/n$. After the generation process, the quadratic part depends only on a subset of variables $(x_1, \dots, x_d)$. Unlike in Section \ref{app:lin_reg_on_sparse}, we also restrict $\phi_i(x)$ to depend only on that subset of variables. This is why we include indicator variables $I_{ij}$ in the general formulation. We set the regularization coefficient $\alpha = 3 \cdot \max (L_{f_i})$ to make $f_i(x)$ non-convex smooth functions.

We present the results for varying $\xc/n$ in Figure \ref{fig:linear_regression_synthetic_non_cvx}. As we can see, $\xc/n$ plays a crucial role in the convergence of \algname{EF21}. Specifically, as $\xc/n$ goes to zero, our new analysis allows us to increase the step size of \algname{EF21}. As $\xc/n$ goes to one, the step size of \algname{EF21} becomes more similar to the standard one.
We use the meta-parameter $v=0.1$, which enforces the condition from Lemma \ref{lem:M-bound-via-c} and which is favorable for our algorithm and analysis in the context of Main Theorem \ref{thm:convergence_separate}.

\subsection{\algname{EF21}  versus \algname{GD} on convex objective}

\begin{figure*}[t]
	\centering
	\captionsetup[sub]{font=scriptsize,labelfont={}}	
	\captionsetup[subfigure]{labelformat=empty}
	%\captionsetup{position=top}
	
	\begin{subfigure}[ht]{0.32\textwidth}
		\includegraphics[width=\textwidth]{./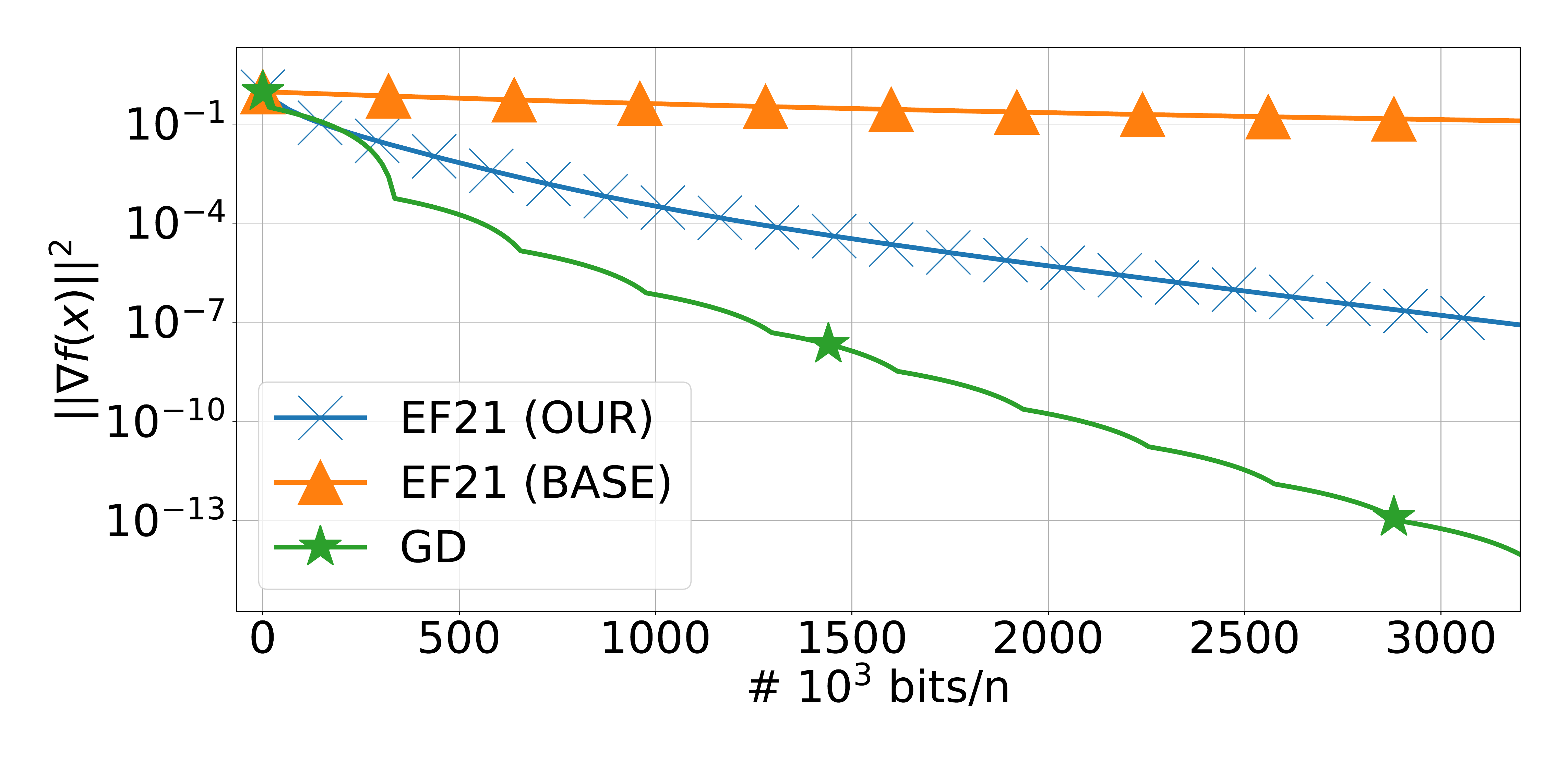} \caption{ $\xc/n=0.05$}
	\end{subfigure}
	\begin{subfigure}[ht]{0.32\textwidth}
		\includegraphics[width=\textwidth]{./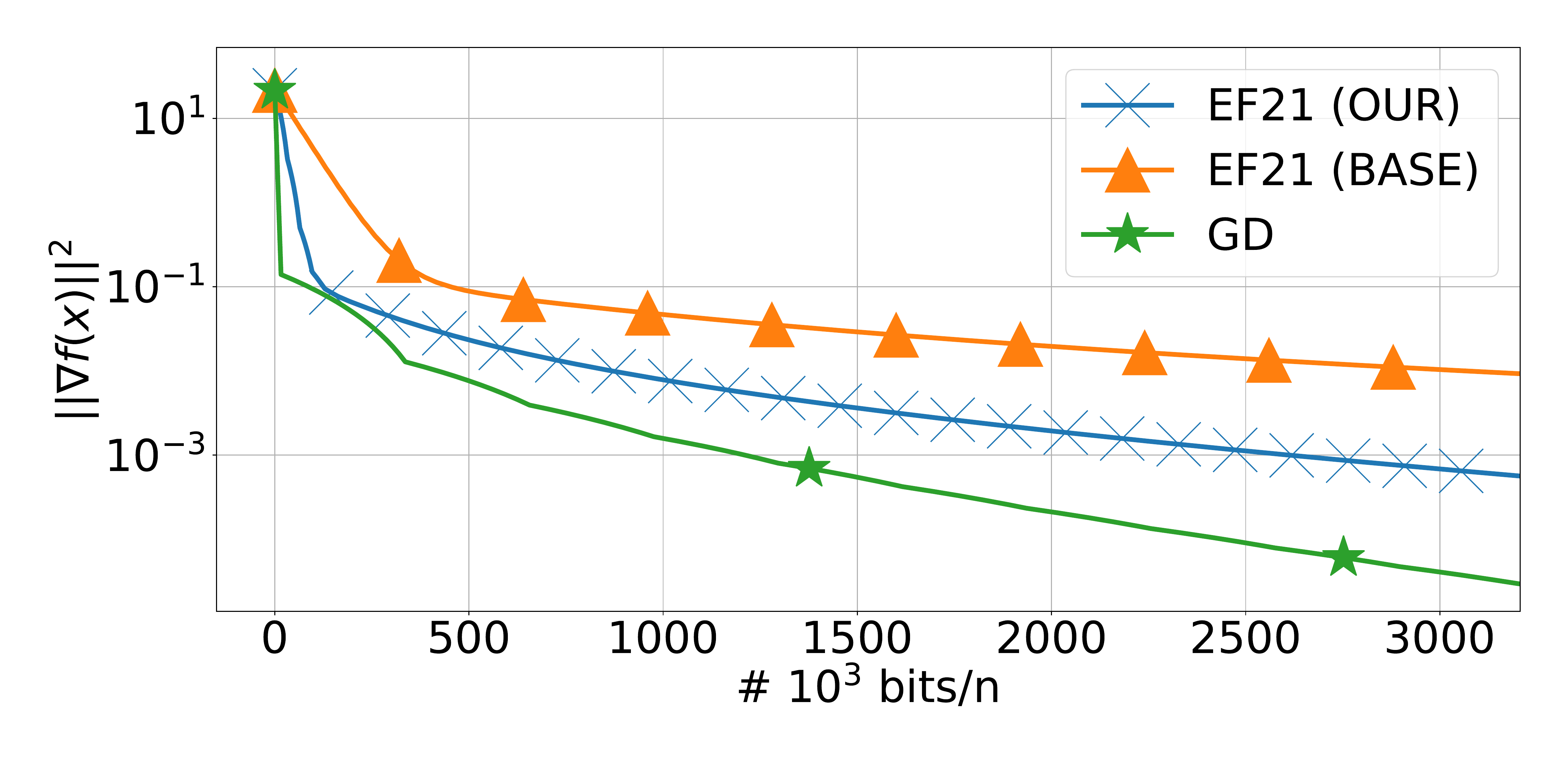} \caption{ $\xc/n=0.5$}
	\end{subfigure}
	\begin{subfigure}[ht]{0.32\textwidth}
		\includegraphics[width=\textwidth]{./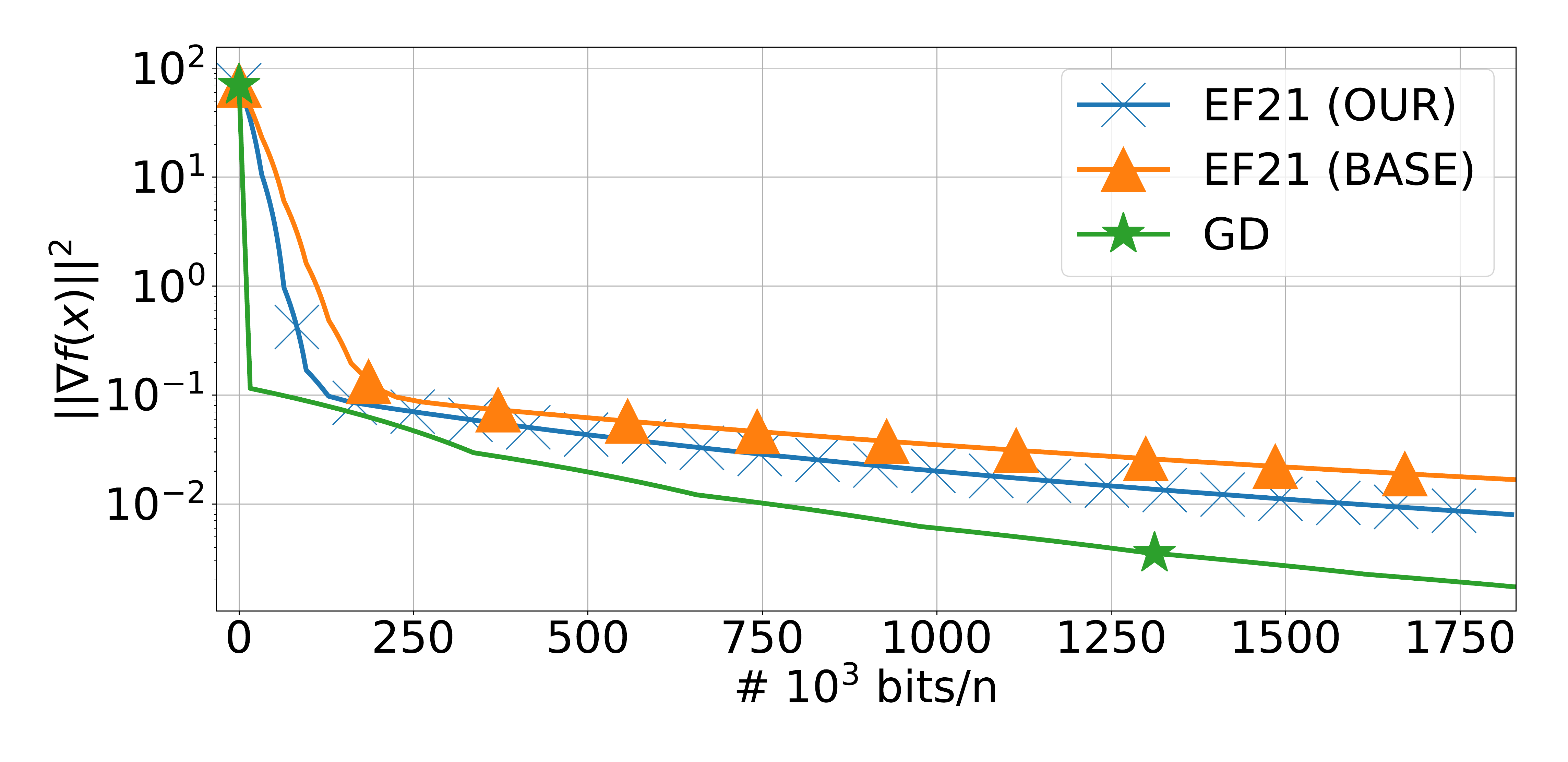} \caption{$\xc/n=0.9$}
	\end{subfigure}
		
	%\vspave{-1pt}
	\caption{Comparison of the performance of \algname{EF21}+\;\topk{1} with the standard approach proposed by \citet{EF21} and \algname{GD} on the linear regression problem.  The optimization objective  is convex. The sparsity pattern $\xc$ is controlled by manipulating the sparsity of the data. }
	\label{fig:linear_regression_synthetic_with_gd}
\end{figure*}

In Section \ref{sec:exp_lin_reg_with sparse_data} we have observed that we gain improvements for \algname{EF21} with our analysis compared to standard analysis. In this section, we add a comparison of these algorithms in addition to \algname{GD}. For all algorithms, we have used theoretical step size. Results presented in Figure \ref{fig:linear_regression_synthetic_with_gd}.

As we see \algname{GD} behaves better compared to the classical analysis of \algname{EF21} and our proposed analysis if do not tune step sizes. An important caveat is that \algname{EF21} is an algorithm for non-convex optimization and this setting is convex. In conclusion, we see to apply \algname{EF21} in a convex setting with theoretical step sizes there is room for future research.

% k=\mathcal{O}(1/\varepsilon^2)$ attains $\norm{\nabla f(x^k)}^2 \le \varepsilon^2$, but for convex smooth case we have that after $k\ge \frac{1}{2 \gamma \varepsilon}$ the \algname{GD} with fixed step size attains $\dfrac{1}{2L} \|\nabla f(x^k)\|^2 \le f(x^k) - f^* \le \varepsilon \|x^0 - x^*\|^2$. So convergence in $\|\nabla f(x^k)\|^2 = \mathcal{O}(1/k)$. 

%%%%%%%%%%%%%%%%%%%%
%%%%%%%%%%%%%%%%%%%%
%%%%%%%%%%%%%%%%%%%%
\section{Limitations} 
%%%%%%%%%%%%%%%%%%%%
%%%%%%%%%%%%%%%%%%%%
%%%%%%%%%%%%%%%%%%%%

We acknowledge that  the practical applicability of our results is limited because, firstly, many real-world datasets are not sparse enough to enjoy a significantly small $\xc/n$ ratio, and secondly, the complex architecture of deep neural networks already has non-zero weights on the second layer, independent of the initial dataset's sparsity, further increasing the value of $\xc$.

\end{document}